\DeclareMathOperator{\Ad}{Ad}
\DeclareMathOperator{\Span}{span}
\newlength\tindent
\newtheorem{theorem}{Theorem}[section]
\newtheorem{lemma}[theorem]{Lemma}
\newtheorem{proposition}[theorem]{Proposition}
\newtheorem{corollary}[theorem]{Corollary}
\theoremstyle{definition}
\newtheorem{definition}[theorem]{Definition}
\theoremstyle{remark}
\newtheorem{remark}[theorem]{Remark}
\numberwithin{equation}{section}
\begin{document}
\setcounter{page}{1}

\title[Riemannian geometry of $G_2$-type real flag manifolds]{Riemannian geometry of $G_2$-type real flag manifolds}

\author[B. Grajales]{Brian Grajales}
\address{Brian Grajales \endgraf
IMECC-Unicamp, Departamento de Matemática, Rua Sérgio Buarque de Holanda
651, Cidade Universitária Zeferino Vaz. 13083-859, Campinas - SP, Brazil
\endgraf
  {\it E-mail address} {\rm grajales@ime.unicamp.br}
  }

\author[G. Rondón]{Gabriel Rondón}
\address{Gabriel Rondón \endgraf 
S\~{a}o Paulo State University (Unesp), Institute of Biosciences, Humanities and
	Exact Sciences. Rua C. Colombo, 2265, CEP 15054--000. S. J. Rio Preto, S\~ao Paulo,
	Brazil}
\email{gabriel.rondon@unesp.br}

\author[J. Saavedra]{Julieth Saavedra}
\address{Julieth Saavedra \endgraf
	Centro de Ciências - Departamento de Matemática, Av. Humberto Monte, s/n, Campus do Pici - Bloco 914. 60455-760, Fortaleza, Ceará, Brazil}
\email{julieth.p.saavedra@gmail.com}


     \keywords{Real flag manifold, homogeneous metrics, split real form, Ricci flow}
     \subjclass[2020]{22F30, 53C30}

\begin{abstract} In this paper, we investigate homogeneous Riemannian geometry on real flag manifolds of the split real form of $\mathfrak{g}_2$. We characterize the metrics that are invariant under the action of a maximal compact subgroup of $G_2$. Our exploration encompasses the analysis of g.o. metrics and equigeodesics on the $\mathfrak{g}_2$-type flag manifolds. Additionally, we explore the Ricci flow for the case where the isotropy representation has no equivalent summands, employing techniques from the qualitative theory of dynamical systems.        
\end{abstract} 
\maketitle

\tableofcontents
\allowdisplaybreaks

\section{Introduction}
In the context of homogeneous manifolds, the study of real flag manifolds stands out as a fascinating pursuit, engaging in the intricacies of geometric structures and their underlying symmetries. A real flag manifold is a quotient space $\mathbb{F}=G/P,$ where $G$ is a connected Lie group with non-compact real simple associated Lie algebra $\mathfrak{g},$ and $P$ is a parabolic subgroup of $G.$ In this paper, we consider the case where $\mathfrak{g}$ is the split real form of $\mathfrak{g}_2.$ We present an exploration of the homogeneous Riemannian geometry in these manifolds, focusing particularly on homogeneous geodesics and the Ricci flow of invariant metrics.\\

For a real flag manifold $\mathbb{F}=G/P,$ we have that any maximal compact subgroup $K\subseteq G$ acts transitively on $\mathbb{F}$ with isotropy subgroup $K\cap P.$ This leads to an alternative presentation of $\mathbb{F},$ namely, $\mathbb{F}=K/(K\cap P).$ This presentation yields the isotropy representation of $K\cap P$ on the tangent space $T_o\mathbb{F}$ at the point $o:=e(K\cap P),$ where $e$ denotes the identity element of $K.$ The compactness of $K$ ensures the complete decomposition of this representation into irreducible subrepresentations. The understanding of these subrepresentations and their relations is key to describing $K$-invariant tensor fields on $\mathbb{F}.$ A detailed study of the isotropy representation of a real flag manifold was developed in \cite{PSM}. Several authors have contributed to the study of geometry and topology of real flag manifolds (see, for instance, \cite{SGE,GG,GGN,Mats,Varea}). In particular, we point out \cite{GGN}, where the authors described the invariant metrics on flag manifolds of a split real form of a classical Lie algebra ($A,$ $B,$ $C,$ or $D$) and used this description to characterize those invariant metrics for which every geodesic through the origin $o$ is a homogeneous geodesic. This means that every geodesic is the orbit of a one-parameter subgroup of $G.$ An invariant metric with that property is called a g.o. metric (geodesic orbit metric), and the corresponding Riemannian homogeneous space is called a g.o. space. This class of homogeneous spaces includes simply connected symmetric spaces and naturally reductive homogeneous spaces. While a complete classification of g.o. spaces is far from being accomplished, there exists a substantial body of literature on this matter; we refer to \cite{aa,CNN,CZZ,N} for instance.\\

The primary objective of this paper is to continue the aforementioned work in \cite{GGN} by providing a detailed description of the invariant metrics on real flag manifolds associated with the exceptional Lie algebra $\mathfrak{g}_2$ and classifying the g.o. metrics among them (Theorem \ref{g.o.metrics:theorem}). However, our scope extends beyond this; we also aim to characterize homogeneous curves (orbits of a one-parameter subgroup) that are geodesics for every invariant metric (Theorem \ref{equigeodesic:theorem}). These special curves are known as equigeodesics. Recent works on the classification and properties of these curves include \cite{CGN,GGEquigeodesics,Stat}. Here, we mainly use the results provided in \cite{GGEquigeodesics} to obtain the equigeodesics on real flag manifolds of $\mathfrak{g}_2.$\\

As a final contribution, we explore the dynamics of the system associated with the homogeneous Ricci flow in the case where all the irreducible subrepresentations of the isotropy representation have multiplicity one. The Ricci flow is currently one of the most studied topics in Differential Geometry. For a differentiable manifold $M,$ it is defined as the nonlinear evolution equation 
\begin{equation}\label{Ricci:flow:0}
    \dfrac{\partial g}{\partial t}=-2\textnormal{Ric}(g),
\end{equation}
where $t\mapsto g(t)$ is a one-parameter family of Riemannian metrics on $M$ and $\textnormal{Ric}(g)$ is the Ricci tensor associated with $g.$ Hamilton introduced it in \cite{Hamilton}, gaining significance due to its implications for understanding the geometric and topological structure of Riemannian manifolds. In the case of a homogeneous space, the Ricci tensor is constant on $M,$ and each solution $g$ of \eqref{Ricci:flow:0} is a curve on the set of invariant metrics, provided the initial condition $g(0)$ is invariant. Consequently, the equation \eqref{Ricci:flow:0} transforms into an ordinary differential equation known as the homogeneous Ricci flow. While this makes it somewhat more manageable, it remains far from straightforward. Exploring the Ricci flow on homogeneous spaces involves tools from the theory of dynamical systems. This approach has been adopted by various works covering different classes of homogeneous spaces, including generalized Wallach spaces \cite{AANS,StathaRicci}, Stiefel manifolds \cite{StathaRicci}, and complex flag manifolds \cite{GM,GMPSS,StathaRicci}.\\

Notably, in \cite{GM}, the authors employed the Poincaré compactification method \cite{10.2307/2001320} for the first time to study the global behavior of the homogeneous Ricci flow on $\frac{\textnormal{SO}(2n+1)}{\textnormal{U}(m)\times\textnormal{SO}(2k+1)}$ and $\frac{\textnormal{Sp}(n)}{\textnormal{U}(m)\times\textnormal{Sp}(k)}.$ This tool has proven to be very useful, and we will utilize it Section \ref{sec_2} for the analysis of the global dynamics. Our approach for the study of the homogenous Ricci flow consists of a local study and a global one. Concerning the local study, we calculate the invariant algebraic surfaces of degree 2 and since said system is very degenerate we use the Blow-up method \cite{DurRou1996} to understand the local dynamics of the system around $z-$axis, which consists of changing an equilibrium point, whose Jacobian matrix has eigenvalues with zero real part, for a sphere $\mathbb{S}^2\subset\mathbb{R}^3$, leaving the dynamics far from this point without changes.
For the analysis of global dynamics, as previously mentioned, we employ the classical Poincaré compactification method. This method involves identifying $\mathbb{R}^3$ with the interior of the unit sphere $\mathbb{S}^2$ and $\mathbb{S}^2$ with the infinity of $\mathbb{R}^3$. Subsequently, the polynomial differential system defined in $\mathbb{R}^3$ is analytically extended to the entire sphere. Consequently, we can examine the dynamics of polynomial differential systems in the vicinity of infinity.\\


The paper is organized in the following form. In Section \ref{sec:preliminares}, we present some basic results on compact homogeneous spaces, the non-compact Lie algebra $\mathfrak{g}_2$ and real flag manifolds of $\mathfrak{g}_2$. In particular, we present a description of the isotropy representation for each flag manifold. In Section \ref{sec:hom_geo}, we describe the g.o. metrics  and the equigeodesics on flag manifolds of $\mathfrak{g}_2.$ Finally, in Section \ref{sec:Ricci}, we perform an analysis on the homogeneous Ricci flow for the flag manifold whose isotropy representation has no equivalent irreducible subrepresentations.

\section{Preliminaries}\label{sec:preliminares}
\subsection{Compact homogeneous spaces}\label{section:2.1} Let $G$ be a compact Lie group, $H$ a closed subgroup of $G,$ and consider the homogeneous space $M=G/H.$ There is a natural smooth transitive action of $G$ on $G/H$ given by \begin{equation*}\label{action}\phi:G\times G/H\longrightarrow G/H;\ \phi(a,bH)=abH.\end{equation*} For each $a\in G,$ we can define the map $$\phi_a:=\phi(a,\cdot):G/H\ni bH\mapsto abH\in G/H.$$ A Riemannian metric $g$ on $G/H$ is called $G$-invariant (or $G$-homogeneous) if $$\{\phi_a:G/H\rightarrow G/H\ |\ a\in G\}\subseteq \textnormal{Iso}(G/H,g),$$ where $\textnormal{Iso}(G/H,g)$ denotes the group of all bijective isometries from $G/H$ to itself. \\

Let $\mathfrak{g}$ and $\mathfrak{h}$ be the Lie algebras associated with $G$ and $H$ respectively. Consider the adjoint representation $\Ad:G\rightarrow \textnormal{GL}(\mathfrak{g})$ of $G.$ Since $G$ is compact, there exists a unique (up to re-scaling) $\Ad(G)$-invariant inner product $(\cdot,\cdot)$ on $G.$ This means that $$(\Ad(a)X,\Ad(a)Y)=(X,Y),\ a\in G,\ X,Y\in\mathfrak{g}.$$ By fixing this inner product, we obtain a reductive orthogonal decomposition of the Lie algebra $\mathfrak{g}$ as follows: if $\mathfrak{m}$ is the orthogonal complement of $\mathfrak{h}$ in $\mathfrak{g}$ with respect to $(\cdot,\cdot),$ then  $$\mathfrak{g}=\mathfrak{h}\oplus\mathfrak{m},\ \textnormal{and}\ \Ad(h)\mathfrak{m}=\mathfrak{m},\ \forall h\in H.$$ This allows us to define the representation \begin{equation}\label{adjoint:representation}
\Ad^H\big{|}_{\mathfrak{m}}:H\rightarrow\textnormal{GL}(\mathfrak{m});\ \Ad^H\big{|}_{\mathfrak{m}}(h):=\Ad(h)\big{|}_{\mathfrak{m}},\ \forall h\in H,
\end{equation} which is equivalent to the isotropy representation of $G/H$ at the left coset $eH$ of the identity element $e\in G.$ By compactness of $H$ and the fact that $(\cdot,\cdot)$ is $\Ad(G)$-invariant, we have that this representation is completely reducible into pairwise $(\cdot,\cdot)$-orthogonal irreducible $H$-submodules, that is, \begin{equation}\label{submodules}\mathfrak{m}=\mathfrak{m}_1\oplus\cdots\oplus\mathfrak{m}_s,\end{equation} where $\Ad(h)\mathfrak{m}_j=\mathfrak{m}_j,\ \forall h\in H,$ and the representation $$\Ad^H\big{|}_{\mathfrak{m}_j}:H\rightarrow\textnormal{GL}(\mathfrak{m}_j);\ \Ad^H\big{|}_{\mathfrak{m}_j}(h):=\Ad(h)\big{|}_{\mathfrak{m}_j}$$ is irreducible for each $j\in\{1,...,s\}.$ The $H$-submodules $\mathfrak{m}_1,...,\mathfrak{m}_s$ are called the isotropy summands of the representation \eqref{adjoint:representation}.  Two isotropy summands $\mathfrak{m}_i$ and $\mathfrak{m}_j$ are equivalent if the representations $\Ad^H\big{|}_{\mathfrak{m}_i}$ and $\Ad^H\big{|}_{\mathfrak{m}_j}$ are equivalent. An inner product $\left\langle\cdot,\cdot\right\rangle:\mathfrak{m}\times\mathfrak{m}\rightarrow\mathbb{R}$ is called $\Ad(H)$-invariant if it satisfies the equation $$\langle\Ad(h)X,\Ad(h)Y\rangle=\langle X,Y\rangle,\ h\in H,\ X,Y\in\mathfrak{m}.$$ There exists a bijection between the set of all Riemannian $G$-invariant metrics on $G/H$ and the set of all $\Ad(H)$-invariant inner products on $\mathfrak{m}.$ Since $(\cdot,\cdot)$ is $\Ad(G)$-invariant, then  $(\cdot,\cdot)\big{|}_{\mathfrak{m}\times\mathfrak{m}}:\mathfrak{m}\times\mathfrak{m}\rightarrow \mathbb{R}$ is $\Ad(H)$-invariant. Consequently, for any $\Ad(H)$-invariant inner product $\langle\cdot,\cdot\rangle,$ there exists a linear  operator $A:\mathfrak{m}\rightarrow \mathfrak{m}$ such that $$\langle X,Y\rangle=(AX,Y),\ \forall X,Y\in\mathfrak{m}.$$ 
The operator $A$ is referred to as the metric operator associated with $\langle\cdot,\cdot\rangle.$ As both $\langle\cdot,\cdot\rangle$ and $(\cdot,\cdot)\big{|}_{\mathfrak{m}\times\mathfrak{m}}$ are $\Ad(H)$-invariant inner products, the operator $A$ is positive definite, self-adjoint (with respect to $(\cdot,\cdot)\big{|}_{\mathfrak{m}\times\mathfrak{m}}$), and commutes with $\Ad(h)\big{|}_{\mathfrak{m}},$ for all $h\in H.$ Moreover, any linear operator $A$ that satisfies these properties corresponds to the metric operator associated with some $\Ad(H)$-invariant inner product on $\mathfrak{m}.$\\

Following the notations introduced in \cite{GGEquigeodesics}, let $\{T_i^j:i,j\in\{1,...,s\}\}$ be a family of linear maps $T_i^j:\mathfrak{m}_i\rightarrow\mathfrak{m}_j$ that satisfies the following properties:
\begin{itemize}
    \item[$i)$] $T_i^i=\textnormal{I}_{\mathfrak{m}_i},\ i=1,...,s.$\\
    
    \item[$ii)$] $T_i^j=0$ whenever $\mathfrak{m}_i$ is not equivalent to $\mathfrak{m}_j.$\\

    \item[$iii)$] If $\mathfrak{m}_i$ is equivalent to $\mathfrak{m}_j,$ then $T_i^j:\mathfrak{m}_i\rightarrow\mathfrak{m}_j$ is an equivariant map such that $$(T_i^j(X),T_i^j(Y))=(X,Y).$$

    \item[$iv)$] If $\mathfrak{m}_i$ is equivalent to $\mathfrak{m}_j,$ then $(T_i^j)^{-1}=T_j^i.$
\end{itemize}

\

There exist $(\cdot,\cdot)\big{|}_{\mathfrak{m}\times\mathfrak{m}}$-orthonormal sets $\mathcal{B}_1,...,\mathcal{B}_s$ such that for each $j\in\{1,...,s\},$ $\mathcal{B}_j$ is a basis of $\mathfrak{m}_j,$ and $T_i^j(\mathcal{B}_i)=\mathcal{B}_j$ whenever $\mathfrak{m}_i$ is equivalent to $\mathfrak{m}_j.$ The set $\mathcal{B}=\mathcal{B}_1\cup\cdots\cup\mathcal{B}_s$ is then a basis of $\mathfrak{m}.$ Any metric operator $A$ associated with a $G$-invariant metric on $G/H$ can be represented in such a basis by a matrix of the form
\begin{equation}\label{positive:matrix}
        \left[A\right]_{\mathcal{B}}=\left(\begin{array}{ccccc}\mu_1\textnormal{I}_{d_1} & A_{21}^T & A_{31}^T & \cdots & A_{s1}^T \\
        A_{21} & \mu_2\textnormal{I}_{d_2} & A_{32}^T & \cdots & A_{s2}^T\\
        A_{31} & A_{32} & \mu_3\textnormal{I}_{d_3} & \cdots & A_{s3}\\
        \vdots & \vdots & \vdots & \ddots & \vdots\\
        A_{s1} & A_{s2} & A_{s3} & \cdots &\mu_{s}\textnormal{I}_{d_s}
        \end{array}\right),
    \end{equation}
where $\mu_1,...,\mu_s>0,$ $d_i=\dim\mathfrak{m}_i,\ i=1,...,s,$ and $A_{ij}$ defines an equivariant map from $\mathfrak{m}_j$ to $\mathfrak{m}_i$ for $1\leq j<i\leq s$ (in particular, $A_{ij}=0$ whenever $\mathfrak{m}_i$ is not equivalent to $\mathfrak{m}_j$). Conversely, the formula \eqref{positive:matrix} defines a metric operator corresponding with some $G$
-invariant metric provided that the matrix is positive definite.

\subsection{The non-compact Lie algebra $\mathfrak{g}_2$} In this section, we present a construction of the split real form of the Lie algebra $\mathfrak{g}_2$. This construction can also be found in \cite{BM} or \cite[Section 8.4.1]{SM}.\\

Let $\mathfrak{sl}(3)$ represent the Lie algebra of real, traceless $3\times 3$ matrices, and $\mathbb{R}^3$ denote the 3-dimensional Euclidean space. The canonical basis of $\mathbb{R}^3$ is given by $\{e_1, e_2, e_3\}$ and its corresponding dual basis is denoted as $\{\epsilon_1, \epsilon_2, \epsilon_3\}$. We define $\bigwedge^3(\mathbb{R}^3)$ as the vector space consisting of all 3-covectors in $\mathbb{R}^3$. This vector space is isomorphic to $\mathbb{R}$ via the mapping $\Psi$ defined as follows:
$$\Psi:c\in\mathbb{R}\mapsto c\nu\in\bigwedge\nolimits^3(\mathbb{R}^3),\ \textnormal{where}\ \nu:=e_1\wedge e_2\wedge e_3.$$
Let us introduce the linear isomorphisms 
$$T:\bigwedge\nolimits^2(\mathbb{R}^3)\longrightarrow (\mathbb{R}^3)^*,\ T(u\wedge v)(w):=\Psi^{-1}(u\wedge v\wedge w) $$
and 
$$S:\bigwedge\nolimits^2(\mathbb{R}^3)^*\longrightarrow\mathbb{R}^3,$$
defined implicitly by the formula $$\alpha\wedge\beta\wedge\gamma=\gamma(S(\alpha\wedge\beta))\nu^*,\ \textnormal{where}\ \nu^*:=\epsilon_1\wedge\epsilon_2\wedge\epsilon_3,\ \textnormal{and}\  \alpha,\beta,\gamma\in(\mathbb{R}^3)^*.$$
Consider the vector space $$\mathfrak{g}:=\mathfrak{sl}(3)\oplus \mathbb{R}^3\oplus \left(\mathbb{R}^3\right)^*$$ endowed with the Lie bracket $\left[\cdot,\cdot\right]$  defined by the following relations:\\

\begin{itemize}
	\item[$i)$] The Lie bracket of two matrices $X,Y\in\mathfrak{sl}(3)$ is given by $$[X,Y]:=XY-YX\in\mathfrak{sl}(3).$$
	\item[$ii)$] If $X\in\mathfrak{sl}(3)$ and $v\in\mathbb{R}^3$ then 
	$$[X,v]:=Xv\in\mathbb{R}^3.$$
	\item[$iii)$] If $X\in\mathfrak{sl}(3)$ and $\alpha\in\left(\mathbb{R}^3\right)^*$ then
	$$[X,\alpha]:=-\alpha\circ X\in\left(\mathbb{R}^3\right)^*.$$
	\item[$iv)$] If $u,v\in\mathbb{R}^3$ then $$[u,v]:=-\frac{4}{3}T(u\wedge v)\in\left(\mathbb{R}^3\right)^*.$$
	\item[$v)$]] If $\alpha,\beta\in\left(\mathbb{R}^3\right)^*$ then
	$$[\alpha,\beta]:=\frac{4}{3}S(\alpha\wedge\beta)\in\mathbb{R}^3.$$
	\item[$vi)$] If $v=\sum\limits_{i=1}^3v^ie_i\in\mathbb{R}^3$ and $\alpha=\sum\limits_{j=1}^3\alpha^j\epsilon_j\in\left(\mathbb{R}^3\right)^*$ then
	$$[v,\alpha]:=(v^i\alpha^j)_{3\times 3}-\frac{1}{3}\alpha(v)\textnormal{I}_3\in\mathfrak{sl}(3),$$
	where $\textnormal{I}_3$ is the identity matrix of order 3.\\
 
	\item[$vii)$] For generic elements $X=X_1+u_1+\alpha_1,\ Y=X_2+u_2+\alpha_2\in\mathfrak{g},$ $X_j\in\mathfrak{sl}(3),\ u_j\in\mathbb{R}^3,\ \alpha_j\in\left(\mathbb{R}^3\right)^*,\ j=1,2;$ define the Lie bracket $[X,Y]$ by extending the rules $i)- vi)$ so that $[\cdot,\cdot]$ is bilinear and skew-symmetric.
\end{itemize}

 \
 
 The pair $\left(\mathfrak{g},\left[\cdot,\cdot\right]\right)$ is a non-compact Lie algebra which is isomorphic to the split real form of the complex simple Lie exceptional Lie algebra $\mathfrak{g}_2.$ For simplicity, we shall also denote this Lie algebra as $\mathfrak{g}_2.$\\
 
The set $\mathfrak{h},$ consisting of all diagonal and traceless $3\times 3$ matrices,  is a Cartan subalgebra $\mathfrak{g}_2.$ The corresponding root system is given by $$\Pi=\left\{\lambda_i-\lambda_j:1\leq i\neq j\leq 3\right\}\cup\left\{\pm\lambda_i:1\leq i\leq 3\right\},$$
where each $\lambda_i$ is defined as $$\begin{array}{rccc}
\lambda_i:&\mathfrak{h} &\longrightarrow & \mathbb{R}\\
&\textnormal{diag}(a_1,a_2,a_3)&\longmapsto&a_i
\end{array},\ i=1,2,3.$$
A set of positive roots can be chosen as $$\Pi^+=\left\{\lambda_i-\lambda_j:1\leq i<j\leq 3\right\}\cup\left\{\lambda_1,\lambda_2,-\lambda_3\right\}.$$ The corresponding set of simple roots is $\Sigma=\{\alpha_1:=\lambda_1-\lambda_2,\alpha_2:=\lambda_2\}.$ Given $i,j\in\{1,2,3\}$, let $E_{ij}$ be the $3\times 3$ square matrix whose $(i, j)$-entry is equal to 1, and all the other entries are zero. Then, the root spaces associated with $\Pi$ are
\begin{align*}
&\left(\mathfrak{g}_2\right)_{\lambda_i-\lambda_j}=\textnormal{span}\left\{E_{ij}\right\},\ 1\leq i\neq j\leq 3\\
&\left(\mathfrak{g}_2\right)_{\lambda_i}=\textnormal{span}\left\{e_i\right\},\textnormal{ and}\\
&\left(\mathfrak{g}_2\right)_{-\lambda_i}=\textnormal{span}\left\{\epsilon_i\right\},\ i=1,2,3.
\end{align*}

\subsection{Real flag manifolds of $\mathfrak{g}_2$}\label{section:2.3} A real flag manifold of the Lie algebra $\mathfrak{g}_2$ is defined as the homogeneous space $\mathbb{F} = G_2/P$, where $G_2$ is a connected non-compact simple Lie group with Lie algebra $\mathfrak{g}_2$, and $P$ is a parabolic subgroup of $G_2.$ The non-trivial parabolic subgroups of $G_2$ correspond bijectively to proper subsets $\Theta$ of the set $\Sigma$ of simple roots associated with the Cartan subalgebra $\mathfrak{h}$. This correspondence is given as follows: for a given $\Theta \subsetneq \Sigma$, we denote by $\langle\Theta\rangle$ the set of all roots that are linear combinations with integer coefficients of elements in $\Theta$. Let $\langle\Theta\rangle^+ = \langle\Theta\rangle \cap \Pi^+$ and $\langle\Theta\rangle^- = \langle\Theta\rangle \cap (-\Pi^+)$. We then define 
$$\mathfrak{p}_\Theta:=\mathfrak{h}\oplus\bigoplus\limits_{\alpha\in\Pi}(\mathfrak{g}_2)_{\alpha}\oplus\bigoplus\limits_{\alpha\in\langle\Theta\rangle^-}(\mathfrak{g}_2)_\alpha,$$
and $$P_\Theta=\{a\in G_2:\textnormal{Ad}(a)\mathfrak{p}_\Theta=\mathfrak{p}_\Theta\}.$$ Then $P_\Theta$ is a parabolic subgroup of $G_2$ and every non-trivial parabolic subgroup of $G_2$ is isomorphic to $P_\Theta$ for some $\Theta\subsetneq \Sigma.$ Consequently, there exist precisely three real flag manifolds of $\mathfrak{g}_2$, corresponding to $\Theta=\emptyset, \ \Theta=\{\alpha_1\},$ and $\Theta=\{\alpha_2\}.$\\

For each $\Theta \subsetneq \Sigma$, the maximal compact subgroup $$K\cong \text{SO}(4)\cong (\text{SU}(2) \times \text{SU}(2))/\{\pm(\text{I}, \text{I})\}$$ of $G_2$ acts transitively on the flag $\mathbb{F}_\Theta = G_2/P_\Theta$. The isotropy subgroup of $eP_\Theta$  is $K_\Theta:= K \cap P_\Theta$. Therefore, a flag manifold $\mathbb{F}_\Theta = G_2/P_\Theta$ can also be represented as $K/K_\Theta$. The Lie algebra 
 $\mathfrak{k}$ of $K$ is isomorphic to $\mathfrak{so}(4)=\mathfrak{so}(3)\oplus\mathfrak{so}(3)$ (see, for instance, \cite{S}). It is spanned by the vectors
\begin{equation*}
X_1:=E_{21}-E_{12},\ X_2:=E_{31}-E_{13},\ X_3:=E_{32}-E_{23},\ Y_i:=e_i-\epsilon_i,\ i=1,2,3.
\end{equation*}
which satisfy the following relations:
\begin{equation}\label{brackets:0}
	\begin{array}{lll}
	[X_1,X_2]=X_3, & [X_2,X_3]=X_1, & [X_3,Y_3]=-Y_2,\\
	\\
	\textnormal{[}X_1,X_3\textnormal{]}=-X_2, & [X_2,Y_1]=Y_3, & [Y_1,Y_2]=X_1+\frac{4}{3}Y_3,\\
	\\
	\textnormal{[}X_1,Y_1\textnormal{]}=Y_2, & [X_2,Y_3]=-Y_1, & [Y_1,Y_3]=X_2-\frac{4}{3}Y_2,\\
	\\
	\textnormal{[}X_1,Y_2\textnormal{]}=-Y_1, & [X_3,Y_2]=Y_3, & [Y_2,Y_3]=X_3+\frac{4}{3}Y_1.\\
	\end{array}
\end{equation}

\

The Killing form $B$ of $\mathfrak{k}\cong\mathfrak{so}(3)\oplus\mathfrak{so}(3)$ is negative definite. Consequently, $(\cdot,\cdot) := -B$ defines an $\Ad(K)$-invariant inner product on $\mathfrak{k}.$ Due to the compactness of $K$, we have that for each $\Theta \subsetneq \Sigma$, the flag manifold $\mathbb{F}_\Theta = K/K_\Theta$ is reductive. This implies the existence of a decomposition $\mathfrak{k} = \mathfrak{k}_\Theta\oplus \mathfrak{m}_\Theta,$ where $\mathfrak{k}_\Theta$ is the Lie algebra of $K_\Theta$ and $\Ad(k)\mathfrak{m}_\Theta = \mathfrak{m}_\Theta$ for all $k \in K_\Theta$. The isotropy representation of $\mathbb{F}_\Theta$ is equivalent to the representation
\begin{equation}\label{isotropy:representation}\textnormal{Ad}^{K_\Theta}\big{|}_{\mathfrak{m}_\Theta}:K_\Theta\longrightarrow \textnormal{GL}(\mathfrak{m}_\Theta),\end{equation}
which is completely reducible, that is, $\mathfrak{m}_\Theta$ can be decomposed as a direct sum of irreducible (possibly equivalent) subrepresentations. The following proposition proved by Patr\~ao and San Martin in \cite{PSM} gives the description of these subrepresentations and their equivalences.
\begin{proposition}\label{isotropy:representation:g_2} Let $\mathbb{F}_\Theta$ be a real flag manifold of $\mathfrak{g}_2.$ Then the following statements hold:
	\begin{itemize}
		\item[$a)$] If $\Theta=\emptyset$, then $\mathfrak{k}_\emptyset=\{0\}$ and $\mathfrak{m}_{\emptyset}$ is the direct sum of six one-dimensional $K_\emptyset$-submodules given by $$\textnormal{span}\left\{X_i\right\},\textnormal{span}\left\{Y_i\right\},\ i=1,2,3.$$ All these submodules are irreducible and the equivalence classes among them are \begin{align*}
		&\{\textnormal{span}\left\{X_1\right\},\textnormal{span}\left\{Y_3\right\}\},\\ &\{\textnormal{span}\left\{X_2\right\},\textnormal{span}\left\{Y_2\right\}\},\\ &\{\textnormal{span}\left\{X_3\right\},\textnormal{span}\left\{Y_1\right\}\}.
		\end{align*}
		\item[$b)$] If $\Theta=\{\alpha_1\}$, then $\mathfrak{k}_{\{\alpha_1\}}=\Span\{X_1\}$ and $\mathfrak{m}_{\{\alpha_1\}}$ decomposes into the irreducible $K_{\{\alpha_1\}}$-submodules 
		\begin{align*}
		&\textnormal{span}\left\{Y_3\right\},\\ &\textnormal{span}\left\{X_2,X_3\right\},\\ &\textnormal{span}\left\{Y_2,Y_1\right\},
		\end{align*}
		where the two-dimensional submodules are equivalent and the map $$\begin{array}{rccc}
  T:&\Span\{X_2,X_3\}&\longrightarrow&\Span\{Y_2,Y_1\}\\
  & X_2 & \longmapsto & Y_2\\
  & X_3 & \longmapsto & -Y_1
  \end{array}$$ is an equivariant isomorphism. 
		\item[$c)$] If $\Theta=\{\alpha_2\}$, then $\mathfrak{k}_{\{\alpha_2\}}=\Span\{Y_2\}$ and $\mathfrak{m}_{\{\alpha_2\}}$ decomposes into the irreducible inequivalent $K_{\{\alpha_2\}}$-submodules
		\begin{align*}
		&\textnormal{span}\left\{X_2\right\},\\ &\textnormal{span}\left\{(\sqrt{13}-2)X_3+3Y_1,(\sqrt{13}-2)X_1+3Y_3\right\},\\
		&\textnormal{span}\left\{(\sqrt{13}+2)X_3-3Y_1,(\sqrt{13}+2)X_1-3Y_3\right\}.
		\end{align*} 
		\end{itemize}
\end{proposition}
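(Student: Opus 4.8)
The plan is to verify the three assertions directly by computing the isotropy subalgebra $\mathfrak{k}_\Theta$, fixing the reductive complement $\mathfrak{m}_\Theta = \mathfrak{k}_\Theta^{\perp}$ with respect to $(\cdot,\cdot) = -B$, and then analyzing the $\mathrm{ad}(\mathfrak{k}_\Theta)$-action on $\mathfrak{m}_\Theta$ using the structure constants in \eqref{brackets:0}. For each $\Theta$ I would proceed in the same order. First, identify $\mathfrak{k}_\Theta = \mathfrak{k}\cap\mathfrak{p}_\Theta$: since $\mathfrak{p}_\Theta$ contains $\mathfrak{h}$, all positive root spaces, and the negative root spaces in $\langle\Theta\rangle^-$, the intersection with $\mathfrak{k}$ is spanned by those $X_i$ or $Y_i$ whose "compact combination" $(\mathfrak{g}_2)_\alpha \oplus (\mathfrak{g}_2)_{-\alpha}$ has $\alpha\in\langle\Theta\rangle$; this gives $\mathfrak{k}_\emptyset=\{0\}$, $\mathfrak{k}_{\{\alpha_1\}}=\Span\{X_1\}$ (from $\alpha_1=\lambda_1-\lambda_2$ with root vectors $E_{12},E_{21}$), and $\mathfrak{k}_{\{\alpha_2\}}=\Span\{Y_2\}$ (from $\alpha_2=\lambda_2$ with root vectors $e_2,\epsilon_2$). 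Second, observe that the six vectors $X_1,X_2,X_3,Y_1,Y_2,Y_3$ form an orthogonal basis of $\mathfrak{k}$ for $-B$ (this follows from $\mathfrak{k}\cong\mathfrak{so}(3)\oplus\mathfrak{so}(3)$ and the form of the brackets), so $\mathfrak{m}_\Theta$ is just the span of the remaining basis vectors.

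For part $(a)$, $\mathfrak{m}_\emptyset=\mathfrak{k}$ and $K_\emptyset$ is finite (its Lie algebra is $\{0\}$), so each line $\Span\{X_i\}$, $\Span\{Y_i\}$ is automatically an irreducible submodule; the equivalences are detected by finding, for each claimed pair, an explicit nonzero equivariant map — equivalently, a vector in $\mathfrak{k}$ whose adjoint action (through the relevant finite group, realized infinitesimally via the Weyl group element swapping the two root lines) intertwines them. Here I would instead appeal to the general description in \cite{PSM}: the isotropy summands of $\mathbb{F}_\emptyset$ over $\mathbb{R}$ come in pairs $\Span\{X_\alpha^-\},\Span\{X_\beta^-\}$ exactly when $\alpha,\beta$ lie in the same orbit under the relevant symmetry, and the three pairs listed are precisely the pairs $\{\lambda_i-\lambda_j\}\leftrightarrow\{\pm\lambda_k\}$ matched by the $\mathfrak{g}_2$ root geometry. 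For parts $(b)$ and $(c)$ the real work is diagonalizing $\mathrm{ad}(X_1)$, resp. $\mathrm{ad}(Y_2)$, on the $5$-dimensional complement. Using \eqref{brackets:0}: $\mathrm{ad}(X_1)$ kills $Y_3$ (check $[X_1,Y_3]=0$ from the relations, since $Y_3$ pairs with $X_1$ under the $\mathfrak{so}(3)$ structure) and acts as a rotation on $\Span\{X_2,X_3\}$ and on $\Span\{Y_1,Y_2\}$; the intertwiner $T$ is read off by matching these two rotation actions, and one checks $(T X, T Y) = (X,Y)$ using orthonormality of $\{X_2,X_3\}$ and $\{Y_1,Y_2\}$ up to the common scaling. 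For $(c)$, $\mathrm{ad}(Y_2)$ fixes $X_2$ (one verifies $[Y_2,X_2]=0$), and on the remaining $4$-dimensional space $\Span\{X_1,X_3,Y_1,Y_3\}$ one computes the matrix of $\mathrm{ad}(Y_2)$ from \eqref{brackets:0} — entries involving the $\tfrac{4}{3}$ terms — finds its characteristic polynomial, and sees that the nonzero eigenvalues are $\pm i\,\omega_{\pm}$ with $\omega_\pm$ governed by a quadratic whose discriminant produces the $\sqrt{13}$; the two $2$-dimensional eigenspaces are then spanned by the vectors displayed, and their inequivalence follows because the two rotation speeds $\omega_+\neq\omega_-$ differ, so no equivariant isomorphism can exist.

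The main obstacle is the explicit diagonalization in part $(c)$: the $4\times 4$ block of $\mathrm{ad}(Y_2)$ mixes the $X$'s and $Y$'s through the $\tfrac{4}{3}$-terms in $[Y_1,Y_2]$, $[Y_1,Y_3]$, $[Y_2,Y_3]$, and one must carefully track signs and the fact that $\{X_i\}$ and $\{Y_i\}$ need not be orthonormal with the \emph{same} normalization under $-B$, only orthogonal. Concretely I would compute $[Y_2, aX_1+bX_3+cY_1+dY_3]$, assemble the $4\times4$ matrix, note it is skew-symmetric with respect to a diagonal (not scalar) metric, compute its eigenvalues, and match. Once the eigenvalue structure is in hand, everything else — irreducibility of each block (a $2$-dimensional real space on which a nonzero skew operator acts with no real eigenvalue is irreducible) and the nonexistence of intertwiners between blocks with distinct eigenvalues — is immediate. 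Since the statement is quoted from \cite{PSM}, an acceptable alternative is simply to cite that reference for the classification and restrict the present verification to recording $\mathfrak{k}_\Theta$, the chosen $\mathfrak{m}_\Theta$, and the explicit maps $T$, which I would present in a short remark rather than a full proof.
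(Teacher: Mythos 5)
The paper itself gives no proof of this proposition: it is quoted verbatim from Patr\~ao--San Martin and justified by the citation to \cite{PSM}, so your closing fallback (``simply cite that reference'') is in fact what the paper does. Judged as a direct verification, however, your sketch has two genuine gaps. First, the premise that $X_1,\dots,Y_3$ form a $(-B)$-orthogonal basis of $\mathfrak{k}$ is false: the paper computes $(X_1,Y_3)=-(X_2,Y_2)=(X_3,Y_1)=\tfrac{8}{3}\neq 0$, and Remark 2.2 explicitly warns that the submodules in this proposition are \emph{not} pairwise orthogonal (this is the whole reason Proposition \ref{new:submodules} introduces the $W_i,Z_i$). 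Consequently your identification of $\mathfrak{m}_\Theta$ as ``the span of the remaining basis vectors'' via orthogonal complementation does not work as stated --- e.g.\ $Y_3$ is not orthogonal to $\mathfrak{k}_{\{\alpha_1\}}=\Span\{X_1\}$ --- and the decomposition in the proposition must be understood with respect to a non-orthogonal reductive complement; your later remark in part $(c)$ that the $X_i$ and $Y_i$ are ``only orthogonal'' with different normalizations repeats the same error.

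Second, and more seriously, reducing irreducibility and equivalence to diagonalizing $\operatorname{ad}(X_1)$ and $\operatorname{ad}(Y_2)$ only sees the identity component of $K_\Theta$. The isotropy groups of real flag manifolds are disconnected, $K_\Theta=M\cdot(K_\Theta)_0$ with $M$ a nontrivial finite group, and for $\Theta=\emptyset$ the group $K_\emptyset=M$ is entirely finite, so the Lie-algebra computation says nothing about part $(a)$; your treatment of the equivalence classes there is an appeal to \cite{PSM}, not an argument. Inequivalence and irreducibility do pass from $(K_\Theta)_0$ to $K_\Theta$, so your conclusions in part $(c)$ and the irreducibility claims in part $(b)$ would survive, but \emph{equivalence} does not: matching the two rotation actions of $\operatorname{ad}(X_1)$ on $\Span\{X_2,X_3\}$ and $\Span\{Y_2,Y_1\}$ only shows they are equivalent as $(K_{\{\alpha_1\}})_0$-modules, and one must still check that $T$ intertwines the action of the component group $M$. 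Handling $M$ is precisely the content of \cite{PSM}, and a self-contained proof cannot omit it.
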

\begin{remark} The irreducible submodules provided in Proposition \ref{isotropy:representation:g_2} are not unique and are not pairwise orthogonal with respect to the inner product $(\cdot,\cdot)$. This lack of orthogonality complicates the characterization of invariant metrics. Therefore, we will consider different irreducible submodules of the isotropy representation \eqref{isotropy:representation} beyond those presented in Proposition \ref{isotropy:representation:g_2} (see Proposition \ref{new:submodules}).
\end{remark}
\begin{remark}\label{remark:2.3} The representation $$\Ad^{K_{\{\alpha_1\}}}\big{|}_{\Span\{X_2,X_3\}}:K_{\{\alpha_1\}}\rightarrow \textnormal{GL}(\Span\{X_2,X_3\})$$ is orthogonal. This means that the vector space $\textnormal{End}(\Span\{X_2,X_3\})$  of all equivariant endomorphism of $\Span\{X_2,X_3\}$ is isomorphic to $\mathbb{R}.$ As a consequence, any equivariant isomorphism from $\Span\{X_2,X_3\}$ to $\Span\{Y_2,Y_1\}$ is a scalar multiple of $T.$ Further details can be found in \cite{PSM} for reference.
\end{remark}

\begin{proposition} The vectors
\begin{align*}
    &W_i:=\frac{1}{2}X_i,\ i=1,2,3,\\
    &Z_1:=\frac{3}{2\sqrt{13}}Y_1-\frac{1}{\sqrt{13}}X_3, \hspace{0.4cm}
     Z_2:=\frac{3}{2\sqrt{13}}Y_2+\frac{1}{\sqrt{13}}X_2,\hspace{0.4cm}
     Z_3:=\frac{3}{2\sqrt{13}}Y_3-\frac{1}{\sqrt{13}}X_1,
\end{align*}
form an $(\cdot,\cdot)-$orthonormal basis for $\mathfrak{k},$ and satisfy the following bracket relations:
\begin{equation}\label{brackets:1}
    \begin{array}{lll}
	[W_1,W_2]=\frac{1}{2}W_3, & [W_2,W_3]=\frac{1}{2}W_1, & [W_3,Z_3]=-\frac{1}{2}Z_2,\\
	\\
	\textnormal{[}W_1,W_3\textnormal{]}=-\frac{1}{2}W_2, & [W_2,Z_1]=\frac{1}{2}Z_3, & [Z_1,Z_2]=\frac{1}{2}W_1,\\
	\\
	\textnormal{[}W_1,Z_1\textnormal{]}=\frac{1}{2}Z_2, & [W_2,Z_3]=-\frac{1}{2}Z_1, & [Z_1,Z_3]=\frac{1}{2}W_2,\\
	\\
	\textnormal{[}W_1,Z_2\textnormal{]}=-\frac{1}{2}Z_1, & [W_3,Z_2]=\frac{1}{2}Z_3, & [Z_2,Z_3]=\frac{1}{2}W_3.\\
	\end{array}
\end{equation}
\end{proposition}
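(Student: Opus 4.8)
The plan is to verify directly that the listed vectors are $(\cdot,\cdot)$-orthonormal and then compute the brackets from the relations \eqref{brackets:0}. The orthonormality statement is the part requiring a small preliminary computation: since $(\cdot,\cdot) = -B$ where $B$ is the Killing form of $\mathfrak{k} \cong \mathfrak{so}(3)\oplus\mathfrak{so}(3)$, I first need the Gram matrix of the spanning set $\{X_1,X_2,X_3,Y_1,Y_2,Y_3\}$ with respect to $(\cdot,\cdot)$. Using \eqref{brackets:0} one computes $\operatorname{ad}_{X_i}$ and $\operatorname{ad}_{Y_i}$ on this spanning set and takes traces of the relevant compositions; by the symmetry of the bracket relations I expect $(X_i,X_j)$, $(Y_i,Y_j)$, and $(X_i,Y_j)$ to vanish for the appropriate index pairs and the nonzero diagonal-type entries to be equal within each block, so that $(X_i,X_i)$ is some constant $c_1$, $(Y_i,Y_i)$ is some constant $c_2$, and the only nonzero cross terms are $(X_1,Y_3)$, $(X_2,Y_2)$, $(X_3,Y_1)$, equal to some constant $c_3$ up to sign dictated by the equivalence classes in Proposition~\ref{isotropy:representation:g_2}(a). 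Once these constants are pinned down, a direct check shows $\|W_i\|^2 = \tfrac14 c_1 = 1$ and $\|Z_i\|^2 = \tfrac{9}{4\cdot 13}c_2 + \tfrac{1}{13}c_1 \mp \tfrac{3}{13}c_3 = 1$, together with $(W_i,Z_j)=0$ and $(Z_i,Z_j)=0$ for $i\neq j$; the coefficients $\tfrac{3}{2\sqrt{13}}$ and $\tfrac{1}{\sqrt{13}}$ and the choice of signs are exactly what makes these normalizations work, so the computation should close without surprises.

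For the bracket relations \eqref{brackets:1}, I would substitute the definitions $W_i = \tfrac12 X_i$ and the explicit expressions for $Z_1,Z_2,Z_3$ into each bracket and expand using bilinearity and skew-symmetry, then reduce every term via \eqref{brackets:0}. For instance $[W_1,W_2] = \tfrac14[X_1,X_2] = \tfrac14 X_3 = \tfrac12 W_3$, which matches. The brackets involving one or two $Z$'s are more involved: e.g. $[W_1, Z_1] = \tfrac12\bigl[X_1, \tfrac{3}{2\sqrt{13}}Y_1 - \tfrac{1}{\sqrt{13}}X_3\bigr] = \tfrac12\bigl(\tfrac{3}{2\sqrt{13}}Y_2 - \tfrac{1}{\sqrt{13}}(-X_2)\bigr) = \tfrac12\bigl(\tfrac{3}{2\sqrt{13}}Y_2 + \tfrac{1}{\sqrt{13}}X_2\bigr) = \tfrac12 Z_2$, again as claimed; and a bracket of two $Z$'s such as $[Z_1,Z_2]$ expands into four terms $[Y_1,Y_2]$, $[Y_1,X_2]$, $[X_3,Y_2]$, $[X_3,X_2]$ (with the appropriate scalar prefactors), where the $\tfrac{4}{3}Y_3$ contributions from \eqref{brackets:0} are designed to cancel against the $[X_3, \cdot]$ terms, leaving only a multiple of $W_1$. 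This cancellation of the $\tfrac{4}{3}$-terms is the one genuinely delicate point, and it is the reason the particular linear combinations defining the $Z_i$ (and the relative signs among $Z_1$, $Z_2$, $Z_3$) were chosen; I would organize the twelve bracket computations so that each $Z$-$Z$ bracket is checked with this cancellation made explicit.

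The main obstacle is therefore purely bookkeeping: there are twelve bracket identities to verify (plus the implied skew-symmetric and "obvious-zero" ones), and the $Z$-$Z$ brackets each require tracking the $\tfrac{4}{3}$-coefficient contributions from clauses (iv) and (v) of the bracket definition together with the $\mathfrak{sl}(3)$-action terms, ensuring these combine to a clean multiple of a single $W_i$. No conceptual difficulty arises; the content of the proposition is that the stated basis diagonalizes the splitting $\mathfrak{k}\cong\mathfrak{so}(3)\oplus\mathfrak{so}(3)$ in a convenient normalized form, and the proof is a finite verification. I would present the orthonormality check as one displayed computation of the Gram matrix entries and then tabulate the bracket computations, highlighting only the $Z$-$Z$ cases in detail since the others are immediate from $W_i=\tfrac12 X_i$ and \eqref{brackets:0}.
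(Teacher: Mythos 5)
Your proposal is correct and follows essentially the same route as the paper: compute the Gram matrix of $\{X_i,Y_i\}$ under $(\cdot,\cdot)=-B$ (the nonzero entries being $(X_i,X_i)=4$, $(Y_i,Y_i)=\tfrac{68}{9}$, $(X_1,Y_3)=-(X_2,Y_2)=(X_3,Y_1)=\tfrac{8}{3}$, consistent with your normalization check $\|Z_i\|^2=\tfrac{17}{13}+\tfrac{4}{13}-\tfrac{8}{13}=1$), then verify the twelve brackets by direct expansion from \eqref{brackets:0}. The only cosmetic difference is that the paper phrases the construction of the basis as an application of Gram--Schmidt rather than a direct orthonormality verification, and your sample computations (including the cancellation of the $\tfrac{4}{3}Y_3$ terms in $[Z_1,Z_2]$) check out.
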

\begin{proof} A straightforward calculation reveals that the non-zero inner products among the vectors $X_i$ and $Y_i$ (where $i = 1, 2, 3$) are
\begin{align*}
&(X_i, X_i) = 4, \quad (Y_i, Y_i) = \frac{68}{9}, \quad i = 1, 2, 3, \\
&(X_1, Y_3) = -(X_2, Y_2) = (X_3, Y_1) = \frac{8}{3}.
\end{align*}

The $(\cdot, \cdot)$-orthonormal basis $\{W_1, W_2, W_3, Z_1, Z_2, Z_3\}$ is obtained by applying the Gram-Schmidt algorithm to orthonormalize the basis $\{X_1, X_2, X_3, Y_1, Y_2, Y_3\}$ with respect to the inner product $(\cdot, \cdot).$ The relations \eqref{brackets:1} follow from a lengthy calculation using \eqref{brackets:0}.
\end{proof}
\begin{lemma}\label{lemma} Let $\rho: G \rightarrow \text{GL}(V)$ and $\tau: G \rightarrow \text{GL}(W)$ be equivalent representations of a Lie group $G$. Let $T: V \rightarrow W$ be an equivariant isomorphism. Define the graph of $T$ as the set $$\textnormal{graph}(T):=\{X+T(X)\in V\oplus W:X\in V\}\subseteq V\oplus W.$$ Then $\textnormal{graph}(T)$ is a vector space, and the map $$\gamma:G\rightarrow\textnormal{GL}(\textnormal{graph}(T)),\ \gamma(a)(X+T(X)):=\rho(a)X+\tau(a)T(X),\ a\in G,\ X\in V,$$ is a representation of $G$ that is equivalent to $\rho$.
\end{lemma}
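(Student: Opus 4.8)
The plan is to verify the three claims in sequence: that $\operatorname{graph}(T)$ is a linear subspace of $V \oplus W$, that $\gamma$ is a well-defined representation of $G$, and that $\gamma$ is equivalent to $\rho$. Throughout, the key structural fact I would exploit is that the map $\iota \colon V \to V \oplus W$ defined by $\iota(X) := X + T(X)$ is linear and injective, with image exactly $\operatorname{graph}(T)$; everything follows from transporting structure along $\iota$.

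First I would check that $\operatorname{graph}(T)$ is a vector space. Since $T$ is linear (being an equivariant isomorphism of representations, in particular a linear map), for $X, X' \in V$ and scalars $c, c'$ we have $c(X + T(X)) + c'(X' + T(X')) = (cX + c'X') + T(cX + c'X') \in \operatorname{graph}(T)$, and $0 = 0 + T(0) \in \operatorname{graph}(T)$; hence $\operatorname{graph}(T)$ is closed under linear combinations and is a subspace of $V \oplus W$. Equivalently, $\operatorname{graph}(T) = \iota(V)$ is the image of the linear map $\iota$, hence a subspace.

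Next I would show $\gamma$ is well-defined and a representation. Well-definedness is the point where one must be slightly careful: the formula $\gamma(a)(X + T(X)) := \rho(a)X + \tau(a)T(X)$ defines $\gamma(a)$ by specifying its value on the element $X + T(X)$, so one must check this does not depend on the choice of $X$ representing that element. But $\iota$ is injective (if $X + T(X) = X' + T(X')$, comparing $V$-components gives $X = X'$), so each element of $\operatorname{graph}(T)$ determines $X$ uniquely, and $\gamma(a)$ is well-defined. In fact $\gamma(a) = \iota \circ (\rho(a)) \circ \iota^{-1}$ as a map $\operatorname{graph}(T) \to \operatorname{graph}(T)$: indeed $\iota^{-1}(X + T(X)) = X$, then $\rho(a)X$, then $\iota(\rho(a)X) = \rho(a)X + T(\rho(a)X) = \rho(a)X + \tau(a)T(X)$, where the last equality is precisely the equivariance $T \circ \rho(a) = \tau(a) \circ T$. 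From the expression $\gamma(a) = \iota \circ \rho(a) \circ \iota^{-1}$ it is immediate that $\gamma(a)$ is linear and invertible (composite of linear isomorphisms), that $\gamma(e) = \operatorname{id}$, and that $\gamma(ab) = \iota \rho(ab) \iota^{-1} = \iota \rho(a)\rho(b)\iota^{-1} = (\iota\rho(a)\iota^{-1})(\iota\rho(b)\iota^{-1}) = \gamma(a)\gamma(b)$; continuity/smoothness of $a \mapsto \gamma(a)$ is inherited from that of $\rho$ through the fixed linear conjugation. Hence $\gamma \colon G \to \operatorname{GL}(\operatorname{graph}(T))$ is a representation.

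Finally, the equivalence $\gamma \cong \rho$ is exactly the statement that $\iota \colon V \to \operatorname{graph}(T)$ is an intertwiner: $\iota$ is a linear isomorphism onto $\operatorname{graph}(T)$, and the identity $\gamma(a) \circ \iota = \iota \circ \rho(a)$ for all $a \in G$ is just a rearrangement of $\gamma(a) = \iota \circ \rho(a) \circ \iota^{-1}$ established above. This completes the proof. I do not anticipate a genuine obstacle here — the lemma is essentially formal — but the one place warranting explicit care is the well-definedness of $\gamma(a)$, which hinges on the injectivity of $\iota$; the rest is a routine verification that conjugation by a fixed isomorphism preserves the axioms of a representation.
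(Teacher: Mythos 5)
Your proof is correct and follows essentially the same route as the paper: your map $\iota$ is exactly the paper's intertwiner $\tilde{T}:X\mapsto X+T(X)$, and the key computation $\iota(\rho(a)X)=\rho(a)X+T(\rho(a)X)=\rho(a)X+\tau(a)T(X)=\gamma(a)(\iota(X))$ is the same one the paper uses. Your extra care about the well-definedness of $\gamma(a)$ (via injectivity of $\iota$) is a point the paper passes over silently, but it does not change the argument.
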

\begin{proof} Since $T$ is linear, $\text{graph}(T)$ is a vector subspace of $V \oplus W$. The fact that $\gamma$ defines a representation of $G$ comes from the fact that $\rho$ and $\tau$ are representations of $G.$ Finally, the map $\tilde{T}: V \rightarrow \text{graph}(T)$, defined as $X \mapsto X + T(X)$, is trivially a linear isomorphism, and for every $a \in G$ and $X \in V$, we have
\begin{align*}
    \tilde{T}\rho(a)(X) &= \rho(a)X+T(\rho(a)X)\\
                        &= \rho(a)X+\tau(a)T(X), \hspace{1cm} \textnormal{since}\ T\ \textnormal{is equivariant},\\
                        &= \gamma(a)\left(X+T(X)\right)\\
                        &= \gamma(a)\tilde{T}(X).
\end{align*}
    That is, $\tilde{T}\rho(a)=\gamma(a)\Tilde{T},$ for all $a\in G.$ Hence $\tilde{T}$ is an equivariant isomorphism and, consequently, $\gamma$ and $\rho$ are equivalent.
\end{proof}
\begin{proposition}\label{new:submodules}
Let $\mathbb{F}_\Theta$ be a real flag manifold of $\mathfrak{g}_2.$ Then the following statements hold:
	\begin{itemize}
		\item[$a)$] If $\Theta=\emptyset$, the representation \eqref{isotropy:representation} can be decomposed into six one-dimensional submodules given by $$\textnormal{span}\left\{W_i\right\},\textnormal{span}\left\{Z_i\right\},\ i=1,2,3.$$ All these submodules are irreducible and the equivalence classes among them are \begin{align*}
		&\{\textnormal{span}\left\{W_1\right\},\textnormal{span}\left\{Z_3\right\}\},\\ &\{\textnormal{span}\left\{W_2\right\},\textnormal{span}\left\{Z_2\right\}\},\\ &\{\textnormal{span}\left\{W_3\right\},\textnormal{span}\left\{Z_1\right\}\}.
		\end{align*}
		\item[$b)$] If $\Theta=\{\alpha_1\}$, the representation \eqref{isotropy:representation} can be decomposed into the irreducible subrepresentations 
		\begin{align*}
		&\textnormal{span}\left\{Z_3\right\},\\ &\textnormal{span}\left\{W_2,W_3\right\},\\ &\textnormal{span}\left\{Z_2,-Z_1\right\},
		\end{align*}
		where the two-dimensional submodules are equivalent and the map $$\begin{array}{rccc}
  \tilde{T}:&\Span\{W_2,W_3\}&\longrightarrow&\Span\{Z_2,-Z_1\}\\
  & W_2 & \longmapsto & Z_2\\
  & W_3 & \longmapsto & -Z_1
  \end{array}$$ is an equivariant isomorphism. 
		\item[$c)$] If $\Theta=\{\alpha_2\}$, the representation \eqref{isotropy:representation} can be decomposed into the irreducible inequivalent subrepresentations
		\begin{align*}
		&\textnormal{span}\left\{\frac{\sqrt{13}W_2+2Z_2}{\sqrt{17}}\right\},\\ &\textnormal{span}\left\{\frac{W_1+Z_3}{\sqrt{2}},\frac{W_3+Z_1}{\sqrt{2}}\right\},\\
		&\textnormal{span}\left\{\frac{W_1-Z_3}{\sqrt{2}},\frac{W_3-Z_1}{\sqrt{2}}\right\}.
		\end{align*} 
		\end{itemize}
\end{proposition}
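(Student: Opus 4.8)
The plan is to deduce each item of Proposition \ref{new:submodules} from the corresponding item of Proposition \ref{isotropy:representation:g_2} together with Lemma \ref{lemma}, by exhibiting explicit $\Ad(K_\Theta)$-equivariant linear isomorphisms between the old submodules (written in the $X_i, Y_i$ basis) and the new ones (written in the $W_i, Z_i$ basis). The underlying subspaces are forced: since each new module is a span of combinations of the $W_i$ and $Z_i$, and these are related to the $X_i, Y_i$ by the triangular Gram--Schmidt change of basis recorded in the preceding proposition, one first rewrites the new proposed submodules in terms of $X_i, Y_i$ and checks that each equals (as a subspace) the corresponding module of Proposition \ref{isotropy:representation:g_2}; equality of subspaces plus the already-established irreducibility/equivalence statements for the old modules then transports to the new ones, because isotropy summands spanning the same subspace carry the same subrepresentation.

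Concretely, for $\Theta=\emptyset$ (part $a$) there is nothing to do beyond observing that $\Span\{W_i\}=\Span\{X_i\}$ for $i=1,2,3$, while $Z_i$ is a nonzero linear combination of $Y_i$ and one of the $X_j$; since $K_\emptyset=\{e\}$ every one-dimensional space is an irreducible submodule, and two one-dimensional trivial representations are always equivalent, so the only content is to recover the listed equivalence classes, which I would do by matching $\Span\{Z_3\}$ against $\Span\{Y_3\}$ inside the class of $\Span\{X_1\}$, etc. (Alternatively one can verify directly from the bracket relations \eqref{brackets:1} that the relevant equivariant maps exist.) For part $b$, I would compute $\tilde T$ explicitly: starting from the equivariant isomorphism $T$ of Proposition \ref{isotropy:representation:g_2}$(b)$ sending $X_2\mapsto Y_2$, $X_3\mapsto -Y_1$, and using $W_i=\tfrac12 X_i$ together with the formulas for $Z_1, Z_2$, one checks that the map $W_2\mapsto Z_2$, $W_3\mapsto -Z_1$ is, up to the scalar dictated by Remark \ref{remark:2.3}, the composition of $T$ with the appropriate basis changes on source and target, hence equivariant; Lemma \ref{lemma} is not strictly needed here but gives a clean way to see the two $2$-dimensional modules are equivalent. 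The point requiring care is that $\Span\{W_2,W_3\}$ and $\Span\{Z_2,-Z_1\}$ are genuinely $\Ad(K_{\{\alpha_1\}})$-invariant and irreducible: this follows because $\Span\{W_2,W_3\}=\Span\{X_2,X_3\}$, which is module $(b)$ of the previous proposition, and $\Span\{Z_1,Z_2\}$, being a $2$-dimensional subspace of $\Span\{X_2,X_3,Y_1,Y_2\}$ transverse to $\Span\{X_2,X_3\}$ and mapped to itself, coincides with $\Span\{Y_1,Y_2\}$, module $(b)$'s third summand.

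Part $c$ is the one demanding the most bookkeeping and is where I expect the main obstacle. Here $K_{\{\alpha_2\}}$ is one-dimensional with Lie algebra $\Span\{Y_2\}$, so the isotropy action on $\mathfrak m_{\{\alpha_2\}}$ is by $\operatorname{ad}(Y_2)$ (up to exponentiation), and I would diagonalize/block-diagonalize $\operatorname{ad}(Y_2)$ restricted to $\mathfrak m_{\{\alpha_2\}}$ using the brackets \eqref{brackets:0} to identify its invariant subspaces. The claim is that the three displayed spans are precisely the three $\operatorname{ad}(Y_2)$-invariant summands: the $1$-dimensional one must be $\Span\{X_2\}=\Span\{(\sqrt{13}W_2+2Z_2)/\sqrt{17}\}$ (one checks $\sqrt{13}W_2+2Z_2$ is a nonzero multiple of $X_2$ from the definitions of $W_2, Z_2$), and the two $2$-dimensional ones are the $\pm\sqrt{13}$-type eigenspaces appearing in Proposition \ref{isotropy:representation:g_2}$(c)$; to finish I would verify that $(\sqrt{13}-2)X_3+3Y_1$ is proportional to $W_1+Z_3$ or $W_3+Z_1$ after the substitution $X_i=2W_i$, $Y_1=\tfrac{2\sqrt{13}}{3}(Z_1+\tfrac{1}{\sqrt{13}}X_3)=\tfrac{2\sqrt{13}}{3}Z_1+\tfrac23 X_3$, and similarly for the remaining generators, so that the new spans coincide with the old ones; inequivalence is then inherited from Proposition \ref{isotropy:representation:g_2}$(c)$. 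The only real risk is an arithmetic slip in these substitutions, which I would guard against by double-checking that the proposed vectors are genuinely unit vectors (as the $1/\sqrt{17}$ and $1/\sqrt 2$ normalizations assert) and orthogonal across the two $2$-dimensional blocks.
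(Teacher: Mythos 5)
Your overall strategy --- rewrite each new summand in the $X_i,Y_i$ basis and check that it equals, as a subspace, the corresponding summand of Proposition \ref{isotropy:representation:g_2} --- fails at exactly the points the proposition is designed to address: several of the new summands are \emph{not} equal to the old ones (this is the content of the Remark following Proposition \ref{isotropy:representation:g_2}). Concretely: in part $(c)$ you assert that $\sqrt{13}W_2+2Z_2$ is a nonzero multiple of $X_2$; it is not, since $\sqrt{13}W_2+2Z_2=\frac{17}{2\sqrt{13}}X_2+\frac{3}{\sqrt{13}}Y_2$. (The paper instead observes that this vector is $(\cdot,\cdot)$-orthogonal to $Y_2$, so its span is the orthogonal complement of $\mathfrak{k}_{\{\alpha_2\}}$ inside the invariant subspace $\mathfrak{k}_{\{\alpha_2\}}\oplus\Span\{\sqrt{13}W_2+2Z_2\}$, hence invariant.) In part $(b)$ you claim $\Span\{Z_1,Z_2\}=\Span\{Y_1,Y_2\}$; this is false because $Z_1=\frac{3}{2\sqrt{13}}Y_1-\frac{1}{\sqrt{13}}X_3$ and $Z_2=\frac{3}{2\sqrt{13}}Y_2+\frac{1}{\sqrt{13}}X_2$ have nonzero $X$-components, and in any case ``invariant, two-dimensional and transverse to $\Span\{X_2,X_3\}$'' does not pin down a unique complement inside an isotypic component of multiplicity two. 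You also never establish that $\Span\{Z_3\}$ is $K_{\{\alpha_1\}}$-invariant. Finally, in part $(a)$ the premise $K_\emptyset=\{e\}$ is wrong: only the Lie algebra $\mathfrak{k}_\emptyset$ vanishes, while $K_\emptyset$ is a nontrivial finite group --- if it were trivial, all six lines would be mutually equivalent, contradicting the three equivalence classes you are trying to recover.

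The mechanism that actually works, and which the paper uses throughout but you invoke only in passing, is Lemma \ref{lemma}: the subspaces $\Span\{Z_3\}$, $\Span\{Z_2,-Z_1\}$, etc.\ arise as graphs of suitably rescaled equivariant isomorphisms over the old summands (for instance $\Span\{Z_2,-Z_1\}$ is the graph of $\frac{3}{2}T$ over $\Span\{X_2,X_3\}$), which makes them invariant, irreducible and equivalent to their domains without any claim that they coincide with $\Span\{Y_2,Y_1\}$; the summands not of this form ($\Span\{Z_3\}$ in case $(b)$ and the line in case $(c)$) are obtained as orthogonal complements of $\mathfrak{k}_\Theta$ inside invariant subspaces. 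Your identification of the two-dimensional summands in part $(c)$ with those of Proposition \ref{isotropy:representation:g_2}$(c)$ is the one place where subspace equality genuinely holds, and that computation of yours is correct.
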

\begin{proof} For the proof of $a),$ let us assume that $\Theta=\emptyset.$ Then, by Proposition \ref{isotropy:representation:g_2}, the one-dimensional subspaces $\Span\{X_i\},\Span\{Y_i\},\ i=1,2,3$  are $K_\emptyset$-invariant irreducible subspaces of $\mathfrak{m}_\emptyset.$ Additionally, $\Span\{X_1\}$ is equivalent to $\Span\{Y_3\},$ $\Span\{X_2\}$ is equivalent to $\Span\{Y_2\}$ and $\Span\{X_3\}$ is equivalent to $\Span\{Y_1\}.$ Since these all submodules are one-dimensional, the linear maps \begin{align*}
    &\begin{array}{rccc}
         T_1:&\Span\{X_1\} &\longrightarrow & \Span\{Y_3\}\\
         & X_1  & \longmapsto & -\frac{3}{2}Y_3 
    \end{array},\\
    \\
    &\begin{array}{rccc}
         T_2:&\Span\{X_2\} &\longrightarrow & \Span\{Y_2\}\\
         & X_2  & \longmapsto & \frac{3}{2}Y_2 
    \end{array},\\
    \\
    &\begin{array}{rccc}
         T_3:&\Span\{X_3\} &\longrightarrow & \Span\{Y_1\}\\
         & X_3  & \longmapsto & -\frac{3}{2}Y_1 
    \end{array}
     \end{align*}
are equivariant isomorphisms. By Lemma \ref{lemma}, $\{\lambda X_1+T_1(\lambda X_1):\lambda\in\mathbb{R}\}$ is a $K_\emptyset$-invariant subspace of $\mathfrak{m}_\emptyset$ that is equivalent to $\Span\{X_1\},$ $\{\lambda X_2+T_1(\lambda X_2):\lambda\in\mathbb{R}\}$ is a $K_\emptyset$-invariant subspace of $\mathfrak{m}_\emptyset$ that is equivalent to $\Span\{X_2\},$ and $\{\lambda X_3+T_1(\lambda X_3):\lambda\in\mathbb{R}\}$ is a $K_\emptyset$-invariant subspace of $\mathfrak{m}_\emptyset$ that is equivalent to $\Span\{X_3\}.$ The statement $a)$ now follows from the fact that
\begin{align*}
    \{\lambda X_1+T_1(\lambda X_1):\lambda\in\mathbb{R}\} &=\left\{\lambda X_1-\lambda\frac{3}{2}Y_3:\lambda\in\mathbb{R}\right\}\\
    &=\left\{\lambda\left( X_1-\frac{3}{2}Y_3\right):\lambda\in\mathbb{R}\right\}\\
    &=\Span\left\{X_1-\frac{3}{2}Y_3\right\}\\
    &=\Span\left\{-\sqrt{13}Z_3\right\}\\
    &=\Span\{Z_3\},
\end{align*}
\begin{align*}
    \{\lambda X_2+T_2(\lambda X_2):\lambda\in\mathbb{R}\} &=\left\{\lambda X_2+\lambda\frac{3}{2}Y_2:\lambda\in\mathbb{R}\right\}\\
    &=\left\{\lambda\left( X_2+\frac{3}{2}Y_2\right):\lambda\in\mathbb{R}\right\}\\
    &=\Span\left\{X_2+\frac{3}{2}Y_2\right\}\\
    &=\Span\left\{\sqrt{13}Z_2\right\}\\
    &=\Span\{Z_2\},
\end{align*}
\begin{align*}
    \{\lambda X_3+T_3(\lambda X_3):\lambda\in\mathbb{R}\} &=\left\{\lambda X_3-\lambda\frac{3}{2}Y_1:\lambda\in\mathbb{R}\right\}\\
    &=\left\{\lambda\left( X_3-\frac{3}{2}Y_1\right):\lambda\in\mathbb{R}\right\}\\
    &=\Span\left\{X_3-\frac{3}{2}Y_1\right\}\\
    &=\Span\left\{-\sqrt{13}Z_1\right\}\\
    &=\Span\{Z_1\},
\end{align*}
and $\Span\{X_i\}=\Span\left\{\frac{3}{2}X_i\right\}=\Span\{W_i\},$ for $i=1,2,3.$\\

Let us prove $b).$ Assume that $\Theta=\{\alpha_1\}.$ By Proposition \ref{isotropy:representation:g_2}, the sets $$\Span\{Y_3\},\ \Span\{X_2,X_3\},\ \Span\{Y_2,Y_1\}$$ are $K_{\{\alpha_1\}}$-invariant irreducible subspaces of $\mathfrak{m}_{\{\alpha_1\}},$ and the linear map $$\begin{array}{rccc}
  T:&\Span\{X_2,X_3\}&\longrightarrow&\Span\{Y_2,Y_1\}\\
  & X_2 & \longmapsto & Y_2\\
  & X_3 & \longmapsto & -Y_1
  \end{array}$$ is an equivariant isomorphism. Then, $\frac{3}{2}T$ is also an equivariant isomorphism. By Lemma \ref{lemma}, the set $$\left\{\lambda_1X_2+\lambda_2 X_3+\frac{3}{2}T\left(\lambda_1X_2+\lambda_2 X_3\right):\lambda_1,\lambda_2\in\mathbb{R}\right\}$$ is a $K_{\{\alpha_1\}}$-invariant irreducible subspace of $\mathfrak{m}_{\{\alpha_1\}}$ that is equivalent to $\Span\{X_2,X_3\}.$ But
  
  \begin{align*}
      &\left\{\lambda_1X_2+\lambda_2 X_3+\frac{3}{2}T\left(\lambda_1X_2+\lambda_2 X_3\right):\lambda_1,\lambda_2\in\mathbb{R}\right\}\\
      \\
      =\ &\left\{\lambda_1X_2+\lambda_2 X_3+\lambda_1\frac{3}{2}Y_2-\lambda_2 \frac{3}{2}Y_1:\lambda_1,\lambda_2\in\mathbb{R}\right\}\\
      \\
      =\ &\left\{\lambda_1\left(X_2+\frac{3}{2}Y_2\right)+\lambda_2\left( X_3-\frac{3}{2}Y_1\right):\lambda_1,\lambda_2\in\mathbb{R}\right\}\\
      \\
      =\ &\left\{\lambda_1\left(\sqrt{13}Z_2\right)+\lambda_2\left(-\sqrt{13}Z_1\right):\lambda_1,\lambda_2\in\mathbb{R}\right\}\\
      \\
      =\ &\Span\{Z_2,-Z_1\}.
  \end{align*}
  Since $\Span\{X_2,X_3\}=\Span\left\{\frac{1}{2}X_2,\frac{1}{2}X_3\right\}=\Span\{W_2,W_3\},$ then $\Span\{W_2,W_3\}$ and $\Span\{Z_2,-Z_1\}$ are $K_{\{\alpha_1\}}$-invariant irreducible equivalent subspaces of $\mathfrak{m}_{\{\alpha_1\}}.$ Moreover, from the proof of Lemma \ref{lemma}, we have that the map $$\Span\{W_2,W_3\}\ni X\mapsto X+\frac{3}{2}T(X)\in \Span\{Z_2,-Z_1\}$$ is an intertwining isomorphism. Therefore, the map $$\Span\{W_2,W_3\}\ni X\mapsto \frac{2}{\sqrt{13}}\left(X+\frac{3}{2}T(X)\right)\in \Span\{Z_2,-Z_1\}$$ is also an interwining isomorphism. This last map is nothing but $\tilde{T}$ since $$\frac{2}{\sqrt{13}}\left(W_2+\frac{3}{2}T(W_2)\right)=Z_2\ \textnormal{and}\ \frac{2}{\sqrt{13}}\left(W_3+\frac{3}{2}T(W_3)\right)=-Z_1.$$ So far we have shown that the subspaces $\Span\{W_2,W_3\}$ and $\Span\{Z_2,-Z_1\}$ are $K_{\{\alpha_1\}}$-invariant irreducible and equivalent, and that $\tilde{T}$ is an intertwining isomorphism between them. To prove that $\Span\{Z_3\}$ is $K_{\{\alpha_1\}}$-invariant, consider the representation $$\Ad^{K_{\{\alpha_1\}}}\big{|}_{\mathfrak{k}_{\{\alpha_1\}}\oplus\Span\{Y_3\}}:K_{\{\alpha_1\}}\rightarrow\textnormal{GL}\left(\mathfrak{k}_{\{\alpha_1\}}\oplus\Span\{Y_3\}\right).$$ The inner product $(\cdot,\cdot)$ is $\Ad(K_{\{\alpha_1\}})$-invariant because it is $\Ad(K)$-invariant. Additionally, $(X_1,Z_1)=0,$ so the vector space $\Span\{Z_3\}$ is the orthogonal complement of $\mathfrak{k}_{\{\alpha_1\}}=\Span\{X_1\}$ in $\mathfrak{k}_{\{\alpha_1\}}\oplus\Span\{Y_3\}$ with respect to $(\cdot,\cdot).$ Since $\mathfrak{k}_{\{\alpha_1\}}$ is the Lie algebra of $K_{\{\alpha_1\}},$ then $\mathfrak{k}_{\{\alpha_1\}}$ is $K_{\{\alpha_1\}}$-invariant, so is its $(\cdot,\cdot)$-orthogonal complement $\Span\{Z_3\}.$ This completes the proof of $b).$\\

  To prove $c),$ let us consider the case where $\Theta=\{\alpha_2\}.$  Arguing as before, observe that $(\sqrt{13}W_2+2Z_2,Y_2)=0,$ so $\Span\{\sqrt{13}W_2+2Z_2\}$ is the $(\cdot,\cdot)$-orthogonal complement of $\mathfrak{k}_{\{\alpha_2\}}$ in $\mathfrak{k}_{\{\alpha_2\}}\oplus\Span\{\sqrt{13}W_2+2Z_2\}.$ Since $\mathfrak{k}_{\{\alpha_2\}}$ is $K_{\{\alpha_2\}},$ and $(\cdot,\cdot)$ is $\Ad(K_{\{\alpha_2\}})$-invariant, then $\Span\{\sqrt{13}W_2+2Z_2\}$ is $K_{\{\alpha_2\}}$-invariant. On the other hand, observe that 
  \begin{align*}
      &\frac{W_3+Z_1}{\sqrt{2}}=\frac{1}{2\sqrt{26}}\left((\sqrt{13}-2)X_3+3Y_1\right),\ \frac{W_1+Z_3}{\sqrt{2}}=\frac{1}{2\sqrt{26}}\left((\sqrt{13}-2)X_1+3Y_3\right)\\
      \\
      &\frac{W_3-Z_1}{\sqrt{2}}=\frac{1}{2\sqrt{26}}\left((\sqrt{13}+2)X_3-3Y_1\right),\ \frac{W_1+Z_3}{\sqrt{2}}=\frac{1}{2\sqrt{26}}\left((\sqrt{13}+2)X_1-3Y_3\right).
  \end{align*}
  Hence
  \begin{align*}
      &\textnormal{span}\left\{(\sqrt{13}-2)X_3+3Y_1,(\sqrt{13}-2)X_1+3Y_3\right\}=\textnormal{span}\left\{\frac{W_3+Z_1}{\sqrt{2}},\frac{W_1+Z_3}{\sqrt{2}}\right\}
  \end{align*}
  and
  \begin{align*}
      &\textnormal{span}\left\{(\sqrt{13}+2)X_3-3Y_1,(\sqrt{13}+2)X_1-3Y_3\right\}=\textnormal{span}\left\{\frac{W_3-Z_1}{\sqrt{2}},\frac{W_1-Z_3}{\sqrt{2}}\right\}.
  \end{align*}
  The proof is complete.
  \end{proof}
\section{Homogeneous geodesics on flag manifolds}\label{sec:hom_geo}
\subsection{$K$-invariant metrics} In this section we present a useful description of the $K$-invariant metrics on the real flag manifolds of $\mathfrak{g}_2.$ As in Section \ref{section:2.3}, we will fix the $\Ad(K)$-invariant inner product $(\cdot,\cdot)$ defined as the negative of the Killing form on $\mathfrak{k}.$  

\begin{theorem}\label{metrics:theorem} Let $\mathbb{F}_\Theta=K/K_\Theta$ be a flag of $\mathfrak{g}_2.$
	\begin{itemize}
		\item[$a)$] \allowbreak If $\Theta=\emptyset$, then for each metric operator $A:\mathfrak{m}_\emptyset\rightarrow\mathfrak{m}_\emptyset$ associated with an $\Ad(K_\emptyset)$-invariant inner product on $\mathfrak{m}_\emptyset,$ there exist positive numbers $\mu_1,...,\mu_6,$ and real numbers $a_1\in(-\sqrt{\mu_1\mu_{2}},\sqrt{\mu_1\mu_{2}}),$ $a_2\in(-\sqrt{\mu_3\mu_4},\sqrt{\mu_3\mu_4}),$ and $a_3\in(-\sqrt{\mu_5\mu_6},\sqrt{\mu_5\mu_6})$ such that $A$ is written the $(\cdot,\cdot)$-orthonormal basis $\mathcal{B}_\emptyset=\{W_1,Z_3,W_2,Z_2,W_3,Z_1\}$ as 
		\begin{equation}\label{metric:emptyset}
  \left[A\right]_{\mathcal{B}_\emptyset}=\left(\begin{array}{cccccc}\mu_1 & a_1 & 0 & 0 & 0 & 0\\
		a_1 & \mu_2 & 0 & 0 & 0 & 0\\
		0 & 0 & \mu_3 & a_2 & 0 & 0\\
		0 & 0 & a_2 & \mu_4 & 0 & 0\\
		0 & 0 & 0 & 0 & \mu_5 & a_3\\
		0 & 0 & 0 & 0 & a_3 & \mu_6
		\end{array}\right).
		\end{equation} This representation covers all metric operators associated with $\Ad(K_\emptyset)$-invariant inner products.\\
  
		\item[$b)$] If $\Theta=\{\alpha_1\}$, then for each metric operator $A:\mathfrak{m}_{\{\alpha_1\}}\rightarrow\mathfrak{m}_{\{\alpha_1\}}$ associated with an $\Ad(K_{\{\alpha_1\}})$-invariant inner product on $\mathfrak{m}_{\{\alpha_1\}},$ there exist positive numbers $\mu_1,\mu_2,\mu_3,$ and a real number $a\in(-\sqrt{\mu_2\mu_3},\sqrt{\mu_2\mu_3})$ such that $A$ is written in the basis $\mathcal{B}_{\{\alpha_1\}}=\{Z_3,W_2,W_3,Z_2,-Z_1\}$ as
		\begin{equation}\label{metric:alpha_1}
		\left[A\right]_{\mathcal{B}_{\{\alpha_1\}}}=\left(\begin{array}{ccccc}\mu_1 & 0 & 0 & 0 & 0\\
		0 & \mu_2 & 0 & a & 0 \\
		0 & 0 & \mu_2 & 0 & a \\
		0 & a & 0 & \mu_3 & 0 \\
		0 & 0 & a & 0 & \mu_3  \\
		\end{array}\right).
		\end{equation}
        This representation covers all metric operators associated with $\Ad\left(K_{\{\alpha_1\}}\right)$-invariant inner products.\\
        
		\item[$c)$] If $\Theta=\{\alpha_2\}$, then there exist positive real numbers $\mu_i,\ i=1,2,3$ such that the metric operator $A$ associated with an $\Ad\left(K_{\{\alpha_2\}}\right)$-invariant inner product on $\mathfrak{m}_{\{\alpha_2\}}$ is written in the basis $\mathcal{B}_{\{\alpha_2\}}=\left\{\frac{\sqrt{13}W_2+2Z_2}{\sqrt{17}},\frac{W_1+Z_3}{\sqrt{2}},\frac{W_3+Z_1}{\sqrt{2}},\frac{W_1-Z_3}{\sqrt{2}},\frac{W_3-Z_1}{\sqrt{2}}\right\}$ as
		\begin{equation}\label{metric:alpha_2}
		\left[A\right]_{\mathcal{B}_{\{\alpha_2\}}}=\left(\begin{array}{ccccc}\mu_1 & 0 & 0 & 0 & 0\\
		0 & \mu_2 & 0 & 0 & 0 \\
		0 & 0 & \mu_2 & 0 & 0 \\
		0 & 0 & 0 & \mu_3 & 0 \\
		0 & 0 & 0 & 0 & \mu_3  \\
		\end{array}\right).
		\end{equation}
        This representation covers all metric operators associated with $\Ad\left(K_{\{\alpha_2\}}\right)$-invariant inner products.\\
	\end{itemize}
\end{theorem}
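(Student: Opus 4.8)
The three assertions are all instances of the block description \eqref{positive:matrix} of invariant metric operators from Section \ref{section:2.1}, applied to the isotropy decompositions of Proposition \ref{new:submodules}. Recall that once one fixes $(\cdot,\cdot)$-orthonormal bases $\mathcal{B}_1,\dots,\mathcal{B}_s$ of the isotropy summands, matched on equivalent summands by the equivariant isometries $T_i^j$, every invariant metric operator $A$ has, in $\mathcal{B}=\mathcal{B}_1\cup\cdots\cup\mathcal{B}_s$, the form \eqref{positive:matrix}: positive scalars $\mu_i$ on the diagonal blocks $\mu_i\mathrm{I}_{d_i}$, equivariant maps $\mathfrak{m}_j\to\mathfrak{m}_i$ off-diagonal, and zero off-diagonal blocks between inequivalent summands; conversely every positive-definite matrix of that shape is such a metric operator. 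So for each $\Theta$ I would (1) check that the displayed basis $\mathcal{B}_\Theta$ is an adapted $(\cdot,\cdot)$-orthonormal basis of this type, (2) pin down the surviving off-diagonal blocks, and (3) rephrase positive-definiteness of the resulting block matrix as the stated parameter constraints. Step (1) is a one-line computation from the orthonormality of $\{W_1,W_2,W_3,Z_1,Z_2,Z_3\}$ (e.g. $\|\sqrt{13}W_2+2Z_2\|^2=13+4=17$ and $\|W_1\pm Z_3\|^2=2$), together with the observation that $\tilde T$ of Proposition \ref{new:submodules}$b)$ sends the orthonormal pair $\{W_2,W_3\}$ to the orthonormal pair $\{Z_2,-Z_1\}$ and is therefore a $(\cdot,\cdot)$-isometry.

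For $\Theta=\emptyset$, Proposition \ref{new:submodules}$a)$ gives six one-dimensional summands in the three equivalence classes $\{\Span\{W_1\},\Span\{Z_3\}\}$, $\{\Span\{W_2\},\Span\{Z_2\}\}$, $\{\Span\{W_3\},\Span\{Z_1\}\}$, which in the order $\mathcal{B}_\emptyset=\{W_1,Z_3,W_2,Z_2,W_3,Z_1\}$ are adjacent. Vanishing of cross-blocks between inequivalent summands makes $[A]_{\mathcal{B}_\emptyset}$ block-diagonal with three symmetric $2\times2$ blocks, each with positive diagonal $\mu_{2k-1},\mu_{2k}$ and one off-diagonal parameter $a_k$ (equivariant maps between equivalent one-dimensional modules are scalars, and $A$ is self-adjoint); this is \eqref{metric:emptyset}. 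Positive-definiteness of each block is $\mu_{2k-1}\mu_{2k}-a_k^2>0$, equivalently $a_k\in(-\sqrt{\mu_{2k-1}\mu_{2k}},\sqrt{\mu_{2k-1}\mu_{2k}})$; the converse direction is the converse clause of \eqref{positive:matrix}.

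For $\Theta=\{\alpha_1\}$, Proposition \ref{new:submodules}$b)$ gives $\Span\{Z_3\}$ and two equivalent two-dimensional summands $\Span\{W_2,W_3\}$, $\Span\{Z_2,-Z_1\}$ linked by $\tilde T$; taking $\mathcal{B}_1=\{Z_3\}$, $\mathcal{B}_2=\{W_2,W_3\}$, $\mathcal{B}_3=\tilde T(\mathcal{B}_2)=\{Z_2,-Z_1\}$ recovers $\mathcal{B}_{\{\alpha_1\}}$. The summand $\Span\{Z_3\}$ decouples by dimension and contributes $\mu_1>0$; the two-dimensional diagonal blocks are $\mu_2\mathrm{I}_2$, $\mu_3\mathrm{I}_2$ by \eqref{positive:matrix}. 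For the cross-block $A_{32}\colon\Span\{W_2,W_3\}\to\Span\{Z_2,-Z_1\}$: it is equivariant, so $\tilde T^{-1}\circ A_{32}$ is an equivariant endomorphism of $\Span\{W_2,W_3\}$, whose algebra is one-dimensional by Remark \ref{remark:2.3}; hence $A_{32}=a\tilde T$ for some $a\in\mathbb{R}$, and since $\tilde T$ has matrix $\mathrm{I}_2$ in the chosen bases, $A_{32}$ and its transpose equal $a\mathrm{I}_2$. This gives \eqref{metric:alpha_1}, whose eigenvalues are $\mu_1$ together with the two eigenvalues (each of multiplicity two) of the symmetric $2\times2$ matrix with diagonal $\mu_2,\mu_3$ and off-diagonal $a$; positive-definiteness is thus $\mu_1,\mu_2,\mu_3>0$ and $a\in(-\sqrt{\mu_2\mu_3},\sqrt{\mu_2\mu_3})$.

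For $\Theta=\{\alpha_2\}$, the three summands of Proposition \ref{new:submodules}$c)$ are pairwise inequivalent, so all off-diagonal blocks of $[A]_{\mathcal{B}_{\{\alpha_2\}}}$ vanish and $A$ acts as $\mu_1$, $\mu_2\mathrm{I}_2$, $\mu_3\mathrm{I}_2$ on the respective summands — this is \eqref{metric:alpha_2} — with positive-definiteness amounting to $\mu_1,\mu_2,\mu_3>0$. The one genuinely delicate point is the cross-block in the case $\Theta=\{\alpha_1\}$: ruling out a more general equivariant map between the two isometric two-dimensional modules uses exactly that this module is of real/orthogonal type (commutant $\mathbb{R}$), i.e. Remark \ref{remark:2.3}, and one must also confirm that $\tilde T$ is a $(\cdot,\cdot)$-isometry so that the cross-block is $a\mathrm{I}_2$ rather than a scaled rotation. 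Everything else is bookkeeping on top of Proposition \ref{new:submodules} and the discussion in Section \ref{section:2.1}.
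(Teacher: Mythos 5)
Your proposal is correct and follows essentially the same route as the paper's proof: both apply the general block form \eqref{positive:matrix} to the decompositions of Proposition \ref{new:submodules}, use Remark \ref{remark:2.3} to reduce the cross-block in case $b)$ to a scalar multiple of $\tilde{T}$, and translate positive-definiteness of the resulting $2\times 2$ blocks into the stated bounds on $a,a_1,a_2,a_3$. The only cosmetic difference is that you use the determinant criterion for the $2\times 2$ blocks where the paper writes out the eigenvalues explicitly.
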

\begin{proof} 
Assuming $\Theta=\emptyset,$ Proposition \ref{new:submodules} implies that $$\mathfrak{m}_\emptyset=\left(\mathfrak{m}_\emptyset\right)_1\oplus\left(\mathfrak{m}_\emptyset\right)_2\oplus\left(\mathfrak{m}_\emptyset\right)_3\oplus\left(\mathfrak{m}_\emptyset\right)_4\oplus\left(\mathfrak{m}_\emptyset\right)_5\oplus\left(\mathfrak{m}_\emptyset\right)_6,$$ where
\begin{align*}
    \left(\mathfrak{m}_\emptyset\right)_1=\Span\{W_1\},\hspace{1cm} &\left(\mathfrak{m}_\emptyset\right)_2=\Span\{Z_3\},\\
    \left(\mathfrak{m}_\emptyset\right)_3=\Span\{W_2\},\hspace{1cm} &\left(\mathfrak{m}_\emptyset\right)_4=\Span\{Z_2\},\\
    \left(\mathfrak{m}_\emptyset\right)_5=\Span\{W_3\},\hspace{1cm} &\left(\mathfrak{m}_\emptyset\right)_6=\Span\{Z_1\},
\end{align*}
with $\left(\mathfrak{m}_\emptyset\right)_1$ equivalent to $\left(\mathfrak{m}_\emptyset\right)_2,$ $\left(\mathfrak{m}_\emptyset\right)_3$ equivalent to $\left(\mathfrak{m}_\emptyset\right)_4,$ and $\left(\mathfrak{m}_\emptyset\right)_5$ equivalent to $\left(\mathfrak{m}_\emptyset\right)_6$ (as $K_\emptyset$-modules). Define the linear maps
\begin{align*}
    &\left(T_\emptyset\right)_1^2:\left(\mathfrak{m}_\emptyset\right)_1\ni W_1\longmapsto Z_3\in\left(\mathfrak{m}_\emptyset\right)_2,\hspace{0.5cm}\left(T_\emptyset\right)_2^1:\left(\mathfrak{m}_\emptyset\right)_2\ni Z_3\longmapsto W_1\in\left(\mathfrak{m}_\emptyset\right)_1,\\
    &\left(T_\emptyset\right)_3^4:\left(\mathfrak{m}_\emptyset\right)_3\ni W_2\longmapsto Z_2\in\left(\mathfrak{m}_\emptyset\right)_4,\hspace{0.5cm}\left(T_\emptyset\right)_4^3:\left(\mathfrak{m}_\emptyset\right)_4\ni Z_2\longmapsto W_2\in\left(\mathfrak{m}_\emptyset\right)_3,\\
    &\left(T_\emptyset\right)_5^6:\left(\mathfrak{m}_\emptyset\right)_5\ni W_3\longmapsto Z_1\in\left(\mathfrak{m}_\emptyset\right)_6,\hspace{0.5cm}\left(T_\emptyset\right)_6^5:\left(\mathfrak{m}_\emptyset\right)_6\ni Z_1\longmapsto W_3\in\left(\mathfrak{m}_\emptyset\right)_5,\\
    &\left(T_\emptyset\right)_i^i:=\textnormal{I}_{\left(\mathfrak{m}_\emptyset\right)_{i}},\ i=1,2,3,4,5,6,\\
\end{align*} and $\left(T_\emptyset\right)_i^j:=0$ for any other $i,j,$ and consider the $(\cdot,\cdot)$-orthonormal sets $$\mathcal{B}^{\emptyset}_1:=\{W_1\},\ \mathcal{B}^{\emptyset}_2:=\{Z_3\},\ \mathcal{B}^{\emptyset}_3:=\{W_2\},\ \mathcal{B}^{\emptyset}_4:=\{Z_2\},\ \mathcal{B}^{\emptyset}_5:=\{W_3\},\ \mathcal{B}^{\emptyset}_6:=\{Z_1\}.$$ Then, the family $\{\left(T_\emptyset\right)_i^j:i,j\in\{1,2,3,4,5,6\}\}$ satisfies the properties $i)-iv)$ listed in Section \ref{section:2.1}, and $\mathcal{B}_\emptyset=\mathcal{B}^{\emptyset}_1\cup\mathcal{B}^{\emptyset}_2\cup\mathcal{B}^{\emptyset}_3\cup\mathcal{B}^{\emptyset}_4\cup\mathcal{B}^{\emptyset}_5\cup\mathcal{B}^{\emptyset}_6$ is an $(\cdot,\cdot)$-orthonormal basis of $\mathfrak{m}_\emptyset$ such that $\left(T_\emptyset\right)_i^j(\mathcal{B}^{\emptyset}_i)=\mathcal{B}^{\emptyset}_j$ whenever $\left(\mathfrak{m}_\emptyset\right)_{i}$ is equivalent to $\left(\mathfrak{m}_\emptyset\right)_{j}.$ Consequently, by  formula \eqref{positive:matrix}, every metric operator $A:\mathfrak{m}_\emptyset\rightarrow\mathfrak{m}_\emptyset$ is determined by a positive numbers $\mu_1,...,\mu_6$ and real number $a_1,a_2,a_3$ through the relation \begin{equation*}
\left[A\right]_{\mathcal{B}_\emptyset}=\left(\begin{array}{cccccc}\mu_1 & a_1 & 0 & 0 & 0 & 0\\
		a_1 & \mu_2 & 0 & 0 & 0 & 0\\
		0 & 0 & \mu_3 & a_2 & 0 & 0\\
		0 & 0 & a_2 & \mu_4 & 0 & 0\\
		0 & 0 & 0 & 0 & \mu_5 & a_3\\
		0 & 0 & 0 & 0 & a_3 & \mu_6
		\end{array}\right),
		\end{equation*}
  where this matrix is positive definite. Observe that the eigenvalues of this matrix are \begin{align*}
  &\frac{\mu_1+\mu_2\pm\sqrt{(\mu_1-\mu_2)^2+4a_1^2}}{2},\\
  &\frac{\mu_3+\mu_4\pm\sqrt{(\mu_3-\mu_4)^2+4a_2^2}}{2},\\ &\frac{\mu_5+\mu_6\pm\sqrt{(\mu_5-\mu_6)^2+4a_3^2}}{2}.
  \end{align*} Thus, the matrix is positive definite if and only if \begin{align*}
  &\mu_1+\mu_2\pm\sqrt{(\mu_1-\mu_2)^2+4a_1^2}>0,\\
  &\mu_3+\mu_4\pm\sqrt{(\mu_3-\mu_4)^2+4a_2^2}>0,\ \text{and}\\ &\mu_5+\mu_6\pm\sqrt{(\mu_5-\mu_6)^2+4a_3^2}>0,
  \end{align*} which is equivalent to $$|a_1|<\sqrt{\mu_1\mu_2},\ |a_2|<\sqrt{\mu_3\mu_4},\ \textnormal{and}\ |a_3|<\sqrt{\mu_5\mu_6}.$$ This proves $a).$\\

  For the proof of $b),$ we can proceed analogously to the proof of $a):$ let $\Theta=\{\alpha_1\},$ and consider $$\left(\mathfrak{m}_{\{\alpha_1\}}\right)_1:=\Span\{Z_3\},\ \left(\mathfrak{m}_{\{\alpha_1\}}\right)_2:=\Span\{W_2,W_3\},\ \textnormal{and}\ \left(\mathfrak{m}_{\{\alpha_1\}}\right)_3:=\Span\{Z_2,-Z_1\}.$$ Then, by Proposition \ref{new:submodules}, we have that $$\mathfrak{m}_{\{\alpha_1\}}=\left(\mathfrak{m}_{\{\alpha_1\}}\right)_1\oplus\left(\mathfrak{m}_{\{\alpha_1\}}\right)_2\oplus\left(\mathfrak{m}_{\{\alpha_1\}}\right)_3,$$ where $\left(\mathfrak{m}_{\{\alpha_1\}}\right)_2$ is equivalent to $\left(\mathfrak{m}_{\{\alpha_1\}}\right)_3,$ and $$\begin{array}{rccc}
  \tilde{T}:&\left(\mathfrak{m}_{\{\alpha_1\}}\right)_2&\longrightarrow&\left(\mathfrak{m}_{\{\alpha_1\}}\right)_3\\
  & W_2 & \longmapsto & Z_2\\
  & W_3 & \longmapsto & -Z_1
  \end{array}$$ is an intertwining isomorphism. In this case, we define \begin{equation*}
    \left(T_{\{\alpha_1\}}\right)_2^3:=\tilde{T},\hspace{0.5cm}\left(T_{\{\alpha_1\}}\right)_3^2:=\tilde{T}^{-1},\ \left(T_{\{\alpha_1\}}\right)_i^i:=\textnormal{I}_{\left(\mathfrak{m}_{\{\alpha_1\}}\right)_{i}},\ i=1,2,3,
\end{equation*}
$\left(T_{\{\alpha_1\}}\right)_i^j:=0$ for any other $i,j,$ and $$\mathcal{B}^{\{\alpha_1\}}_1:=\{Z_3\},\ \mathcal{B}^{\{\alpha_1\}}_2:=\{W_2,W_3\},\ \mathcal{B}^{\{\alpha_1\}}_3:=\{Z_2,-Z_1\}.$$ Again, the family $\{\left(T_{\{\alpha_1\}}\right)_i^j:i,j\in\{1,2,3\}\}$ satisfies the conditions $i)-iv)$ of Section \ref{section:2.1}, and $$\mathcal{B}_{\{\alpha_1\}}=\mathcal{B}^{\{\alpha_1\}}_1\cup\mathcal{B}^{\{\alpha_1\}}_2\cup\mathcal{B}^{\{\alpha_1\}}_3$$ is an $(\cdot,\cdot)$-orthonormal basis of $\mathfrak{m}_{\{\alpha_1\}}$ such that is an $(\cdot,\cdot)$-orthonormal basis of $\mathfrak{m}_\emptyset$ such that $\left(T_\emptyset\right)_i^j\left(\mathcal{B}^{\{\alpha_1\}}_i\right)=\mathcal{B}^{\{\alpha_1\}}_j$ whenever $\left(\mathfrak{m}_\emptyset\right)_{i}$ is equivalent to $\left(\mathfrak{m}_\emptyset\right)_{j}.$  Thus, by formula \eqref{positive:matrix}, any metric operator $A:\mathfrak{m}_{\{\alpha_1\}}\rightarrow\mathfrak{m}_{\{\alpha_1\}}$ is determined by positive numbers $\mu_1,\mu_2,\mu_3$ and real numbers $a,b,c,d$ through the formula 
\begin{equation*}
		\left[A\right]_{\mathcal{B}_{\{\alpha_1\}}}=\left(\begin{array}{ccccc}\mu_1 & 0 & 0 & 0 & 0\\
		0 & \mu_2 & 0 & a & c \\
		0 & 0 & \mu_2 & b & d \\
		0 & a & b & \mu_3 & 0 \\
		0 & c & d & 0 & \mu_3  \\
		\end{array}\right),
		\end{equation*}
    where the matrix is positive definite. The submatrix $$\left(\begin{array}{cc}a & b \\ c & d \end{array}\right)$$ defines the map $$\begin{array}{rccc}
    A_{32}:&\left(\mathfrak{m}_{\{\alpha_1\}}\right)_2&\longrightarrow& \left(\mathfrak{m}_{\{\alpha_1\}}\right)_3\\
     & W_2 & \longmapsto & aZ_2-cZ_1\\
     & W_3 & \longmapsto & bZ_2-dZ_1,
     \end{array}$$ and it is an equivariant map. Hence, since $\left(\mathfrak{m}_{\{\alpha_1\}}\right)_2$ is an orthogonal $K_{\{\alpha_1\}}$-module (see Remark \ref{remark:2.3}), then $A_{32}$ must be a multiple of $\tilde{T}.$ This implies that there exists $\lambda\in \mathbb{R}$ such that $$A_{32}(W_2)=\lambda\tilde{T}(W_2),\ \textnormal{and}\ A_{32}(W_3)=\lambda\tilde{T}(W_3),$$ or, equivalently, $$aZ_2-cZ_1=\lambda Z_2,\ \textnormal{and}\ bZ_2-dZ_1=-\lambda Z_1,$$ that is, $a=d=\lambda,$ and $b=c=0.$ Therefore, 
     \begin{equation*}
		\left[A\right]_{\mathcal{B}_{\{\alpha_1\}}}=\left(\begin{array}{ccccc}\mu_1 & 0 & 0 & 0 & 0\\
		0 & \mu_2 & 0 & a & 0 \\
		0 & 0 & \mu_2 & 0 & a \\
		0 & a & 0 & \mu_3 & 0 \\
		0 & 0 & a & 0 & \mu_3  \\
		\end{array}\right).
	\end{equation*}
    By computing the eigenvalues of this matrix, we obtain
    $$\mu_1,\ \textnormal{and}\ \frac{\mu_2+\mu_3\pm\sqrt{(\mu_2-\mu_3)^2+4a^2}}{2}.$$ Thus, it is positive definite if and only if $|a|<\sqrt{\mu_2\mu_3}.$ We have proven $b).$\\

    For $\Theta=\{\alpha_2\}$, Proposition \ref{new:submodules} gives us the decomposition $$\mathfrak{m}_{\{\alpha_2\}}=\left(\mathfrak{m}_{\{\alpha_2\}}\right)_1\oplus\left(\mathfrak{m}_{\{\alpha_2\}}\right)_2\oplus\left(\mathfrak{m}_{\{\alpha_2\}}\right)_3,$$ where 
    \begin{align*}
    &\left(\mathfrak{m}_{\{\alpha_2\}}\right)_1=\Span\left\{\frac{\sqrt{13}W_2+2Z_2}{\sqrt{17}}\right\},\\
    \\
    &\left(\mathfrak{m}_{\{\alpha_2\}}\right)_2=\Span\left\{\frac{W_1+Z_3}{\sqrt{2}},\frac{W_3+Z_1}{\sqrt{2}}\right\}\\
    \\
    &\left(\mathfrak{m}_{\{\alpha_2\}}\right)_3=\Span\left\{\frac{W_1-Z_3}{\sqrt{2}},\frac{W_3-Z_1}{\sqrt{2}}\right\},
    \end{align*}
    and all of them are not equivalent. In this case, since there are not nonzero equivariant maps between these submodules, we have that any metric operator $A:\mathfrak{m}_{\{\alpha_2\}}\rightarrow\mathfrak{m}_{\{\alpha_2\}}$ is determined by positive numbers $\mu_1,\mu_2,\mu_3$ such that $$A\big{|}_{\left(\mathfrak{m}_{\{\alpha_2\}}\right)_i}=\mu_i\textnormal{I}_{\left(\mathfrak{m}_{\{\alpha_2\}}\right)_i},\ i=1,2,3.$$  The sets 
    \begin{align*}
        &\mathcal{B}^{\{\alpha_2\}}_1:=\left\{\frac{\sqrt{13}W_2+2Z_2}{\sqrt{17}}\right\},\\
        \\
        &\mathcal{B}^{\{\alpha_2\}}_2:=\left\{\frac{W_1+Z_3}{\sqrt{2}},\frac{W_3+Z_1}{\sqrt{2}}\right\},\\
        \\
        &\mathcal{B}^{\{\alpha_2\}}_3:=\left\{\frac{W_1-Z_3}{\sqrt{2}},\frac{W_3-Z_1}{\sqrt{2}}\right\}
        \end{align*} are $(\cdot,\cdot)$-orthonormal bases of $\left(\mathfrak{m}_{\{\alpha_2\}}\right)_1,\ \left(\mathfrak{m}_{\{\alpha_2\}}\right)_2,\ \left(\mathfrak{m}_{\{\alpha_2\}}\right)_3$ respectively. Thus, $A$ can be written in the basis $\mathcal{B}_{\{\alpha_2\}}=\mathcal{B}^{\{\alpha_2\}}_1\cup\mathcal{B}^{\{\alpha_2\}}_2\cup\mathcal{B}^{\{\alpha_2\}}_3$ as 
        \begin{equation*}
		\left[A\right]_{\mathcal{B}_{\{\alpha_2\}}}=\left(\begin{array}{ccccc}\mu_1 & 0 & 0 & 0 & 0\\
		0 & \mu_2 & 0 & 0 & 0 \\
		0 & 0 & \mu_2 & 0 & 0 \\
		0 & 0 & 0 & \mu_3 & 0 \\
		0 & 0 & 0 & 0 & \mu_3  \\
		\end{array}\right).
		\end{equation*}
    The proof is complete.
\end{proof}

\subsection{G.o. metrics} This section is dedicated to classify the $K$-invariant metrics on flag manifolds of $\mathfrak{g}_2$ that are g.o. metrics.

\begin{definition} Let $G$ be a compact Lie group, $H$ a closed subgroup of $G,$ and $g$ a $G$-invariant metric on the homogeneous space $G/H.$  A smooth curve $\gamma$ on $G/H$ is called {\it homogeneous} if it is the orbit of a one-parameter subgroup of $G,$ that is, there exists $X$ in the Lie algebra $\mathfrak{g}$ of $G$ such that $$\gamma(t)=\exp(tX)H,$$ for all $t$ in the domain of $\gamma.$ If, in addition, the homogeneous curve $\gamma$ is a geodesic on $(G/H,g),$ then we say that $\gamma$ is a {\it homogeneous geodesic} with respect to $g.$ In such a case, the vector $X\in\mathfrak{g}$ is called a {\it geodesic vector}.
\end{definition}

\begin{definition} A $G$-invariant metric on a homogeneous space $G/H$ is a {\it g.o. metric} all geodesics on $(G/H,g)$ starting at $eH$ are homogeneous geodesics. 
\end{definition}

With the notations of Section \ref{section:2.1}, we recall that the set of $G$-invariant metrics $g$ on a homogeneous space $G/H$ are in bijective correspondence with the set of $\Ad(H)$-invariant inner products $\langle\cdot,\cdot\rangle$ on $\mathfrak{m}.$ So, in what follows we will write $g$ or $\langle\cdot,\cdot\rangle$ interchangeably to refer to either a $G$-invariant metric on $G/H$ or an $\Ad(H)$-invariant inner product on $\mathfrak{m}.$ \\

The following Proposition was proved by Souris in \cite{N}. It provides a helpful tool to determine whether a $G$-invariant metric is a g.o. metric.

\begin{proposition}\label{Souris:1} Let $\langle\cdot,\cdot\rangle$ be a $\Ad(H)$-invariant inner product on a homogeneous space $G/H,$ and let $A$ be the metric operator associated with $\langle\cdot,\cdot\rangle.$ Then $\langle\cdot,\cdot\rangle$ is a g.o. metric if and only if for all $X\in\mathfrak{m},$ there exist a vector $Z\in\mathfrak{h}$ such that 
\begin{equation}\label{Souris:criteria}
[Z+X,AX]=0.
\end{equation} 
\end{proposition}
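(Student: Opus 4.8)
\emph{Plan of proof.} The plan is to reduce Proposition \ref{Souris:1} to the classical geodesic-vector criterion and then convert the resulting orthogonality condition into the bracket identity \eqref{Souris:criteria} by using that $(\cdot,\cdot)$ is bi-invariant and that $A$ is $(\cdot,\cdot)$-self-adjoint. First I would recall that, under the reductive identification $T_{eH}(G/H)\cong\mathfrak{m}$, the curve $t\mapsto\exp(tY)H$ with $Y\in\mathfrak{g}$ has initial velocity equal to the $\mathfrak{m}$-component $Y_{\mathfrak{m}}$ of $Y$. Hence a geodesic issuing from $eH$ with initial velocity $X\in\mathfrak{m}$ is homogeneous if and only if there is $Y\in\mathfrak{g}$ with $Y_{\mathfrak{m}}=X$, i.e.\ $Y=Z+X$ with $Z\in\mathfrak{h}$, such that $\exp(tY)H$ is a geodesic; that is, $Z+X$ is a geodesic vector. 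Since by existence and uniqueness every geodesic through $eH$ arises in this way for a unique $X\in\mathfrak{m}$, the metric $\langle\cdot,\cdot\rangle$ is a g.o.\ metric if and only if for every $X\in\mathfrak{m}$ there exists $Z\in\mathfrak{h}$ such that $Z+X$ is a geodesic vector.

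Next I would invoke the geodesic-vector criterion of Kowalski--Vanhecke (as used in \cite{N}): $Y\in\mathfrak{g}$ is a geodesic vector if and only if $\langle[Y,W]_{\mathfrak{m}},Y_{\mathfrak{m}}\rangle=0$ for all $W\in\mathfrak{m}$. Applying this to $Y=Z+X$ (so $Y_{\mathfrak{m}}=X$), the condition becomes $\langle[Z+X,W]_{\mathfrak{m}},X\rangle=0$ for all $W\in\mathfrak{m}$. Using $\langle\cdot,\cdot\rangle=(A\cdot,\cdot)$, the self-adjointness of $A$, the orthogonality $\mathfrak{h}\perp\mathfrak{m}$ with respect to $(\cdot,\cdot)$ (which allows dropping the $\mathfrak{m}$-projection, since $AX\in\mathfrak{m}$), and finally the $\operatorname{ad}$-skew-symmetry of the bi-invariant form $(\cdot,\cdot)$, I would rewrite
\[
\langle[Z+X,W]_{\mathfrak{m}},X\rangle=([Z+X,W]_{\mathfrak{m}},AX)=([Z+X,W],AX)=-(W,[Z+X,AX]).
\]
Thus the geodesic-vector condition is equivalent to $(W,[Z+X,AX])=0$ for all $W\in\mathfrak{m}$, i.e.\ to the vanishing of the $\mathfrak{m}$-component of $[Z+X,AX]$.

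The only delicate point — and the step I expect to be the main (small) obstacle — is to upgrade ``$[Z+X,AX]_{\mathfrak{m}}=0$'' to ``$[Z+X,AX]=0$'', i.e.\ to check that $[Z+X,AX]$ lies in $\mathfrak{m}$ in the first place. Since $[Z,AX]\in\mathfrak{m}$ by reductivity ($[\mathfrak{h},\mathfrak{m}]\subseteq\mathfrak{m}$), it suffices to show $[X,AX]_{\mathfrak{h}}=0$. For $U\in\mathfrak{h}$, $\operatorname{ad}$-skew-symmetry gives $([X,AX],U)=-(AX,[X,U])=-\langle X,[X,U]\rangle$; and differentiating the constant function $t\mapsto\langle\Ad(\exp(tU))X,\Ad(\exp(tU))X\rangle$ (constant by $\Ad(H)$-invariance of $\langle\cdot,\cdot\rangle$) at $t=0$ yields $\langle[U,X],X\rangle=0$, hence $\langle X,[X,U]\rangle=0$. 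Therefore $([X,AX],U)=0$ for all $U\in\mathfrak{h}$, so $[X,AX]\in\mathfrak{m}$ and consequently $[Z+X,AX]\in\mathfrak{m}$. Combining the three steps, $\langle\cdot,\cdot\rangle$ is a g.o.\ metric if and only if for every $X\in\mathfrak{m}$ there is $Z\in\mathfrak{h}$ with $[Z+X,AX]=0$, which is exactly \eqref{Souris:criteria}. The substantive input beyond the classical criterion is the observation that $[X,AX]\perp\mathfrak{h}$ (where $\Ad(H)$-invariance of the metric enters); apart from that the argument is bookkeeping with the $\mathfrak{h}\oplus\mathfrak{m}$ projections and signs, and since the result is due to \cite{N} I would present it as a sketch and refer there for the full details.
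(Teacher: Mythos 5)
Your argument is correct, but note that the paper itself does not prove Proposition \ref{Souris:1}: it is quoted from Souris \cite{N} without proof, so there is no in-paper argument to compare against. Your sketch — reduction to the geodesic-vector (Kowalski--Vanhecke) criterion, conversion of $\langle[Z+X,W]_{\mathfrak{m}},X\rangle=0$ for all $W\in\mathfrak{m}$ into $[Z+X,AX]_{\mathfrak{m}}=0$ via the self-adjointness of $A$ and the $\operatorname{ad}$-invariance of $(\cdot,\cdot)$, and the final upgrade to $[Z+X,AX]=0$ using reductivity together with $[X,AX]\perp\mathfrak{h}$ (a consequence of the infinitesimal $\Ad(H)$-invariance of $\langle\cdot,\cdot\rangle$) — is the standard derivation and matches the one in the cited reference; each step checks out.
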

\begin{theorem}\label{g.o.metrics:theorem} Let $\mathbb{F}_\Theta$ be a flag of $\mathfrak{g}_2,$ $\langle\cdot,\cdot\rangle$ an $\Ad(K_\Theta)$-invariant inner product on $\mathfrak{m}_\Theta,$ and $A$ its associated metric operator. Let 
\begin{align*}
    &\mathcal{B}_\emptyset=\{W_1,Z_3,W_2,Z_2,W_3,Z_1\}\\
    &\mathcal{B}_{\{\alpha_1\}}=\{Z_3,W_2,W_3,Z_2,-Z_1\},\ \text{and}\\
    &\mathcal{B}_{\{\alpha_2\}}=\left\{\frac{\sqrt{13}W_2+2Z_2}{\sqrt{17}},\frac{W_1+Z_3}{\sqrt{2}},\frac{W_3+Z_1}{\sqrt{2}},\frac{W_1-Z_3}{\sqrt{2}},\frac{W_3-Z_1}{\sqrt{2}}\right\}.
\end{align*}
\begin{itemize}
    \item[$a)$] If $\Theta=\emptyset,$ then $\langle\cdot,\cdot\rangle$ is a g.o. metric if and only if there exist $\mu>0,$ and $a\in(-\mu,\mu)$ such that \begin{equation}\label{g.o.metric:emptyset}
\left[A\right]_{\mathcal{B}_\emptyset}=\left(\begin{array}{cccccc}\mu & a & 0 & 0 & 0 & 0\\
		a & \mu & 0 & 0 & 0 & 0\\
		0 & 0 & \mu & -a & 0 & 0\\
		0 & 0 & -a & \mu & 0 & 0\\
		0 & 0 & 0 & 0 & \mu & a\\
		0 & 0 & 0 & 0 & a & \mu
		\end{array}\right).
		\end{equation}
    \item[$b)$] If $\Theta=\{\alpha_1\},$ then $\langle\cdot,\cdot\rangle$ is a g.o. metric if and only if there exist $\mu>\mu_1>0$ such that \begin{equation}\label{g.o.metric:alpha_1}
\left[A\right]_{\mathcal{B}_{\{\alpha_1\}}}=\left(\begin{array}{ccccc}\mu_1 & 0 & 0 & 0 & 0 \\
		0 & \mu & 0 & a & 0 \\
		0 & 0 & \mu & 0 & a\\
		0 & a & 0 & \mu & 0 \\
		0 & 0 & a & 0 & \mu
		\end{array}\right),\ \text{where}\ a^2=\mu(\mu-\mu_1).
		\end{equation}
    \item[$c)$] If $\Theta=\{\alpha_2\},$ then $\langle\cdot,\cdot\rangle$ is a g.o. metric if and only if there exist $\mu_1,\mu_2,\mu_3>0$ such that
    \begin{equation*}
		\left[A\right]_{\mathcal{B}_{\{\alpha_2\}}}=\left(\begin{array}{ccccc}\mu_1 & 0 & 0 & 0 & 0\\
		0 & \mu_2 & 0 & 0 & 0 \\
		0 & 0 & \mu_2 & 0 & 0 \\
		0 & 0 & 0 & \mu_3 & 0 \\
		0 & 0 & 0 & 0 & \mu_3  \\
		\end{array}\right),
  \end{equation*}
  where $\mu_1=\frac{34\mu_2\mu_3}{(2+\sqrt{13})^2\mu_2+(2-\sqrt{13})^2\mu_3}.$
\end{itemize}
\end{theorem}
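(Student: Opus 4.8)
The plan is to apply Souris' criterion (Proposition~\ref{Souris:1}) flag by flag, feeding it the normal forms for the metric operators from Theorem~\ref{metrics:theorem}, the isotropy decompositions of Proposition~\ref{new:submodules}, and the structure constants~\eqref{brackets:1}. A first bookkeeping step is to identify the isotropy subalgebras in the $W,Z$ basis: one has $\mathfrak{k}_\emptyset=\{0\}$; $\mathfrak{k}_{\{\alpha_1\}}=\Span\{W_1\}$ since $X_1=2W_1$; and $\mathfrak{k}_{\{\alpha_2\}}=\Span\{\sqrt{13}\,Z_2-2W_2\}$, obtained by inverting $Z_2=\tfrac{3}{2\sqrt{13}}Y_2+\tfrac{1}{\sqrt{13}}X_2$ to express the generator $Y_2$ (and note that $\sqrt{13}\,Z_2-2W_2$ is $(\cdot,\cdot)$-orthogonal to the summand $(\mathfrak{m}_{\{\alpha_2\}})_1$ of Proposition~\ref{new:submodules}). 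Hence, for $\Theta=\emptyset$, condition~\eqref{Souris:criteria} reads simply ``$[X,AX]=0$ for every $X\in\mathfrak{k}$'', while for $\Theta=\{\alpha_1\},\{\alpha_2\}$ it asks that for each $X\in\mathfrak{m}_\Theta$ a single real scalar $z=z(X)$ be solvable with $[z\,Z_0+X,AX]=0$, where $Z_0$ is the generator of $\mathfrak{k}_\Theta$ just described.

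For each $\Theta$ I would then write a generic $X$ in the orthonormal basis $\mathcal{B}_\Theta$, read $AX$ off the matrix in Theorem~\ref{metrics:theorem}, and expand $[z\,Z_0+X,\,AX]$ back into $\mathcal{B}_\Theta$ using~\eqref{brackets:1} (noting that $[W_1,Z_3]=[W_2,Z_2]=[W_3,Z_1]=0$); the result is a $\mathfrak{k}$-valued expression, homogeneous quadratic in the coordinates of $X$ and affine in $z$. For the \emph{necessity} direction I would evaluate it on a short list of test vectors. For $\Theta=\emptyset$ (where $z$ is absent): the vectors $W_i+W_j$ and $Z_i+Z_j$ force $\mu_1=\mu_3=\mu_5$ and $\mu_2=\mu_4=\mu_6$, the vector $W_1+Z_1$ forces these two common values to coincide to a single $\mu$, and finally $W_1+W_2$ and $W_2+W_3$ force $a_2=-a_1$ and $a_3=a_1$, which is exactly~\eqref{g.o.metric:emptyset}. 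For $\Theta=\{\alpha_1\}$: test vectors inside $(\mathfrak{m}_{\{\alpha_1\}})_2\oplus(\mathfrak{m}_{\{\alpha_1\}})_3$, and vectors mixing these summands with $\Span\{Z_3\}$, force $\mu_2=\mu_3=:\mu$ and, after eliminating $z$, the relation $a^2=\mu(\mu-\mu_1)$ of~\eqref{g.o.metric:alpha_1}. For $\Theta=\{\alpha_2\}$ (where the metric is already diagonal): testing on $u+v$ with $u\in(\mathfrak{m}_{\{\alpha_2\}})_1$ and $v$ in either two-dimensional summand, and on sums of vectors from the two $2$-dimensional summands, forces, after eliminating $z$, the single relation $\mu_1=\dfrac{34\mu_2\mu_3}{(2+\sqrt{13})^2\mu_2+(2-\sqrt{13})^2\mu_3}$. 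For \emph{sufficiency} I would plug the candidate operator back in and verify the identity for all $X$: when $\Theta=\emptyset$ this reduces, by polarization, to checking that $(X,Y)\mapsto[X,AY]+[Y,AX]$ vanishes on every pair of vectors of $\mathcal{B}_\emptyset$; for the other two flags one solves the linear equation for $z(X)$ explicitly and checks that the remaining components of $[z(X)Z_0+X,AX]$ vanish, using $\mu_2=\mu_3$, resp.\ the displayed relation for $\mu_1$.

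The main obstacle is the sufficiency step, and within it the $\{\alpha_2\}$ case. For necessity a handful of well-chosen test vectors suffice, but sufficiency demands the Souris identity for \emph{every} $X\in\mathfrak{m}_\Theta$; in the one-dimensional-isotropy cases this forces one to isolate the coefficient of $z$ and solve for it cleanly, and for $\Theta=\{\alpha_2\}$ the computation carries the irrational constants $\sqrt{13}$ and $\sqrt{17}$ coming from $\mathcal{B}_{\{\alpha_2\}}$ and from the generator $\sqrt{13}\,Z_2-2W_2$ of $\mathfrak{k}_{\{\alpha_2\}}$. A convenient device here is to record $(2\pm\sqrt{13})^2=17\pm4\sqrt{13}$ and $(2+\sqrt{13})^2+(2-\sqrt{13})^2=34$, so that the relation for $\mu_1$ can be verified in the cleared form $\big((17+4\sqrt{13})\mu_2+(17-4\sqrt{13})\mu_3\big)\mu_1=34\mu_2\mu_3$. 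Finally, the positivity bounds of Theorem~\ref{metrics:theorem} are automatically respected by the metrics produced --- e.g.\ $a^2=\mu(\mu-\mu_1)<\mu^2$ once $\mu>\mu_1>0$ --- so no invariant metric is lost in passing from the normal form to the g.o.\ subfamily.
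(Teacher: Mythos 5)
Your overall strategy is the paper's own: apply Souris' criterion (Proposition \ref{Souris:1}) to the normal forms of Theorem \ref{metrics:theorem}, with the correctly identified generators $\mathfrak{k}_\emptyset=\{0\}$, $\mathfrak{k}_{\{\alpha_1\}}=\Span\{W_1\}$, $\mathfrak{k}_{\{\alpha_2\}}=\Span\{\sqrt{13}Z_2-2W_2\}$, proving necessity on test vectors and sufficiency by exhibiting $z(X)$ for a general $X$. (The paper actually cites \cite[Proposition 4.6]{GGN} for $a)$ and $b)$ and only writes out $c)$, but the method is the same; your polarization remark for the $\Theta=\emptyset$ sufficiency and your test vectors for $a)$ and $b)$ all check out against the structure constants \eqref{brackets:1}.)

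There is one concrete flaw in your necessity argument for $c)$: the test vectors you list do not force the relation on $\mu_1$. For $X=u+v$ with $u\in(\mathfrak{m}_{\{\alpha_2\}})_1$ and $v$ in a single two-dimensional summand, the Souris equation reduces to a single linear equation in $z$ with nonzero coefficient (e.g.\ $z\mu_2\tfrac{\sqrt{13}+2}{2}+(\mu_2-\mu_1)\tfrac{2-\sqrt{13}}{2}=0$ for $v=W_1+Z_3$), which is solvable for \emph{any} $\mu_1,\mu_2>0$; and for $X=v+v'$ with $v,v'$ in the two different two-dimensional summands one gets $[v,Av']+[v',Av]=0$ automatically (since $[W_1+Z_3,W_1-Z_3]=0$, etc.), so the equation only forces $z=0$ and again constrains nothing. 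The two values of $z$ you would "eliminate" come from two \emph{different} vectors $X$ and need not coincide. The relation $\mu_1=\frac{34\mu_2\mu_3}{(2+\sqrt{13})^2\mu_2+(2-\sqrt{13})^2\mu_3}$ only appears when a single $X$ has nonzero components in $(\mathfrak{m}_{\{\alpha_2\}})_1$ \emph{and in both} two-dimensional summands simultaneously, so that the same $z$ must satisfy two incompatible-unless-constrained linear equations; this is exactly why the paper tests on $X=\sqrt{13}W_2+2Z_2+W_1+Z_3+W_3-Z_1$. The fix is trivial (replace your test vectors by such a three-component one), and the rest of your plan, including the sufficiency verification with $z(X)$ proportional to the coordinate of $X$ along $(\mathfrak{m}_{\{\alpha_2\}})_1$, goes through as in the paper.
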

\begin{proof} The proof of $a)$ and $b)$ can be found in \cite[Proposition 4.6]{GGN}. For the proof of $c),$ let $\Theta=\{\alpha_2\}.$ Then $$\mathfrak{k}_{\{\alpha_2\}}=\Span\{Y_2\}=\Span\{\sqrt{13}Z_2-2W_2\}.$$ In this case, Proposition \ref{Souris:1} says that $A$ is a g.o. metric if and only if for each $X\in\mathfrak{m}_{\{\alpha_2\}},$ there exists $\lambda\in\mathbb{R}$ such that \begin{equation}\label{auxiliar:2}[\lambda\left(\sqrt{13}Z_2-2W_2\right)+X,AX].
\end{equation} If a metric $A$ defined as $$\left[A\right]_{\mathcal{B}_{\{\alpha_2\}}}=\left(\begin{array}{ccccc}\mu_1 & 0 & 0 & 0 & 0\\
		0 & \mu_2 & 0 & 0 & 0 \\
		0 & 0 & \mu_2 & 0 & 0 \\
		0 & 0 & 0 & \mu_3 & 0 \\
		0 & 0 & 0 & 0 & \mu_3  \\
		\end{array}\right),$$
is a g.o. metric, then for $X=\sqrt{13}W_2+2Z_2+W_1+Z_3+W_3-Z_1$ there exists $\lambda\in\mathbb{R}$ such that the equation \eqref{auxiliar:2} is satisfied. Observe that
\begin{align*}
    A(\sqrt{13}W_2+2Z_2+W_1+Z_3)=\sqrt{13}\mu_1W_2+2\mu_1Z_2+\mu_2(W_1+Z_3)+\mu_3(W_3-Z_1)
\end{align*}
so that
\begin{align*}
    0=&[\lambda\left(\sqrt{13}Z_2-2W_2\right)+X,AX]\\
    =&\lambda[\sqrt{13}Z_2-2W_2,AX]+[X,AX]\\
    =&\lambda\{\sqrt{13}\mu_2[Z_2,W_1]+\sqrt{13}\mu_2[Z_2,Z_3]+\sqrt{13}\mu_3[Z_2,W_3]-\sqrt{13}\mu_3[Z_2,Z_1]\\
    &-2\mu_2[W_2,W_1]-2\mu_2[W_2,Z_3]-2\mu_3[W_2,W_3]+2\mu_3[W_2,Z_1]\}+\sqrt{13}\mu_2[W_2,W_1]\\
    &+\sqrt{13}\mu_2[W_2,Z_3]+\sqrt{13}\mu_3[W_2,W_3]-\sqrt{13}\mu_3[W_2,Z_1]+2\mu_2[Z_2,W_1]\\
    &+2\mu_2[Z_2,Z_3]+2\mu_3[Z_2,W_3]-2\mu_3[Z_2,Z_1]+\sqrt{13}\mu_1[W_1,W_2]+2\mu_1[W_1,Z_2]\\
    &+\mu_3[W_1,W_3]-\mu_3[W_1,Z_1]+\sqrt{13}\mu_1[Z_3,W_2]+2\mu_1[Z_3,Z_2]+\mu_3[Z_3,W_3]\\
    &-\mu_3[Z_3,Z_1]+\sqrt{13}\mu_1[W_3,W_2]+2\mu_1[W_3,Z_2]+\mu_2[W_3,W_1]+\mu_2[W_3,Z_3]\\
    &-\sqrt{13}\mu_1[Z_1,W_2]-2\mu_1[Z_1,Z_2]-\mu_2[Z_1,W_1]-\mu_2[Z_1,Z_3]\\
    =&\frac{\lambda}{2}\{\sqrt{13}\mu_2Z_1+\sqrt{13}\mu_2W_3-\sqrt{13}\mu_3Z_3+\sqrt{13}\mu_3W_1+2\mu_2W_3+2\mu_2Z_1-2\mu_3W_1\\
    &+2\mu_3Z_3\}-\frac{\sqrt{13}\mu_2}{2}W_3-\frac{\sqrt{13}\mu_2}{2}Z_1+\frac{\sqrt{13}\mu_3}{2}W_1-\frac{\sqrt{13}\mu_3}{2}Z_3+\frac{2\mu_2}{2}Z_1+\frac{2\mu_2}{2}W_3\\
    &-\frac{2\mu_3}{2}Z_3+\frac{2\mu_3}{2}W_1+\frac{\sqrt{13}\mu_1}{2}W_3-\frac{2\mu_1}{2}Z_1-\frac{\mu_3}{2}W_2-\frac{\mu_3}{2}Z_2+\frac{\sqrt{13}\mu_1}{2}Z_1-\frac{2\mu_1}{2}W_3\\
    &+\frac{\mu_3}{2}Z_2+\frac{\mu_3}{2}W_2-\frac{\sqrt{13}\mu_1}{2}W_1+\frac{2\mu_1}{2}Z_3+\frac{\mu_2}{2}W_2-\frac{\mu_2}{2}Z_2+\frac{\sqrt{13}\mu_1}{2}Z_3-\frac{2\mu_1}{2}W_1\\
    &+\frac{\mu_2}{2}Z_2-\frac{\mu_2}{2}W_2\\
    =&\frac{-\lambda(2-\sqrt{13})\mu_3+(2+\sqrt{13})\mu_3-(2+\sqrt{13})\mu_1}{2}W_1\\
    &+\frac{\lambda(2+\sqrt{13})\mu_2+(2-\sqrt{13})\mu_2-(2-\sqrt{13})\mu_1}{2}W_3\\
    &+\frac{\lambda(2+\sqrt{13})\mu_2+(2-\sqrt{13})\mu_2-(2-\sqrt{13})\mu_1}{2}Z_1\\
    &+\frac{\lambda(2-\sqrt{13})\mu_3-(2+\sqrt{13})\mu_3+(2+\sqrt{13})\mu_1}{2}Z_3.
\end{align*}
This implies
\begin{align*}
    &\left\{\begin{array}{l}\lambda(2+\sqrt{13})\mu_2+(2-\sqrt{13})\mu_2-(2-\sqrt{13})\mu_1=0\\
    \lambda(2-\sqrt{13})\mu_3-(2+\sqrt{13})\mu_3+(2+\sqrt{13})\mu_1=0
    \end{array}\right.\\
    \Longrightarrow&\left\{\begin{array}{l}\lambda(2+\sqrt{13})(2-\sqrt{13})\mu_2\mu_3+(2-\sqrt{13})^2\mu_2\mu_3-(2-\sqrt{13})^2\mu_1\mu_3=0\\
    \lambda(2+\sqrt{13})(2-\sqrt{13})\mu_2\mu_3-(2+\sqrt{13})^2\mu_2\mu_3+(2+\sqrt{13})^2\mu_1\mu_2=0
    \end{array}\right.\\
    \Longrightarrow&\left((2-\sqrt{13})^2+(2+\sqrt{13})^2\right)\mu_2\mu_3-\left((2+\sqrt{13})^2\mu_2+(2-\sqrt{13})^2\mu_3\right)\mu_1=0\\
    \Longrightarrow&\mu_1=\frac{34\mu_2\mu_3}{(2+\sqrt{13})^2\mu_2+(2-\sqrt{13})^2\mu_3}.    
\end{align*}
Conversely, if $\mu_1=\frac{34\mu_2\mu_3}{(2+\sqrt{13})^2\mu_2+(2-\sqrt{13})^2\mu_3},$ then, given $$X=x_1(\sqrt{13}W_2+2Z_2)+x_2(W_1+Z_3)+x_3(W_3+Z_1)+x_4(W_1-Z_3)+x_5(W_3-Z_1)\in\mathfrak{m}_{\{\alpha_2\}}$$ we have
\begin{align*}
    A(\sqrt{13}W_2+2Z_2+W_1+Z_3)=&\sqrt{13}\mu_1x_1W_2+2\mu_1x_1Z_2+\mu_2x_2(W_1+Z_3)\\
    &+\mu_2x_3(W_3+Z_1)+\mu_3x_4(W_1-Z_3)+\mu_3x_5(W_3-Z_1).
\end{align*}
For $\lambda\in\mathbb{R}$ we can compute \eqref{auxiliar:2} as follows:
\begin{align*}
    &[\lambda\left(\sqrt{13}Z_2-2W_2\right)+X,AX]\\
    =&\lambda[\sqrt{13}Z_2-2W_2,AX]+[X,AX]\\
    =&\lambda\{\sqrt{13}\mu_2x_2[Z_2,W_1]+\sqrt{13}\mu_2x_2[Z_2,Z_3]+\sqrt{13}\mu_2x_3[Z_2,W_3]+\sqrt{13}\mu_2x_3[Z_2,Z_1]\\
    &+\sqrt{13}\mu_3x_4[Z_2,W_1]-\sqrt{13}\mu_3x_4[Z_2,Z_3]+\sqrt{13}\mu_3x_5[Z_2,W_3]-\sqrt{13}\mu_3x_5[Z_2,Z_1]\\
    &-2\mu_2x_2[W_2,W_1]-2\mu_2x_2[W_2,Z_3]-2\mu_2x_3[W_2,W_3]-2\mu_2x_3[W_2,Z_1]\\
    &-2\mu_3x_4[W_2,W_1]+2\mu_3x_4[W_2,Z_3]-2\mu_3x_5[W_2,W_3]+2\mu_3x_5[W_2,Z_1]\}\\
    &+\sqrt{13}\mu_2x_1x_2[W_2,W_1]+\sqrt{13}\mu_2x_1x_2[W_2,Z_3]+\sqrt{13}\mu_2x_1x_3[W_2,W_3]\\
    &+\sqrt{13}\mu_2x_1x_3[W_2,Z_1]+\sqrt{13}\mu_3x_1x_4[W_2,W_1]-\sqrt{13}\mu_3x_1x_4[W_2,Z_3]\\
    &+\sqrt{13}\mu_3x_1x_5[W_2,W_3]-\sqrt{13}\mu_3x_1x_5[W_2,Z_1]+2\mu_2x_1x_2[Z_2,W_1]\\
    &+2\mu_2x_1x_2[Z_2,Z_3]+2\mu_2x_1x_3[Z_2,W_3]+2\mu_2x_1x_3[Z_2,Z_1]+2\mu_3x_1x_4[Z_2,W_1]\\
    &-2\mu_3x_1x_4[Z_2,Z_3]+2\mu_3x_1x_5[Z_2,W_3]-2\mu_3x_1x_5[Z_2,Z_1]+\sqrt{13}\mu_1x_1x_2[W_1,W_2]\\
    &+2\mu_1x_1x_2[W_1,Z_2]+\mu_2x_2x_3[W_1,W_3]+\mu_2x_2x_3[W_1,Z_1]+\mu_3x_2x_5[W_1,W_3]\\
    &-\mu_3x_2x_5[W_1,Z_1]+\sqrt{13}\mu_1x_1x_2[Z_3,W_2]+2\mu_1x_1x_2[Z_3,Z_2]+\mu_2x_2x_3[Z_3,W_3]\\
    &+\mu_2x_2x_3[Z_3,Z_1]+\mu_3x_2x_5[Z_3,W_3]-\mu_3x_2x_5[Z_3,Z_1]+\sqrt{13}\mu_1x_1x_3[W_3,W_2]\\
    &+2\mu_1x_1x_3[W_3,Z_2]+\mu_2x_2x_3[W_3,W_1]+\mu_2x_2x_3[W_3,Z_3]+\mu_3x_3x_4[W_3,W_1]\\
    &-\mu_3x_3x_4[W_3,Z_3]+\sqrt{13}\mu_1x_1x_3[Z_1,W_2]+2\mu_1x_1x_3[Z_1,Z_2]+\mu_2x_2x_3[Z_1,W_1]\\
    &+\mu_2x_2x_3[Z_1,Z_3]+\mu_3x_3x_4[Z_1,W_1]-\mu_3x_3x_4[Z_1,Z_3]+\sqrt{13}\mu_1x_1x_4[W_1,W_2]\\
    &+2\mu_1x_1x_4[W_1,Z_2]+\mu_2x_3x_4[W_1,W_3]+\mu_2x_3x_4[W_1,Z_1]+\mu_3x_4x_5[W_1,W_3]\\
    &-\mu_3x_4x_5[W_1,Z_1]-\sqrt{13}\mu_1x_1x_4[Z_3,W_2]-2\mu_1x_1x_4[Z_3,Z_2]-\mu_2x_3x_4[Z_3,W_3]\\
    &-\mu_2x_3x_4[Z_3,Z_1]-\mu_3x_4x_5[Z_3,W_3]+\mu_3x_4x_5[Z_3,Z_1]+\sqrt{13}\mu_1x_1x_5[W_3,W_2]\\
    &+2\mu_1x_1x_5[W_3,Z_2]+\mu_2x_2x_5[W_3,W_1]+\mu_2x_2x_5[W_3,Z_3]+\mu_3x_4x_5[W_3,W_1]\\
    &-\mu_3x_4x_5[W_3,Z_3]-\sqrt{13}\mu_1x_1x_5[Z_1,W_2]-2\mu_1x_1x_5[Z_1,Z_2]-\mu_2x_2x_5[Z_1,W_1]\\
    &-\mu_2x_2x_5[Z_1,Z_3]-\mu_3x_4x_5[Z_1,W_1]+\mu_3x_4x_5[Z_1,Z_3]\\
    =&\frac{\lambda}{2}\left\{\left((\sqrt{13}+2)\mu_2x_2+(\sqrt{13}-2)\mu_3x_4\right)Z_1\right.\\
    &+\left((\sqrt{13}+2)\mu_2x_2-(\sqrt{13}-2)\mu_3x_4\right)W_3\\
    &-\left((\sqrt{13}+2)\mu_2x_3+(\sqrt{13}-2)\mu_3x_5\right)Z_3\\
    &\left.-\left((\sqrt{13}+2)\mu_2x_3-(\sqrt{13}-2)\mu_3x_5\right)W_1\right\}\\
    &+x_1\frac{(\sqrt{13}-2)x_2(\mu_1-\mu_2)-(\sqrt{13}+2)x_4(\mu_1-\mu_3)}{2}Z_1\\
    &+x_1\frac{(\sqrt{13}-2)x_2(\mu_1-\mu_2)+(\sqrt{13}+2)x_4(\mu_1-\mu_3)}{2}W_3\\
    &-x_1\frac{(\sqrt{13}-2)x_3(\mu_1-\mu_2)-(\sqrt{13}+2)x_5(\mu_1-\mu_3)}{2}Z_3\\
    &-x_1\frac{(\sqrt{13}-2)x_3(\mu_1-\mu_2)+(\sqrt{13}+2)x_5(\mu_1-\mu_3)}{2}W_1.
    \end{align*}
Now, since $$\mu_1=\frac{34\mu_2\mu_3}{(2+\sqrt{13})^2\mu_2+(2-\sqrt{13})^2\mu_3},$$
then $$\mu_1-\mu_2=-\frac{(2+\sqrt{13})^2(\mu_2-\mu_3)\mu_2}{(2+\sqrt{13})^2\mu_2+(2-\sqrt{13})^2\mu_3},$$ and $$\mu_1-\mu_3=\frac{(2-\sqrt{13})^2(\mu_2-\mu_3)\mu_3}{(2+\sqrt{13})^2\mu_2+(2-\sqrt{13})^2\mu_3}.$$ Hence, for $(i,j)\in\{(2,4),(3,5)\}$
\begin{align*}
    &\frac{(\sqrt{13}-2)x_i(\mu_1-\mu_2)\pm(\sqrt{13}+2)x_j(\mu_1-\mu_3)}{2}\\
    =&\frac{9(\mu_2-\mu_3)}{2((2+\sqrt{13})^2\mu_2+(2-\sqrt{13})^2\mu_3))}\left(-(\sqrt{13}+2)\mu_2x_i\pm(\sqrt{13}-2)\mu_3x_j\right).
\end{align*}
This implies that
\begin{align*}
    &[\lambda\left(\sqrt{13}Z_2-2W_2\right)+X,AX]\\
    =&\frac{1}{2}\left(\lambda-\frac{9x_1(\mu_2-\mu_3)}{(2+\sqrt{13})^2\mu_2+(2-\sqrt{13})^2\mu_3}\right)\left((\sqrt{13}+2)\mu_2x_2+(\sqrt{13}-2)\mu_3x_4\right)Z_1\\
    &+\frac{1}{2}\left(\lambda-\frac{9x_1(\mu_2-\mu_3)}{(2+\sqrt{13})^2\mu_2+(2-\sqrt{13})^2\mu_3}\right)\left((\sqrt{13}+2)\mu_2x_2-(\sqrt{13}-2)\mu_3x_4\right)W_3\\
    &-\frac{1}{2}\left(\lambda-\frac{9x_1(\mu_2-\mu_3)}{(2+\sqrt{13})^2\mu_2+(2-\sqrt{13})^2\mu_3}\right)\left((\sqrt{13}+2)\mu_2x_3+(\sqrt{13}-2)\mu_3x_5\right)Z_3\\
    &-\frac{1}{2}\left(\lambda-\frac{9x_1(\mu_2-\mu_3)}{(2+\sqrt{13})^2\mu_2+(2-\sqrt{13})^2\mu_3}\right)\left((\sqrt{13}+2)\mu_2x_3-(\sqrt{13}-2)\mu_3x_5\right)W_1.
\end{align*}
Therefore, $[\lambda\left(\sqrt{13}Z_2-2W_2\right)+X,AX]=0$ for $\lambda=\frac{9x_1(\mu_2-\mu_3)}{(2+\sqrt{13})^2\mu_2+(2-\sqrt{13})^2\mu_3}.$ This shows that $\langle\cdot,\cdot\rangle$ is a g.o. metric. The proof is complete.
\end{proof}

\subsection{Equigeodesics} Given a vector $X\in\mathfrak{g}$ and a linear subspace $\mathfrak{u}\subseteq\mathfrak{g},$ let $X_{\mathfrak{u}}$ denote the orthogonal projection of $X$ onto $\mathfrak{u}.$ 

\begin{definition}
Let $G$ be a compact Lie group, and $H$ a closed subgroup of $G.$ A vector $X\in\mathfrak{g}$ that is a geodesic vector for any $G$-invariant metric on $G/H$ is called an \textit{equigeodesic vector.}
\end{definition}

The following proposition establishes a characterization of equigeodesic vectors on a homogeneous space under certain conditions. 
\begin{proposition}[\cite{GGEquigeodesics}]\label{GG:proposition} Let $G$ be a compact Lie group, $H$ a closed subgroup of $G$, and $G/H$ a homogeneous space such that every irreducible $H$-submodule of multiplicity greater than one is orthogonal. Then $X\in\mathfrak{m}$ is an equigeodesic vector if and only if \begin{equation}\label{equigeodesic:formula}
    [X,T_i^j(X_{\mathfrak{m}_i})+T_j^i(X_{\mathfrak{m}_j})]_{\mathfrak{m}}=0,\ i,j=1,...,s,
\end{equation} 
 where $\{T_i^j:i,j\in\{1,...,s\}\}$ is a family of linear maps $T_i^j:\mathfrak{m}_i\rightarrow\mathfrak{m}_j$ satisfying the conditions $i)-iv)$ in Section \ref{section:2.1}.   
\end{proposition}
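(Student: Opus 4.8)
The plan is to reduce the assertion to a linear‑algebra fact about the finite‑dimensional space of admissible metric operators, on top of the classical geodesic lemma. First I would record the lemma in the form needed: for a metric operator $A$, a nonzero vector $X\in\mathfrak{m}$ is a geodesic vector of the corresponding invariant metric, i.e.\ the homogeneous curve $t\mapsto\exp(tX)H$ is a geodesic, if and only if $[X,AX]_{\mathfrak{m}}=0$. This follows from the standard characterization (see e.g.\ \cite{N}) that the curve is a geodesic exactly when $\langle[X,Y]_{\mathfrak{m}},X\rangle=0$ for all $Y\in\mathfrak{m}$: writing $\langle\cdot,\cdot\rangle=(A\,\cdot,\cdot)$ and using that $A$ is self‑adjoint, that $AX\in\mathfrak{m}$, and that $(\cdot,\cdot)$ is $\Ad(G)$‑invariant, one gets $\langle[X,Y]_{\mathfrak{m}},X\rangle=([X,Y],AX)=-(Y,[X,AX]_{\mathfrak{m}})$ for all $Y\in\mathfrak{m}$, so the condition is equivalent to $[X,AX]_{\mathfrak{m}}=0$. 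Hence $X\in\mathfrak{m}$ is an equigeodesic vector if and only if $[X,AX]_{\mathfrak{m}}=0$ for \emph{every} metric operator $A$.

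The second step uses that this condition is \emph{linear} in $A$. Let $\mathcal{V}$ be the real vector space of all operators on $\mathfrak{m}$ that are self‑adjoint with respect to $(\cdot,\cdot)|_{\mathfrak{m}\times\mathfrak{m}}$ and commute with $\Ad(h)|_{\mathfrak{m}}$ for all $h\in H$; by Section~\ref{section:2.1} the metric operators are exactly the positive‑definite elements of $\mathcal{V}$, and they span $\mathcal{V}$, since any $B\in\mathcal{V}$ can be written $B=(B+c\,\textnormal{Id})-c\,\textnormal{Id}$ with $c>0$ large enough that $B+c\,\textnormal{Id}$ is positive definite. For fixed $X$ the map $A\mapsto[X,AX]_{\mathfrak{m}}$ is $\mathbb{R}$‑linear on $\mathcal{V}$, so it vanishes on all metric operators if and only if it vanishes on all of $\mathcal{V}$, equivalently on any spanning set of $\mathcal{V}$.

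It remains to write down a convenient spanning set, and here the orthogonality hypothesis is used. Fix the $(\cdot,\cdot)$‑orthonormal bases $\mathcal{B}_1,\dots,\mathcal{B}_s$ and the family $\{T_i^j\}$ of Section~\ref{section:2.1}, normalized so that $T_i^j(\mathcal{B}_i)=\mathcal{B}_j$ whenever $\mathfrak{m}_i$ is equivalent to $\mathfrak{m}_j$; in the basis $\mathcal{B}=\mathcal{B}_1\cup\cdots\cup\mathcal{B}_s$ every element of $\mathcal{V}$ is a block matrix of the shape \eqref{positive:matrix} with $\mu_i$ arbitrary reals and $A_{ij}$ arbitrary equivariant maps $\mathfrak{m}_j\to\mathfrak{m}_i$. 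Since every isotropy summand of multiplicity greater than one is orthogonal, $\textnormal{Hom}_H(\mathfrak{m}_i,\mathfrak{m}_j)$ is one‑dimensional whenever $\mathfrak{m}_i$ is equivalent to $\mathfrak{m}_j$ (cf.\ Remark~\ref{remark:2.3}), so each off‑diagonal block $A_{ij}$ is a scalar multiple of the matrix of $T_i^j$ in these bases, which is $\textnormal{I}_{d_i}$. Therefore $\mathcal{V}$ is spanned by the $(\cdot,\cdot)$‑orthogonal projections $P_i\colon\mathfrak{m}\to\mathfrak{m}_i$ ($i=1,\dots,s$) together with, for each pair $i<j$ with $\mathfrak{m}_i$ equivalent to $\mathfrak{m}_j$, the operator $\widetilde{T}_{ij}$ acting as $T_i^j$ on $\mathfrak{m}_i$, as $T_j^i$ on $\mathfrak{m}_j$, and as $0$ on the remaining summands; $\widetilde{T}_{ij}$ is self‑adjoint because $(T_i^j)^{*}=(T_i^j)^{-1}=T_j^i$ by conditions $iii)$–$iv)$.

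Finally I would evaluate $[X,AX]_{\mathfrak{m}}=0$ on this spanning set. As $P_iX=X_{\mathfrak{m}_i}$, the condition $[X,P_iX]_{\mathfrak{m}}=0$ is precisely the $i=j$ instance of \eqref{equigeodesic:formula} (whose second bracket argument is then $2X_{\mathfrak{m}_i}$). As $\widetilde{T}_{ij}X=T_i^j(X_{\mathfrak{m}_i})+T_j^i(X_{\mathfrak{m}_j})$, the condition $[X,\widetilde{T}_{ij}X]_{\mathfrak{m}}=0$ is the $(i,j)$ instance of \eqref{equigeodesic:formula}; and for $i\neq j$ with $\mathfrak{m}_i$ not equivalent to $\mathfrak{m}_j$ one has $T_i^j=T_j^i=0$ by condition $ii)$, so that instance of \eqref{equigeodesic:formula} holds automatically. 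Collecting all cases shows that $X\in\mathfrak{m}$ is an equigeodesic vector if and only if \eqref{equigeodesic:formula} holds for all $i,j\in\{1,\dots,s\}$. The one genuinely delicate point is the third step: it is exactly the orthogonality hypothesis that forces the finitely many operators $P_i$, $\widetilde{T}_{ij}$ to already span $\mathcal{V}$ — without it, equivalent summands of complex or quaternionic type would contribute additional off‑diagonal equivariant maps and the characterization would need extra conditions. The remaining steps are formal.
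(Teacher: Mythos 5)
Your argument is correct: the geodesic lemma reduces equigeodesy of $X\in\mathfrak{m}$ to $[X,AX]_{\mathfrak{m}}=0$ for every metric operator $A$, linearity in $A$ lets you test only a spanning set of the space of self-adjoint equivariant operators, and the orthogonality hypothesis (via the real Schur lemma) is exactly what makes the projections $P_i$ together with the symmetrized intertwiners $\widetilde{T}_{ij}$ such a spanning set. Note that the paper itself does not prove this proposition — it imports it from \cite{GGEquigeodesics} — but your reconstruction is essentially the argument given there, and you correctly identify the only delicate point (one-dimensionality of $\textnormal{Hom}_H(\mathfrak{m}_i,\mathfrak{m}_j)$ for equivalent orthogonal summands, while the diagonal blocks are scalar for any type since a self-adjoint equivariant endomorphism of an irreducible module has invariant eigenspaces).
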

We may use Proposition \ref{GG:proposition} to characterize equigeodesic vectors on flags of $\mathfrak{g}_2.$
\begin{theorem}\label{equigeodesic:theorem} Let $\mathbb{F}_\Theta$ be a flag of $\mathfrak{g}_2.$
\begin{itemize}
    \item[$a)$] A vector $X\in\mathfrak{m}_\emptyset$ is equigeodesic if and only if $$X\in\Span\{W_1,Z_3\}\cup\Span\{W_2,Z_2\}\cup\Span\{W_3,Z_1\}.$$
    
    \item[$b)$] A vector $X\in\mathfrak{m}_{\{\alpha_1\}}$ is equigeodesic if and only if $X\in\Span\{Z_3\}$ or $$X\in\{w_2W_2+w_3W_3+z_1Z_1+z_2Z_2:w_2z_1+w_3z_2=0\}$$
    \item[$c)$] A vector $X\in\mathfrak{m}_{\{\alpha_2\}}$ is equigeodesic if and only if $$X\in\Span\{\sqrt{13}Z_2-2W_2\}\cup\Span\{W_1,W_3,Z_1,Z_3\}.$$
\end{itemize}
\end{theorem}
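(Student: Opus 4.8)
The strategy is to apply Proposition \ref{GG:proposition} directly in each of the three cases, using the explicit bracket relations \eqref{brackets:1} together with the decompositions and equivariant maps $T_i^j$ set up in the proof of Theorem \ref{metrics:theorem}. In each case one must first check the hypothesis of Proposition \ref{GG:proposition}: every irreducible submodule of multiplicity $>1$ is orthogonal. For $\Theta=\emptyset$ all submodules are one-dimensional, hence the condition holds trivially (a one-dimensional real module is automatically orthogonal). For $\Theta=\{\alpha_1\}$, the only repeated submodule is the two-dimensional one appearing with multiplicity two, and Remark \ref{remark:2.3} says precisely that it is orthogonal. For $\Theta=\{\alpha_2\}$ the submodules are pairwise inequivalent, so the hypothesis is vacuous. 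Thus in all cases equation \eqref{equigeodesic:formula} characterizes equigeodesic vectors.

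For part $a)$, write $X=\sum_{i=1}^{3}(w_iW_i+z_iZ_i)$ and recall that with the labelling of Theorem \ref{metrics:theorem} the equivalent pairs are $(\Span\{W_1\},\Span\{Z_3\})$, $(\Span\{W_2\},\Span\{Z_2\})$, $(\Span\{W_3\},\Span\{Z_1\})$ with the $T_i^j$ identifying the two generators. Since each $\mathfrak{m}_i$ is one-dimensional, the only nontrivial instances of \eqref{equigeodesic:formula} are $[X,W_i+Z_j]_{\mathfrak m}=0$ for the three equivalent pairs $(i,j)$, i.e. (up to scaling) $[X,w_1W_1+z_3Z_3]=0$, $[X,w_2W_2+z_2Z_2]=0$, $[X,w_3W_3+z_1Z_1]=0$, together with the trivial relations $[X,X_{\mathfrak m_i}]=0$. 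Expanding $[X,\,\cdot\,]$ via \eqref{brackets:1} and collecting coefficients of the six basis vectors yields a system of bilinear equations in the $w_i,z_i$; one then checks that its solution set is exactly the union of the three coordinate planes $\Span\{W_1,Z_3\}$, $\Span\{W_2,Z_2\}$, $\Span\{W_3,Z_1\}$. The bracket table \eqref{brackets:1} is quite symmetric, so the computation, while tedious, is mechanical: the ``mixed'' brackets such as $[W_1,W_2]$, $[W_1,Z_2]$, $[Z_1,Z_3]$, etc., produce the obstruction terms, and these vanish precisely when $X$ has nonzero components in only one of the three blocks.

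For part $b)$, use the basis $\mathcal{B}_{\{\alpha_1\}}=\{Z_3,W_2,W_3,Z_2,-Z_1\}$, the decomposition $\mathfrak m_{\{\alpha_1\}}=\Span\{Z_3\}\oplus\Span\{W_2,W_3\}\oplus\Span\{Z_2,-Z_1\}$, and the equivariant isomorphism $\tilde T$ with $\tilde T(W_2)=Z_2$, $\tilde T(W_3)=-Z_1$. Writing $X=z_3Z_3+w_2W_2+w_3W_3+z_2Z_2+z_1Z_1$, the relevant instances of \eqref{equigeodesic:formula} are: the trivial ones within each block, and the cross term between the two equivalent two-dimensional blocks, namely $[X,\tilde T(X_{\mathfrak m_2})+\tilde T^{-1}(X_{\mathfrak m_3})]_{\mathfrak m}=0$, where $X_{\mathfrak m_2}=w_2W_2+w_3W_3$ and $X_{\mathfrak m_3}=z_2Z_2-z_1(-Z_1)$; so $\tilde T(X_{\mathfrak m_2})=w_2Z_2-w_3Z_1$ and $\tilde T^{-1}(X_{\mathfrak m_3})=z_2W_2+z_1W_3$ (care with the sign convention on $-Z_1$). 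Expanding with \eqref{brackets:1} and also the brackets $[Z_3,\,\cdot\,]$ (coming from the block $\Span\{Z_3\}$, which contributes because $Z_3$ brackets nontrivially with the two-dimensional blocks), one collects coefficients and finds that, after the dust settles, the $Z_3$-component is unconstrained while the remaining equations reduce to the single condition $w_2z_1+w_3z_2=0$. Thus $X$ is equigeodesic iff either $X\in\Span\{Z_3\}$ (all four other coordinates zero) or $w_2z_1+w_3z_2=0$; this matches the stated answer. The small subtlety here is bookkeeping the basis $-Z_1$ versus $Z_1$ and making sure $\tilde T$ and $\tilde T^{-1}$ are applied with the right signs.

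For part $c)$, the submodules are pairwise inequivalent, so \eqref{equigeodesic:formula} reduces to $[X,X_{\mathfrak m_i}]_{\mathfrak m}=0$ for $i=1,2,3$, equivalently $[X_{\mathfrak m_i},X_{\mathfrak m_j}]_{\mathfrak m}=0$ for all $i\ne j$ (since $[X_{\mathfrak m_i},X_{\mathfrak m_i}]=0$ automatically). Decompose $X$ according to $\mathfrak m_{\{\alpha_2\}}=\Span\{\tfrac{\sqrt{13}W_2+2Z_2}{\sqrt{17}}\}\oplus\Span\{\tfrac{W_1+Z_3}{\sqrt2},\tfrac{W_3+Z_1}{\sqrt2}\}\oplus\Span\{\tfrac{W_1-Z_3}{\sqrt2},\tfrac{W_3-Z_1}{\sqrt2}\}$. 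Note $\Span\{\sqrt{13}W_2+2Z_2\}=\Span\{\sqrt{13}Z_2-2W_2\}^{\perp}$ inside $\Span\{W_2,Z_2\}$; and the condition $[X_{\mathfrak m_1},X_{\mathfrak m_2}]_{\mathfrak m}=[X_{\mathfrak m_1},X_{\mathfrak m_3}]_{\mathfrak m}=[X_{\mathfrak m_2},X_{\mathfrak m_3}]_{\mathfrak m}=0$ is exactly what we must solve. Using \eqref{brackets:1} one computes that $X_{\mathfrak m_1}$ brackets nontrivially with everything in $\mathfrak m_2\oplus\mathfrak m_3$ unless either $X_{\mathfrak m_1}=0$ or $X_{\mathfrak m_2}=X_{\mathfrak m_3}=0$; and similarly the $[X_{\mathfrak m_2},X_{\mathfrak m_3}]$ term vanishes (given $X_{\mathfrak m_1}=0$) for all choices, because $\mathfrak m_2\oplus\mathfrak m_3=\Span\{W_1,W_3,Z_1,Z_3\}$ is a subalgebra-complement on which the relevant brackets land back in $\mathfrak k_{\{\alpha_2\}}\oplus\mathfrak m_1$... one must verify this last point carefully. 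The conclusion is the dichotomy: either $X\in\Span\{\sqrt{13}Z_2-2W_2\}=\mathfrak m_1$ wait — $\mathfrak m_1=\Span\{\sqrt{13}W_2+2Z_2\}$, not $\Span\{\sqrt{13}Z_2-2W_2\}$; but $\Span\{\sqrt{13}Z_2-2W_2\}=\mathfrak k_{\{\alpha_2\}}$, and a vector in $\mathfrak k_{\{\alpha_2\}}$... hold on, that is not in $\mathfrak m_{\{\alpha_2\}}$ at all. Let me reconsider: the stated answer says $X\in\Span\{\sqrt{13}Z_2-2W_2\}\cup\Span\{W_1,W_3,Z_1,Z_3\}$, but $\sqrt{13}Z_2-2W_2$ spans $\mathfrak k_{\{\alpha_2\}}$, not a piece of $\mathfrak m$. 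I think there is a typo and it should read $\Span\{\sqrt{13}W_2+2Z_2\}$ (which is $\mathfrak m_1$); in any case the plan is to show the equigeodesic set is $\mathfrak m_1\cup(\mathfrak m_2\oplus\mathfrak m_3)$, i.e. $X$ is equigeodesic iff it lies entirely in the first isotropy block or entirely in the sum of the other two.

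**Main obstacle.** The hard part is purely computational: in each case one has a genuinely large bracket expansion (dozens of terms, especially in case $b)$ where the $Z_3$-block interacts with both two-dimensional blocks, and in verifying in case $c)$ that $\Span\{W_1,W_3,Z_1,Z_3\}$ is ``bracket-closed into $\mathfrak k\oplus\mathfrak m_1$''), and the challenge is to organize the collection of coefficients of the basis vectors cleanly enough to read off that the solution locus is exactly the claimed union of linear subspaces (resp. the quadric $w_2z_1+w_3z_2=0$ in case $b)$). No conceptual difficulty is expected beyond careful bookkeeping with the sign conventions in the bases $\mathcal B_\Theta$.
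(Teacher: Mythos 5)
Your overall route is the same as the paper's: verify the orthogonality hypothesis of Proposition \ref{GG:proposition} in each case (one-dimensional summands for $\Theta=\emptyset$, Remark \ref{remark:2.3} for $\Theta=\{\alpha_1\}$, vacuous for $\Theta=\{\alpha_2\}$) and then expand the equations \eqref{equigeodesic:formula} using \eqref{brackets:1}. Your observation in $c)$ that the first subspace in the statement should be $\Span\{\sqrt{13}W_2+2Z_2\}=(\mathfrak m_{\{\alpha_2\}})_1$ rather than $\Span\{\sqrt{13}Z_2-2W_2\}=\mathfrak k_{\{\alpha_2\}}$ is correct; the computation $x_1x_i=0$ forces exactly $(\mathfrak m_{\{\alpha_2\}})_1\cup\big((\mathfrak m_{\{\alpha_2\}})_2\oplus(\mathfrak m_{\{\alpha_2\}})_3\big)$.

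There is, however, a genuine error in your treatment of the diagonal instances of \eqref{equigeodesic:formula}. The case $i=j$ reads $[X,2T_i^i(X_{\mathfrak m_i})]_{\mathfrak m}=2[X,X_{\mathfrak m_i}]_{\mathfrak m}=0$, and this is \emph{not} trivial, because $X$ has components outside $\mathfrak m_i$. These diagonal equations carry most of the constraints: in case $a)$ the paper derives the bilinear relations precisely from $[X,w_iW_i]=[X,z_iZ_i]=0$, and in case $b)$ the equation $[X,w_2W_2+w_3W_3]_{\mathfrak m}=w_2z_3Z_1+w_3z_3Z_2-(w_2z_1+w_3z_2)Z_3$ is the \emph{only} source of the quadric condition $w_2z_1+w_3z_2=0$, while $[X,z_3Z_3]_{\mathfrak m}=z_1z_3W_2+z_2z_3W_3-w_2z_3Z_1-w_3z_3Z_2$ forces $z_3=0$ unless $w_2=w_3=z_1=z_2=0$. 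Consequently your asserted outcome for $b)$ (``the $Z_3$-component is unconstrained ... $X$ is equigeodesic iff $X\in\Span\{Z_3\}$ or $w_2z_1+w_3z_2=0$'') is false and does not match the theorem, whose second branch explicitly excludes a $Z_3$-component. Concretely, $X=Z_3+W_2$ satisfies your criterion but $[X,Z_3]_{\mathfrak m_{\{\alpha_1\}}}=[W_2,Z_3]=-\tfrac12 Z_1\neq0$, so it is not equigeodesic. (There is also a sign slip: with $X_{\mathfrak m_3}=z_2Z_2+z_1Z_1$ one has $\tilde T^{-1}(X_{\mathfrak m_3})=z_2W_2-z_1W_3$, not $z_2W_2+z_1W_3$.) Restoring the diagonal equations $2[X,X_{\mathfrak m_i}]_{\mathfrak m}=0$ in all three cases repairs the argument and yields exactly the paper's conclusions.
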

\begin{proof} Let $\Theta=\emptyset,$ and consider $\left(\mathfrak{m}_\emptyset\right)_i,\left(T_\emptyset\right)_i^j,\ i,j=1,2,3,4,5,6$ as in the proof of Theorem \ref{metrics:theorem}. For each $i\in\{1,2,3,4,5,6\},$ we have that  $\left(\mathfrak{m}_\emptyset\right)_i$ is an orthogonal $H$-module since it is one-dimensional, so we can apply Proposition \ref{GG:proposition}. For a given $$X=\sum\limits_{i=1}^3w_iW_i+\sum\limits_{i=1}^3z_iZ_i\in\mathfrak{m}_\emptyset$$ we have 
\begin{align*}
&(T_\emptyset)_1^2\left(X_{\left(\mathfrak{m}_\emptyset\right)_1}\right)=w_1Z_3,\ (T_\emptyset)_2^1\left(X_{\left(\mathfrak{m}_\emptyset\right)_2}\right)=z_3W_1,\\
&(T_\emptyset)_3^4\left(X_{\left(\mathfrak{m}_\emptyset\right)_3}\right)=w_2Z_2,\ (T_\emptyset)_4^3\left(X_{\left(\mathfrak{m}_\emptyset\right)_4}\right)=z_2W_2,\\
&(T_\emptyset)_5^6\left(X_{\left(\mathfrak{m}_\emptyset\right)_5}\right)=w_3Z_1,\ (T_\emptyset)_6^5\left(X_{\left(\mathfrak{m}_\emptyset\right)_6}\right)=z_1W_3.
\end{align*}
Therefore, $X$ is equigeodesic if and only if the following equations hold:
\begin{align}\label{auxiliar:equi:1}
\begin{split}
    &2[X,w_iW_i]=2[X,z_iZ_i]=0,\ i=1,2,3,\\
    &[X,w_1Z_3+z_3W_1]=[X,w_2Z_2+z_2W_2]=[X,w_3Z_1+z_1W_3]=0.
\end{split}
\end{align}
If $X$ is equigeodesic, then in particular, 
\begin{align*}
    &0=2[X,w_1W_1]=w_1w_3W_2-w_1w_2W_3+w_1z_2Z_1-w_1z_1Z_2,\\
    &0=2[X,w_2W_2]=-w_2w_3W_1+w_1w_2W_3+w_2z_3Z_1-w_2z_1Z_3,\\
    &0=2[X,w_3W_3]=w_2w_3W_1-w_1w_3W_2+w_3z_3Z_2-w_3z_2Z_3,\\
    &0=2[X,z_1Z_1]=-z_1z_2W_1-z_1z_3W_2+w_1z_1Z_2+w_2z_1Z_3,\\
    &0=2[X,z_2Z_2]=z_1z_2W_1-z_2z_3W_3-w_1z_2Z_1+w_3z_2Z_3,\\
    &0=2[X,z_3Z_3]=z_1z_3W_2+z_2z_3W_3-w_2z_3Z_1-w_3z_3Z_2.
\end{align*}
which implies that $w_i,z_i,\ i=1,2,3$ satisfy the following equations:
\begin{align*}
    \left\{\begin{array}{ll}
    w_1w_i=z_3z_i=w_1z_j=z_3w_j=0, &i\neq 1,\ j\neq 3,\\
    w_2w_i=z_2w_i=w_2z_i=z_2z_i=0, &i\neq 2,\\
    w_3w_i=z_1w_i=w_3z_j=z_1z_j=0, &i\neq 3,\ j\neq 1.
    \end{array}\right.
\end{align*}
From these equations, we can deduce the following statements:
\begin{itemize}
    \item If $w_1\neq 0$ or $z_3\neq 0$ then $w_2=w_3=z_1=z_2=0$ which implies $X\in\Span\{W_1,Z_3\}.$
    \item If $w_2\neq 0$ or $z_2\neq 0$ then $w_1=w_3=z_1=z_3=0$ which implies $X\in\Span\{W_2,Z_2\}.$
    \item If $w_3\neq 0$ or $z_1\neq 0$ then $w_1=w_2=z_2=z_3=0$ which implies $X\in\Span\{W_3,Z_1\}.$
\end{itemize}
Hence, $X\in\Span\{W_1,Z_3\}\cup\Span\{W_2,Z_2\}\cup\Span\{W_3,Z_1\}.$ To show that any $X\in\Span\{W_1,Z_3\}\cup\Span\{W_2,Z_2\}\cup\Span\{W_3,Z_1\}$ satisfies the equations \eqref{auxiliar:equi:1} is a straightforward calculation. This proves $a).$\\

For the proof of $b),$ consider $\Theta=\{\alpha_1\},$ $\left(\mathfrak{m}_{\{\alpha_1\}}\right)_i,$ and $\left(T_{\{\alpha_1\}}\right)_i^{j},$ $i,j=1,2,3$ as in the proof of Theorem \ref{metrics:theorem}. The submodules $\left(\mathfrak{m}_{\{\alpha_1\}}\right)_2$ and $\left(\mathfrak{m}_{\{\alpha_1\}}\right)_3$ are orthogonal (see Remark \ref{remark:2.3}). Therefore, due to Proposition \ref{GG:proposition}, a vector $X=w_2W_2+w_3W_3+\sum\limits_{i=1}^3z_iZ_i\in\mathfrak{m}_{\{\alpha_1\}}$ is equigeodesic if and only if 
\begin{align*}\label{auxiliar:equi:2}
        &2[X,z_3Z_3]_{\mathfrak{m}_{\{\alpha_1\}}}=2[X,w_2W_2+w_3W_3]_{\mathfrak{m}_{\{\alpha_1\}}}=2[X,z_1Z_1+z_2Z_2]_{\mathfrak{m}_{\{\alpha_1\}}}=0,\\
        &[X,z_2W_2-z_1W_3-w_3Z_1+w_2Z_2]_{\mathfrak{m}_{\{\alpha_1\}}}=0.
\end{align*}
Now, let us evaluate each of these expressions:
\begin{align*}
    &2[X,z_3Z_3]_{\mathfrak{m}_{\{\alpha_1\}}}=z_1z_3W_2+z_2z_3W_3-w_2z_3Z_1-w_3z_3Z_2,\\
    &2[X,w_2W_2+w_3W_3]_{\mathfrak{m}_{\{\alpha_1\}}}=w_2z_3Z_1+w_3z_3Z_2-(w_2z_1+w_3z_2)Z_3,\\
    &2[X,z_1Z_1+z_2Z_2]_{\mathfrak{m}_{\{\alpha_1\}}}=-z_1z_2W_2-z_2z_3W_3+(w_2z_1+w_3z_2)Z_3,\\
    &[X,z_2W_2-z_1W_3-w_3Z_1+w_2Z_2]_{\mathfrak{m}_{\{\alpha_1\}}}=w_3z_3W_2-w_2Z_3W_3+z_2z_3Z_1-z_1z_3Z_2.
\end{align*}
Therefore, $X$ is equigeodesic if and only if the following system of equations is satisfied
\begin{equation*}
    \left\{\begin{array}{ll}
    z_3w_i=z_3z_j=0,&i\neq 1,\ j\neq 3,\\
    w_2z_1+w_3z_2=0,\end{array}\right.
\end{equation*}
or, equivalently, $w_2=w_3=z_1=z_2=0,$ (in which case $X\in\Span\{Z_3\}$) or $z_3=0,\ w_2z_1+w_3z_2=0.$ This completes the proof of $b).$\\

For the proof of $c),$ as before we are going to consider $(\mathfrak{m}_{\{\alpha_2\}})_i,\ \left(T_{\{\alpha_2\}}\right)_i^j,\ i,j=1,2,3$ as in the proof of Theorem \ref{metrics:theorem}. In this case, the submodules $(\mathfrak{m}_{\{\alpha_2\}})_i$ are all inequivalent, i.e., all of them have multiplicity one, so the hypothesis of Proposition \ref{GG:proposition} hold. Given a vector $$X=x_1(\sqrt{13}W_2+2Z_2)+x_2(W_1+Z_3)+x_3(W_3+Z_1)+x_4(W_1-Z_3)+x_5(W_3-Z_1)\in\mathfrak{m}_{\{\alpha_2\}},$$ the equations \eqref{equigeodesic:formula} are equivalent to
\begin{align*}
    &2[X,x_1(\sqrt{13}W_2+2Z_2)]_{\mathfrak{m}_{\{\alpha_2\}}}=0,\\
    &2[X,x_2(W_1+Z_3)+x_3(W_3+Z_1)]_{\mathfrak{m}_{\{\alpha_2\}}}=0,\\
    &2[X,x_4(W_1-Z_3)+x_5(W_3-Z_1)]_{\mathfrak{m}_{\{\alpha_2\}}}=0.
\end{align*}
By computing these Lie brackets we obtain
\begin{align*}
    2[X,x_1(\sqrt{13}W_2+2Z_2)]_{\mathfrak{m}_{\{\alpha_2\}}}=&-x_1x_2(2-\sqrt{13})(W_3+Z_1)\\
    &+x_1x_3(2-\sqrt{13})(W_1+Z_3)\\
    &+x_1x_4(2+\sqrt{13})(W_3-Z_1)\\
    &-x_1x_5(2+\sqrt{13})(W_1-Z_3)\\
    2[X,x_2(W_1+Z_3)+x_3(W_3+Z_1)]_{\mathfrak{m}_{\{\alpha_2\}}}=&x_1x_2(2-\sqrt{13})(W_3+Z_1)\\
    &-x_1x_3(2-\sqrt{13})(W_1+Z_3)\\
    2[X,x_4(W_1-Z_3)+x_5(W_3-Z_1)]_{\mathfrak{m}_{\{\alpha_2\}}}=&-x_1x_4(2+\sqrt{13})(W_3-Z_1)\\
    &+x_1x_5(2+\sqrt{13})(W_1-Z_3).\\
\end{align*}
Hence, $X$ equigeodesic if and only if $x_1x_i=0,\ i=2,3,4,5;$ that is, $x_1=0,$ or $x_i=0,\ i=2,3,4,5.$ This proves that the space of equigeodesic vectors is \begin{align*}&\Span\{\sqrt{13}Z_2-2W_2\}\cup\Span\{W_1+Z_3,W_3+Z_1,W_1-Z_3,W_3-Z_1\}\\
=&\Span\{\sqrt{13}Z_2-2W_2\}\cup\Span\{W_1,W_3,Z_1,Z_3\}.\end{align*}
The proof is complete.
\end{proof}
\section{The homogeneous Ricci flow}\label{sec:Ricci}
The parametrization of the homogeneous metrics provided by Theorem \ref{metrics:theorem} allows us to deal with invariant geometry of real flag manifolds of type $\mathfrak{g}_2$ in a more practical way. In particular, the set of invariant metrics on the flag $\mathbb{F}_{\{\alpha_2\}}$ can be identified with the open subset $(\mathbb{R}^+)^3$  consisting of all the points in the Euclidean three-dimensional space with positive coordinates.   In this section, we will make a qualitative analysis of the homogeneous Ricci flow on the flag manifold $\mathbb{F}_{\{\alpha_2\}}.$ For this purpose, we recall the following well-known formula for the Ricci curvature on a reductive homogeneous space given in \cite[Corollary 7.38]{Besse}.

\begin{theorem} Let $G$ be a compact Lie group, $H$ a closed subgroup of $G$, and $\mathfrak{g}=\mathfrak{h}\oplus\mathfrak{m}$ a reductive decomposition that is orthogonal with respect to the Killing form $B$ of $\mathfrak{g}.$ For a $\Ad(H)$-invariant inner product $\langle\cdot,\cdot\rangle$ defined on $\mathfrak{m},$ the Ricci tensor associated with $\langle\cdot,\cdot\rangle$ satisfies the following equation:
\begin{align}\label{Ricci:curvature}
    \begin{split}
    \textnormal{Ric}(X,Y)=&-\frac{1}{2}\sum\limits_{i}\langle\left[X,v_i\right]_{\mathfrak{m}},\left[Y,v_i\right]_{\mathfrak{m}}\rangle-\frac{1}{2}B(X,Y)\\
    &+\frac{1}{4}\sum\limits_{i,j}\langle\left[v_i,v_j\right]_{\mathfrak{m}},X\rangle\hspace{0.05cm}\langle\left[v_i,v_j\right]_{\mathfrak{m}},Y\rangle-\langle U(X,Y), Z\rangle,\ X,Y\in\mathfrak{m},
    \end{split}
\end{align}
where $\{v_i\}$ is an $\langle\cdot,\cdot\rangle$-orthonormal basis of $\mathfrak{m},$ $Z=\sum\limits_{i}U(v_i,v_i),$ and $U:\mathfrak{m}\times\mathfrak{m}\rightarrow\mathfrak{m}$ is the linear map defined implicitly by the formula \begin{equation}\label{U:map}2\langle U(u,v),w\rangle=\langle[w,u]_{\mathfrak{m}},v\rangle+\langle[w,v]_{\mathfrak{m}},u\rangle,\ u,v,w\in\mathfrak{m}.\end{equation}    
\end{theorem}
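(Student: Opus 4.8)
The plan is to derive \eqref{Ricci:curvature} as the trace of the Riemannian curvature tensor of the $G$-invariant metric $g$ determined by $\langle\cdot,\cdot\rangle$, following the classical scheme for reductive homogeneous spaces (cf. \cite{Besse}). First I would recall the description of the Levi-Civita connection at $o=eH$ via its Nomizu operator: under the identification of $\mathfrak{m}$ with $T_o(G/H)$, one has $(\nabla_X Y)_o = \Lambda_X Y := \tfrac12[X,Y]_{\mathfrak{m}} + U(X,Y)$ for $X,Y\in\mathfrak{m}$, with $U$ the symmetric bilinear map \eqref{U:map}. This follows from the Koszul formula applied to the vector fields $X^*$ generated by elements $X\in\mathfrak{g}$: the Killing equation $\mathcal{L}_{X^*}g=0$, valid because $g$ is $G$-invariant, converts the metric-derivative terms of Koszul's formula into bracket terms, and then using $[X^*,Y^*]=-[X,Y]^*$ and evaluating at $o$, where $X^*(o)=X_{\mathfrak{m}}$ and $[X^*,Y^*](o)=-[X,Y]_{\mathfrak{m}}$, the expression reassembles into $\langle\tfrac12[X,Y]_{\mathfrak{m}}+U(X,Y),Z\rangle$, $Z\in\mathfrak{m}$, with the sign conventions of \cite{Besse}. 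The hypothesis that $\mathfrak{g}=\mathfrak{h}\oplus\mathfrak{m}$ is $B$-orthogonal ensures $\Ad(H)\mathfrak{m}=\mathfrak{m}$ and the compatibility of the $\mathfrak{m}$- and $\mathfrak{h}$-components of $\textnormal{ad}(X)$, $X\in\mathfrak{m}$, with $B$; I use this repeatedly below.

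Next I would write down the curvature. From $\Lambda$ and the reductive relations $[\mathfrak{h},\mathfrak{m}]\subseteq\mathfrak{m}$, $[\mathfrak{m},\mathfrak{m}]\subseteq\mathfrak{h}\oplus\mathfrak{m}$ one obtains Nomizu's formula for the curvature operator at $o$, which has the schematic shape
\[
R(X,Y)\;=\;[\Lambda_X,\Lambda_Y]\;+\;\Lambda_{[X,Y]_{\mathfrak{m}}}\;-\;\textnormal{ad}_{\mathfrak{m}}\big([X,Y]_{\mathfrak{h}}\big),\qquad X,Y\in\mathfrak{m},
\]
where $\textnormal{ad}_{\mathfrak{m}}(W):=[W,\cdot]\,|_{\mathfrak{m}}$ for $W\in\mathfrak{h}$ (the precise signs depend only on the curvature convention and are those of \cite{Besse}); the $\Ad(H)$-invariance of $\langle\cdot,\cdot\rangle$ makes each $\textnormal{ad}_{\mathfrak{m}}(W)$ skew-symmetric. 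The Ricci tensor is then the trace
\[
\textnormal{Ric}(X,Y)\;=\;\sum_{i}\langle R(v_i,X)Y,\,v_i\rangle
\]
over an $\langle\cdot,\cdot\rangle$-orthonormal basis $\{v_i\}$ of $\mathfrak{m}$, and one checks that this is symmetric in $X,Y$ and independent of the chosen basis.

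The heart of the proof is the term-by-term evaluation of this trace after substituting $\Lambda_X Y=\tfrac12[X,Y]_{\mathfrak{m}}+U(X,Y)$. The piece coming from $[\Lambda_{v_i},\Lambda_X]$ splits, using the defining relation \eqref{U:map} for $U$ and the skew-symmetry relations, into the summand $-\tfrac12\sum_i\langle[X,v_i]_{\mathfrak{m}},[Y,v_i]_{\mathfrak{m}}\rangle$ together with the mixed $U$-terms. The trace of $\Lambda_{[v_i,X]_{\mathfrak{m}}}$ produces, again through \eqref{U:map}, the quadratic structure-constant sum $\tfrac14\sum_{i,j}\langle[v_i,v_j]_{\mathfrak{m}},X\rangle\,\langle[v_i,v_j]_{\mathfrak{m}},Y\rangle$. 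The contributions involving the diagonal vectors $U(v_i,v_i)$ collapse into the mean-curvature vector $Z=\sum_i U(v_i,v_i)$ and yield the term $-\langle U(X,Y),Z\rangle$. Finally, the $\mathfrak{h}$-valued term $\textnormal{ad}_{\mathfrak{m}}([v_i,X]_{\mathfrak{h}})$, upon tracing, combines with the leftover $\mathfrak{m}$-part $\textnormal{ad}$-traces; using that $\textnormal{ad}(X)$ is $B$-skew and that $B(X,Y)$ is the trace of $\textnormal{ad}(X)\textnormal{ad}(Y)$ over all of $\mathfrak{g}=\mathfrak{h}\oplus\mathfrak{m}$, these reassemble into $-\tfrac12 B(X,Y)$, completing \eqref{Ricci:curvature}.

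The step I expect to be the main obstacle is precisely this last bookkeeping: correctly separating the $\mathfrak{m}$- and $\mathfrak{h}$-parts of the various $\textnormal{ad}$-traces, verifying that the $\mathfrak{m}$-only pieces together with the isotropy contribution recombine into the full Killing form with coefficient exactly $-\tfrac12$, and simultaneously keeping the $\tfrac14$ in the structure-constant term and the sign of the mean-curvature term consistent with the chosen curvature convention. The remaining points --- well-definedness and symmetry of $U$, symmetry of $\textnormal{Ric}$, and basis-independence of the trace --- are routine consequences of the $\Ad(H)$-invariance of $\langle\cdot,\cdot\rangle$ and the $B$-orthogonality of the decomposition, both of which are among the standing hypotheses of the statement.
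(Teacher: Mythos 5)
This theorem is not proved in the paper at all: it is quoted verbatim as a known result, with the proof delegated to \cite[Corollary 7.38]{Besse}. So there is no in-paper argument to compare against; your sketch has to stand on its own, and what you outline is indeed the classical route that Besse follows (Nomizu operator $\Lambda_X=\tfrac12[X,\cdot]_{\mathfrak m}+U(X,\cdot)$ from the Koszul formula, Nomizu's curvature formula, then tracing).

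That said, as a proof your text is a plan rather than an argument. The three substantive claims --- that tracing $[\Lambda_{v_i},\Lambda_X]$ yields $-\tfrac12\sum_i\langle[X,v_i]_{\mathfrak m},[Y,v_i]_{\mathfrak m}\rangle$ plus terms that collapse into $-\langle U(X,Y),Z\rangle$, that the trace of $\Lambda_{[v_i,\cdot]_{\mathfrak m}}$ produces the structure-constant sum with coefficient exactly $\tfrac14$, and above all that the isotropy term $\operatorname{ad}_{\mathfrak m}([v_i,X]_{\mathfrak h})$ together with the leftover $\mathfrak m$-parts reassembles into $-\tfrac12 B(X,Y)$ --- are each asserted with the phrase ``one checks'' or ``these reassemble into,'' and you explicitly defer the last one as ``the main obstacle.'' That last step is genuinely delicate: $B(X,Y)=\operatorname{tr}(\operatorname{ad}X\operatorname{ad}Y)$ is a trace over all of $\mathfrak g=\mathfrak h\oplus\mathfrak m$ computed against $B$, whereas your traces run over an $\langle\cdot,\cdot\rangle$-orthonormal basis of $\mathfrak m$ only, so one must use the $B$-orthogonality of the decomposition and the $\Ad(H)$-invariance of both forms to convert one into the other, and the factor $\tfrac12$ only emerges after a symmetrization of the $\mathfrak h$- and $\mathfrak m$-components of the double brackets. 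Until that bookkeeping is actually carried out (or the statement is simply cited, as the paper does), the proof is incomplete at precisely the point you identify.
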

\begin{corollary}\label{Ricci:corollary}
    Consider the flag manifold $\mathbb{F}_{\{\alpha_2\}}.$ Let $\langle\cdot,\cdot\rangle$ be an $\Ad(K_{\{\alpha_2\}})$-invariant inner product on $\mathfrak{m}_{\{\alpha_2\}},$ and $A$ its associated metric operator defined by positive numbers $\mu_1,\mu_2,\mu_3$ as in formula \eqref{metric:alpha_2}. The components of the Ricci tensor corresponding to $\langle\cdot,\cdot\rangle$ with respect to the basis $\mathcal{B}_{\{\alpha_2\}}$ are given by:
    \begin{align*}
    &\textnormal{Ric}_1=\frac{1}{544}\left\{\left(\frac{(\sqrt{13}-2)\mu_1}{\mu_2}\right)^2+\left(\frac{(\sqrt{13}+2)\mu_1}{\mu_3}\right)^2\right\},\\
    &\textnormal{Ric}_2=-\frac{1}{544}\left(\frac{(\sqrt{13}-2)^2\mu_1}{\mu_2}\right)+\frac{1}{2},\\
    &\textnormal{Ric}_2=-\frac{1}{544}\left(\frac{(\sqrt{13}+2)^2\mu_1}{\mu_3}\right)+\frac{1}{2}.
    \end{align*}
\end{corollary}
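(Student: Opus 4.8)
The plan is to apply the Ricci formula \eqref{Ricci:curvature} directly, with $G=K$ and $H=K_{\{\alpha_2\}}$. By Theorem \ref{metrics:theorem}(c) the metric operator $A$ associated with $\langle\cdot,\cdot\rangle$ is diagonal in the $(\cdot,\cdot)$-orthonormal basis $\mathcal{B}_{\{\alpha_2\}}=\{b_1,\dots,b_5\}$, where $b_1=\tfrac{1}{\sqrt{17}}(\sqrt{13}W_2+2Z_2)$ spans $(\mathfrak{m}_{\{\alpha_2\}})_1$, $b_2,b_3$ span $(\mathfrak{m}_{\{\alpha_2\}})_2$, and $b_4,b_5$ span $(\mathfrak{m}_{\{\alpha_2\}})_3$, with eigenvalues $\mu_1,\mu_2,\mu_2,\mu_3,\mu_3$. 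Hence $\{v_i\}=\{b_1/\sqrt{\mu_1},b_2/\sqrt{\mu_2},b_3/\sqrt{\mu_2},b_4/\sqrt{\mu_3},b_5/\sqrt{\mu_3}\}$ is a $\langle\cdot,\cdot\rangle$-orthonormal basis of $\mathfrak{m}_{\{\alpha_2\}}$, and since the decomposition $\mathfrak{m}_{\{\alpha_2\}}=(\mathfrak{m}_{\{\alpha_2\}})_1\oplus(\mathfrak{m}_{\{\alpha_2\}})_2\oplus(\mathfrak{m}_{\{\alpha_2\}})_3$ is orthogonal for both $\langle\cdot,\cdot\rangle$ and $(\cdot,\cdot)=-B$, the Ricci tensor is again diagonal in $\mathcal{B}_{\{\alpha_2\}}$; so it is enough to evaluate $\textnormal{Ric}(b_k,b_k)$ for one $b_k$ from each summand, the three values being $\textnormal{Ric}_1,\textnormal{Ric}_2,\textnormal{Ric}_3$ (the third displayed line of the statement should read $\textnormal{Ric}_3$).

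First I would tabulate the brackets $[b_i,b_j]$ and their $\mathfrak{m}_{\{\alpha_2\}}$-components. Every $[b_i,b_j]$ lies in $\mathfrak{k}$, and the projection onto $\mathfrak{m}_{\{\alpha_2\}}$ amounts to deleting the component along $\mathfrak{k}_{\{\alpha_2\}}=\Span\{\sqrt{13}Z_2-2W_2\}$; the only cases that arise reduce to $(Z_2-W_2)_{\mathfrak{m}_{\{\alpha_2\}}}=\tfrac{2-\sqrt{13}}{\sqrt{17}}b_1$ and $(W_2+Z_2)_{\mathfrak{m}_{\{\alpha_2\}}}=\tfrac{\sqrt{13}+2}{\sqrt{17}}b_1$. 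Using \eqref{brackets:1} one checks that almost all these brackets vanish: modulo the total antisymmetry of $(i,j,k)\mapsto([b_i,b_j],b_k)$, the only nonzero ones are $[b_1,b_2]\in\Span\{b_3\}$, $[b_1,b_3]\in\Span\{b_2\}$, $[b_1,b_4]\in\Span\{b_5\}$, $[b_1,b_5]\in\Span\{b_4\}$, and $[b_2,b_3]_{\mathfrak{m}},[b_4,b_5]_{\mathfrak{m}}\in\Span\{b_1\}$, the coefficients being proportional to $\sqrt{13}-2$ for the pair $b_2,b_3$ and to $\sqrt{13}+2$ for the pair $b_4,b_5$; in particular $[b_2,b_4]=[b_2,b_5]=[b_3,b_4]=[b_3,b_5]=0$. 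A substantial part of this bookkeeping already appears in the proof of Theorem \ref{equigeodesic:theorem}(c).

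Next I would dispose of the $U$-term in \eqref{Ricci:curvature}. Writing $Z=\sum_i U(v_i,v_i)$, the defining relation \eqref{U:map} gives $\langle Z,w\rangle=\sum_i\langle[w,v_i]_{\mathfrak{m}},v_i\rangle$ for all $w\in\mathfrak{m}_{\{\alpha_2\}}$; since each $v_i$ is a positive multiple of an element of $\mathcal{B}_{\{\alpha_2\}}$ and the decomposition is $(\cdot,\cdot)$-orthogonal, this sum equals $\sum_i c_i\,([w,b_i],b_i)$ with $c_i>0$, and each summand vanishes because $B$, hence $(\cdot,\cdot)=-B$, is $\mathrm{ad}$-invariant. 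Thus $Z=0$ and the last term of \eqref{Ricci:curvature} drops out; this is the familiar reason why diagonal metrics relative to a $B$-orthogonal reductive decomposition carry no $U$-contribution.

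Finally, I would substitute the structure constants above into \eqref{Ricci:curvature} with $X=Y=b_k$. For $b_k=b_1$ the term $-\tfrac12 B(b_1,b_1)=\tfrac12$ exactly cancels the first sum — which collapses to $-\tfrac12$, the $\mu$'s cancelling because $b_2,b_3$ share the eigenvalue $\mu_2$ and $b_4,b_5$ share $\mu_3$, and the remaining numerical factor being $\tfrac{(\sqrt{13}-2)^2+(\sqrt{13}+2)^2}{\,\cdot\,}=1$ — while the third sum produces the stated quadratic expression in $\mu_1/\mu_2$ and $\mu_1/\mu_3$; for $b_k$ in a two-dimensional summand the $\mu_k/\mu_1$-part of the first sum cancels the third sum, leaving $\tfrac12$ minus a multiple of $\mu_1/\mu_2$ (resp.\ $\mu_1/\mu_3$). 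The only real obstacle is the bulk of the computation — the $5\times5$ table of brackets together with the projection onto $\mathfrak{m}_{\{\alpha_2\}}$ — and the care needed in the final cancellations; conceptually everything is forced once the $U$-term is shown to vanish and one exploits the repeated eigenvalues, which is exactly what makes the $\mu$-dependence collapse onto the two ratios $\mu_1/\mu_2$ and $\mu_1/\mu_3$.
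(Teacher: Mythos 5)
Your proposal is correct and follows essentially the same route as the paper: same $\langle\cdot,\cdot\rangle$-orthonormal basis adapted to the isotropy summands, the same table of projected brackets, the vanishing of the $U$-contribution, and direct substitution into the Besse formula \eqref{Ricci:curvature} (the paper in fact observes $U\equiv 0$ outright, while you only need $Z=\sum_i U(v_i,v_i)=0$, which is a slightly leaner version of the same observation). Your remark that the third displayed component should read $\textnormal{Ric}_3$ is also right; the statement as printed contains that typo.
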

\begin{proof} For the $\Ad(K_{\{\alpha_2\}})$-invariant metric $\langle\cdot,\cdot\rangle$ determined by $\mu_1,\mu_2,\mu_3>0,$ an $\langle\cdot,\cdot\rangle$-orthonormal basis of $\mathfrak{m}_{\{\alpha_2\}}$ is given by the vectors
    
$$v_1=\frac{\sqrt{13}W_2+2Z_2}{\sqrt{17\mu_1}},\ v_2=\frac{W_1+Z_3}{\sqrt{2\mu_2}},\ v_3=\frac{W_3+Z_1}{\sqrt{2\mu_2}},\ v_4=\frac{W_1-Z_3}{\sqrt{2\mu_3}},\ v_5=\frac{W_3-Z_1}{\sqrt{2\mu_3}}.$$

Additionally, they satisfy the following relations:

\begin{equation}\label{brackets:1}
    \begin{array}{ll}
	[v_1,v_2]_{\mathfrak{m}_{\{\alpha_2\}}}=\frac{2-\sqrt{13}}{2\sqrt{17\mu_1}}v_3, & \textnormal{[}v_1,v_5\textnormal{]}_{\mathfrak{m}_{\{\alpha_2\}}}=\frac{2+\sqrt{13}}{2\sqrt{17\mu_1}}v_4,\\
	\\
	\textnormal{[}v_1,v_3\textnormal{]}_{\mathfrak{m}_{\{\alpha_2\}}}=-\frac{2-\sqrt{13}}{2\sqrt{17\mu_1}}v_2, & [v_2,v_3]_{\mathfrak{m}_{\{\alpha_2\}}}=\frac{(2-\sqrt{13})\sqrt{\mu_1}}{4\sqrt{17}\mu_2}v_1,\\
	\\
	\textnormal{[}v_1,v_4\textnormal{]}_{\mathfrak{m}_{\{\alpha_2\}}}=-\frac{2+\sqrt{13}}{2\sqrt{17\mu_1}}v_5, &  [v_4,v_5]_{\mathfrak{m}_{\{\alpha_2\}}}=-\frac{(2+\sqrt{13})\sqrt{\mu_1}}{4\sqrt{17}\mu_3}v_1.
	\end{array}
\end{equation}\\

Since $[v_i,v_j]$ is always $\langle\cdot,\cdot\rangle$-orthogonal to $v_i$ and $v_j,$ then, by the formula \eqref{U:map},  we have $U\equiv 0.$ Therefore, when applying the equation \eqref{Ricci:curvature} to $v_k,\ k=1,2,3,4,5,$ it simplifies to
\begin{align*}
    \textnormal{Ric}(v_k,v_k)=&-\frac{1}{2}\sum\limits_{i=1}^5\left|\left|[v_k,v_i]_{\mathfrak{m}_{\{\alpha_2\}}}\right|\right|^2-\frac{1}{2}B(v_k,v_k)+\frac{1}{2}\sum\limits_{1\leq i< j\leq 5}\langle[v_i,v_j]_{\mathfrak{m}_{\{\alpha_2\}}},v_k\rangle^2\\
    =&-\frac{1}{2}\sum\limits_{i=1}^5\left|\left|[v_k,v_i]_{\mathfrak{m}_{\{\alpha_2\}}}\right|\right|^2+\frac{1}{2}(v_k,v_k)+\frac{1}{2}\sum\limits_{1\leq i< j\leq 5}\langle[v_i,v_j]_{\mathfrak{m}_{\{\alpha_2\}}},v_k\rangle^2,
\end{align*}
where the norm $||\cdot||$ is taken with respect to $\langle\cdot,\cdot\rangle.$ Computing the formula above we obtain
\begin{align*}
    \textnormal{Ric}(v_1,v_1)=&-\frac{1}{2}\sum\limits_{i=1}^5\left|\left|[v_1,v_i]_{\mathfrak{m}_{\{\alpha_2\}}}\right|\right|^2+\frac{1}{2}(v_1,v_1)+\frac{1}{2}\sum\limits_{1\leq i< j\leq 5}\langle[v_i,v_j]_{\mathfrak{m}_{\{\alpha_2\}}},v_1\rangle^2\\
    =&-\left\{\left(\frac{2-\sqrt{13}}{2\sqrt{17\mu_1}}\right)^2+\left(\frac{2+\sqrt{13}}{2\sqrt{17\mu_1}}\right)^2\right\}+\frac{1}{2\mu_1}\\
    &+\frac{1}{2}\left\{\left(\frac{(2-\sqrt{13})\sqrt{\mu_1}}{4\sqrt{17}\mu_2}\right)^2+\left(-\frac{(2+\sqrt{13})\sqrt{\mu_1}}{4\sqrt{17}\mu_3}\right)^2\right\}\\
    =&-\frac{34}{68\mu_1}+\frac{1}{2\mu_1}+\frac{1}{2}\left\{\frac{(2-\sqrt{13})^2\mu_1}{(16)(17)\mu_2^2}+\frac{(2+\sqrt{13})^2\mu_1}{(16)(17)\mu_3^2}\right\}\\
    =&\frac{\mu_1}{544}\left\{\left(\frac{2-\sqrt{13}}{\mu_2}\right)^2+\left(\frac{2+\sqrt{13}}{\mu_3}\right)^2\right\},\\
    \textnormal{Ric}(v_2,v_2)=&-\frac{1}{2}\sum\limits_{i=1}^5\left|\left|[v_2,v_i]_{\mathfrak{m}_{\{\alpha_2\}}}\right|\right|^2+\frac{1}{2}(v_2,v_2)+\frac{1}{2}\sum\limits_{1\leq i< j\leq 5}\langle[v_i,v_j]_{\mathfrak{m}_{\{\alpha_2\}}},v_2\rangle^2\\
    =&-\frac{1}{2}\left\{\left(\frac{2-\sqrt{13}}{2\sqrt{17\mu_1}}\right)^2+\left(\frac{(2-\sqrt{13})\sqrt{\mu_1}}{4\sqrt{17}\mu_2}\right)^2\right\}+\frac{1}{2\mu_2}\\
    &+\frac{1}{2}\left(-\frac{2-\sqrt{13}}{2\sqrt{17\mu_1}}\right)^2\\
    =&-\frac{\mu_1}{544}\left(\frac{2-\sqrt{13}}{\mu_2}\right)^2+\frac{1}{2\mu_2}=\textnormal{Ric}(v_3,v_3),\\
    \textnormal{Ric}(v_4,v_4)=&-\frac{1}{2}\sum\limits_{i=1}^5\left|\left|[v_4,v_i]_{\mathfrak{m}_{\{\alpha_2\}}}\right|\right|^2+\frac{1}{2}(v_4,v_4)+\frac{1}{2}\sum\limits_{1\leq i< j\leq 5}\langle[v_i,v_j]_{\mathfrak{m}_{\{\alpha_2\}}},v_4\rangle^2\\
    =&-\frac{1}{2}\left\{\left(-\frac{2+\sqrt{13}}{2\sqrt{17\mu_1}}\right)^2+\left(\frac{(2+\sqrt{13})\sqrt{\mu_1}}{4\sqrt{17}\mu_3}\right)^2\right\}+\frac{1}{2\mu_3}\\
    &+\frac{1}{2}\left(\frac{2+\sqrt{13}}{2\sqrt{17\mu_1}}\right)^2\\
    =&-\frac{\mu_1}{544}\left(\frac{2+\sqrt{13}}{\mu_3}\right)^2+\frac{1}{2\mu_3}=\textnormal{Ric}(v_5,v_5).
\end{align*}

The result follows from the fact that $\mathcal{B}_{\{\alpha_2\}}=\left\{\sqrt{\mu_1}v_1,\sqrt{\mu_2}v_2,\sqrt{\mu_2}v_3,\sqrt{\mu_3}v_4,\sqrt{\mu_3}v_5\right\},$ so \begin{align*}
    &\textnormal{Ric}_1=\textnormal{Ric}(\sqrt{\mu_1}v_1,\sqrt{\mu_1}v_1)=\mu_1\textnormal{Ric}(v_1,v_1)\\
    &\textnormal{Ric}_2=\textnormal{Ric}(\sqrt{\mu_2}v_k,\sqrt{\mu_2}v_k)=\mu_2\textnormal{Ric}(v_k,v_k),\ k=2,3,\\
    &\textnormal{Ric}_3=\textnormal{Ric}(\sqrt{\mu_3}v_k,\sqrt{\mu_3}v_k)=\mu_3\textnormal{Ric}(v_k,v_k),\ k=4,5.
\end{align*}
\end{proof}

\begin{theorem} The homogeneous Ricci flow 
	\begin{equation}\label{Ricci:flow}
	\displaystyle\frac{d\langle\cdot,\cdot\rangle}{dt}=-2\textnormal{Ric}
	\end{equation}
	on the flag manifold $\mathbb{F}_{\{\alpha_2\}}$ is equivalent to the autonomous system of ordinary differential equations 
    \begin{equation}\label{main_eq}
        \left\{\begin{array}{rcl}
        x'&=&\dfrac{x}{z}\left(-\dfrac{1}{2}x^2+\dfrac{1}{\sqrt{13}-2}x-\dfrac{1}{4}y^2\right)\vspace{0.3cm}\\
        y'&=&\dfrac{y}{z}\left(-\dfrac{1}{4}x^2+\dfrac{1}{\sqrt{13}+2}y-\dfrac{1}{2}y^2\right)\vspace{0.3cm}\\
        z'&=&-\dfrac{1}{4}(x^2+y^2)
    \end{array}\right.,\ x,y,z>0.
    \end{equation}
\end{theorem}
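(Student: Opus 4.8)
The plan is to rewrite the tensorial equation \eqref{Ricci:flow} as an ODE for the parameters $(\mu_1,\mu_2,\mu_3)$ of an invariant metric provided by Theorem \ref{metrics:theorem}$(c)$, make it explicit via Corollary \ref{Ricci:corollary}, and then normalize it by a change of variables.

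First, since the Ricci tensor of an invariant metric is again invariant, the right-hand side of \eqref{Ricci:flow} is tangent to the (finite-dimensional) set of invariant metrics; by uniqueness of solutions of ODEs, a solution with invariant initial datum stays invariant. Writing an invariant metric $\langle\cdot,\cdot\rangle$ in the $(\cdot,\cdot)$-orthonormal basis $\mathcal{B}_{\{\alpha_2\}}$ as $\operatorname{diag}(\mu_1,\mu_2,\mu_2,\mu_3,\mu_3)$, Corollary \ref{Ricci:corollary} identifies $\textnormal{Ric}$ with $\operatorname{diag}(\textnormal{Ric}_1,\textnormal{Ric}_2,\textnormal{Ric}_2,\textnormal{Ric}_3,\textnormal{Ric}_3)$, so \eqref{Ricci:flow} amounts to $\mu_i'=-2\,\textnormal{Ric}_i$, i.e. to the rational system on $(\mathbb{R}^+)^3$
\begin{align*}
\mu_1'&=-\frac{1}{272}\left[\left(\frac{(\sqrt{13}-2)\mu_1}{\mu_2}\right)^{2}+\left(\frac{(\sqrt{13}+2)\mu_1}{\mu_3}\right)^{2}\right],\\
\mu_2'&=\frac{(\sqrt{13}-2)^{2}\mu_1}{272\,\mu_2}-1,\qquad \mu_3'=\frac{(\sqrt{13}+2)^{2}\mu_1}{272\,\mu_3}-1 .
\end{align*}

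Next I would introduce the coordinates
$$x:=\frac{(\sqrt{13}-2)\mu_1}{68\,\mu_2},\qquad y:=\frac{(\sqrt{13}+2)\mu_1}{68\,\mu_3},\qquad z:=\frac{\mu_1}{68},$$
which, since $\sqrt{13}>2$, form a diffeomorphism of $(\mathbb{R}^+)^3$ onto itself with inverse $\mu_1=68z$, $\mu_2=(\sqrt{13}-2)z/x$, $\mu_3=(\sqrt{13}+2)z/y$. The point of this substitution is that the quantities $(\sqrt{13}-2)\mu_1/\mu_2$ and $(\sqrt{13}+2)\mu_1/\mu_3$ appearing in the Ricci components become $68x$ and $68y$, which is what renders the system polynomial modulo a single power of $z$.

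Finally, differentiating $x,y,z$ along the flow and substituting the expressions for $\mu_i'$ together with the inverse relations, one gets $\mu_1'=-\tfrac{68^{2}}{272}(x^{2}+y^{2})=-17(x^{2}+y^{2})$, hence $z'=-\tfrac14(x^{2}+y^{2})$; and, applying the quotient rule to $x=\tfrac{\sqrt{13}-2}{68}\cdot\tfrac{\mu_1}{\mu_2}$, after the powers of $\sqrt{13}\pm2$ cancel one is left with $x'=\tfrac{x}{z}\bigl(-\tfrac12x^{2}+\tfrac{1}{\sqrt{13}-2}x-\tfrac14y^{2}\bigr)$; the equation for $y$ follows in the same way, or by the symmetry interchanging $(\mu_2,\sqrt{13}-2,x)$ with $(\mu_3,\sqrt{13}+2,y)$. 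This is exactly \eqref{main_eq}, with the constraint $x,y,z>0$ being the image of $(\mathbb{R}^+)^3$, and no time reparametrization is required. The only step that is more than bookkeeping is this final simplification — tracking the constants $68,\ 272$ and the irrational factors $\sqrt{13}\pm2$ through the quotient rule and checking that everything but the factor $1/z$ cancels.
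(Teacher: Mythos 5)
Your proposal is correct and follows exactly the paper's route: reduce \eqref{Ricci:flow} to the system $\mu_i'=-2\,\textnormal{Ric}_i$ via Corollary \ref{Ricci:corollary} and then apply the change of variables $x=\tfrac{(\sqrt{13}-2)\mu_1}{68\mu_2}$, $y=\tfrac{(\sqrt{13}+2)\mu_1}{68\mu_3}$, $z=\tfrac{\mu_1}{68}$, which is precisely \eqref{Eq. relation xyz} in the paper. The only difference is that you carry out the quotient-rule bookkeeping explicitly (and your constants, e.g. $\mu_1'=-17(x^2+y^2)$ hence $z'=-\tfrac14(x^2+y^2)$, check out), whereas the paper simply states the substitution.
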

\begin{proof}
    By Corollary \ref{Ricci:corollary}, the Ricci flow \eqref{Ricci:flow} is equivalent to the system 
    \begin{equation*}
        \left\{\begin{array}{rcl}
        \mu_1'&=&-\dfrac{1}{272}\left\{\left(\dfrac{(\sqrt{13}-2)\mu_1}{\mu_2}\right)^2+\left(\dfrac{(\sqrt{13}+2)\mu_1}{\mu_3}\right)^2\right\},\vspace{0.3cm}\\
        \mu_2'&=&\dfrac{1}{272}\left(\dfrac{(\sqrt{13}-2)^2\mu_1}{\mu_2}\right)-1,\vspace{0.3cm}\\
        \mu_3'&=&\dfrac{1}{272}\left(\dfrac{(\sqrt{13}+2)^2\mu_1}{\mu_3}\right)-1,
        \end{array}\right.
    \end{equation*}
    with $\mu_1,\mu_2,\mu_3>0.$ By making the change of variables $(\mu_1,\mu_2,\mu_3)\mapsto (x,y,z)$ (which maps $\left(\mathbb{R}^+\right)^3$ onto itself) defined by
    \begin{equation}\label{Eq. relation xyz}x=\dfrac{(\sqrt{13}-2)\mu_1}{68\mu_2},\ y=\dfrac{(\sqrt{13}+2)\mu_1}{68\mu_3},\ z=\dfrac{\mu_1}{68},
    \end{equation}
    we obtain the result.
\end{proof}
In what follows, we will do a qualitative analysis of the system \eqref{main_eq}. First, let us set $\alpha:=\sqrt{13}-2,$ $\beta:=\sqrt{13}+2$. Performing the time rescaling $t = z\tau$, we obtain the following polynomial system
\begin{equation}\label{main_eq_1}
\left\{\begin{array}{rcl}
\dot{x}&=&x\left(-\dfrac{1}{2}x^2+\dfrac{1}{\alpha}x-\dfrac{1}{4}y^2\right),\vspace{0.3cm}\\
\dot{y}&=&y\left(-\dfrac{1}{4}x^2+\dfrac{1}{\beta}y-\dfrac{1}{2}y^2\right),\vspace{0.3cm}\\
\dot{z}&=&-\dfrac{z}{4}(x^2+y^2),
\end{array}\right.
\end{equation}
where the dot $\cdot$ represents the derivative of the functions $x(\tau)$, $y(\tau)$ and $z(\tau)$ with respect to the real variable $\tau$. Emphasize that systems \eqref{main_eq} and \eqref{main_eq_1} are topologically equivalent for $z>0.$ Furthermore, we will denote by $X=(P^1,P^2,P^3)$ the vector field associated to system \eqref{main_eq_1}. In addition, if $x,y,z\geq 0$, then the equilibrium points of system \eqref{main_eq_1} are given by $q_1=(2/\alpha,0,0)$, $q_2=(0,2/\beta,0),$ $q_3=(0.0521831,0.352931,0)$ and $q_4=(0,0,z),$ where $z>0$ and $q_4$ is a numerical solution of the equation $X\equiv0.$ This leads us to the first result of this section.

\begin{proposition}
    Let $\phi_t:\mathbb{R}^3\rightarrow\mathbb{R}^3$ be the flow associated with system \eqref{main_eq_1} with $x,y,z>0$. Then, $\phi_t$ has no equilibrium points in finite time, that is, there is no $q\in\mathbb{R}^3$ such that $\phi_t(q) = q$ for all $t$.
\end{proposition}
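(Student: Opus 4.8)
The plan is to reduce the statement to an observation about the sign of one coordinate of the vector field. An equilibrium point of the flow $\phi_t$ is precisely a point at which the vector field $X=(P^1,P^2,P^3)$ associated with \eqref{main_eq_1} vanishes, so it suffices to show that $X$ has no zeros in the open octant $\{x,y,z>0\}$. First I would isolate the third coordinate function, namely $P^3(x,y,z)=-\frac{z}{4}\left(x^2+y^2\right)$. For any point $q=(x_0,y_0,z_0)$ with $x_0,y_0,z_0>0$ one has $z_0>0$ and $x_0^2+y_0^2>0$, hence $P^3(q)=-\frac{z_0}{4}\left(x_0^2+y_0^2\right)<0$; in particular $P^3(q)\neq 0$, so $X(q)\neq 0$ and $q$ is not an equilibrium of \eqref{main_eq_1}. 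Since $q$ was an arbitrary point with positive coordinates, this proves the claim.

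An equivalent, slightly more dynamical way to present the same idea is via monotonicity: along any trajectory $\tau\mapsto(x(\tau),y(\tau),z(\tau))$ contained in the octant, the third equation of \eqref{main_eq_1} gives $\dot z=-\frac{z}{4}\left(x^2+y^2\right)<0$, so the $z$-coordinate is strictly decreasing; a constant trajectory $\phi_t(q)\equiv q$ would force $\dot z\equiv 0$, a contradiction. Either formulation yields the conclusion that there is no point $q$ with positive coordinates fixed by $\phi_t$ for all $t$.

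There is essentially no technical obstacle here; the result is immediate once the sign of $\dot z$ on the positive octant is noted. The only points worth stressing are that this argument excludes finite equilibria only in the interior $\{x,y,z>0\}$ — the equilibria $q_1,q_2,q_3,q_4$ of the extended polynomial system lie on the coordinate planes and hence outside the region of interest — and that the study of possible equilibria ``at infinity'' is deferred to the Poincar\'e compactification performed later in this section.
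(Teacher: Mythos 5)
Your argument is correct. An equilibrium of the flow $\phi_t$ is exactly a zero of the associated vector field $X=(P^1,P^2,P^3)$, and since $P^3(x,y,z)=-\tfrac{z}{4}(x^2+y^2)<0$ whenever $x,y,z>0$, the field cannot vanish in the open octant; the monotonicity phrasing ($\dot z<0$ along every trajectory in the region) is an equivalent and equally valid way to state this.

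The route differs slightly from the paper's. The paper obtains the proposition as a corollary of its preceding enumeration of \emph{all} equilibria of the polynomial system \eqref{main_eq_1} in the closed octant $x,y,z\geq 0$, namely $q_1=(2/\alpha,0,0)$, $q_2=(0,2/\beta,0)$, $q_3\approx(0.0522,0.3529,0)$ and the line $q_4=(0,0,z)$, and then observes that none of these lies in the interior. Your proof isolates the single inequality $\dot z<0$ and is therefore more self-contained: it does not rely on solving $X\equiv 0$ (in particular it avoids the numerical determination of $q_3$) and it immediately localizes the obstruction to the third equation. What the paper's longer computation buys, and what your one-line argument does not replace, is the list of boundary equilibria itself, which is needed for the subsequent local analysis of the dynamics near $q_1,\dots,q_4$; you correctly flag this by noting that those points lie on the coordinate planes and that behaviour at infinity is treated separately via the Poincar\'e compactification.
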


Although none of the equilibrium points are strictly in the first octant, the other points will help us understand the local dynamics of the given system in this region. 

	\begin{minipage}[t]{.55\textwidth}
\raggedright
\begin{figure}[H]
	\begin{center}
		\begin{overpic}[scale=0.43]{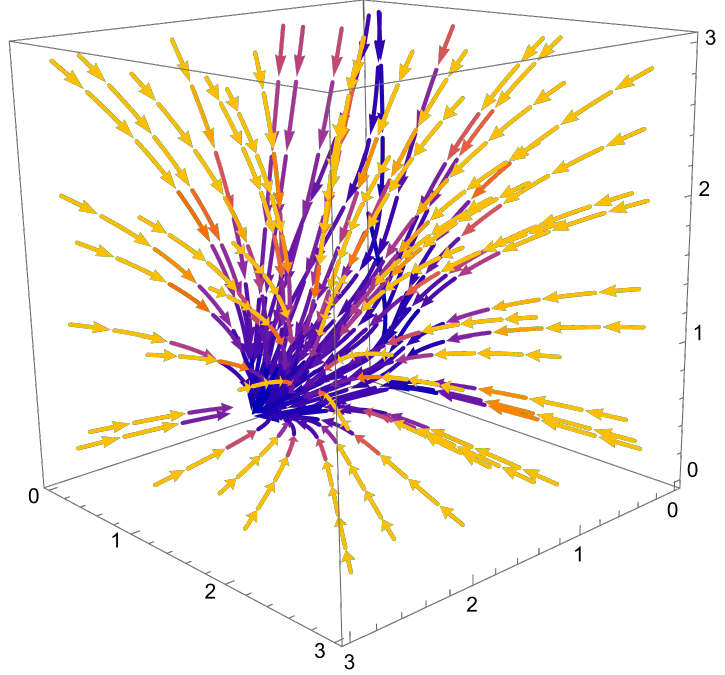}
         \put(99,55){$z$}
		\put(20,12){$y$}
        \put(73,11){$x$}
		\end{overpic}
		\caption{\footnotesize{Phase portrait of system \eqref{main_eq_1}.}}
	\label{FP3D}
	\end{center}
	\end{figure}
\end{minipage}
\begin{minipage}[t]{.55\textwidth}
\raggedright
\begin{figure}[H]
	\begin{center}
		\begin{overpic}[scale=0.42]{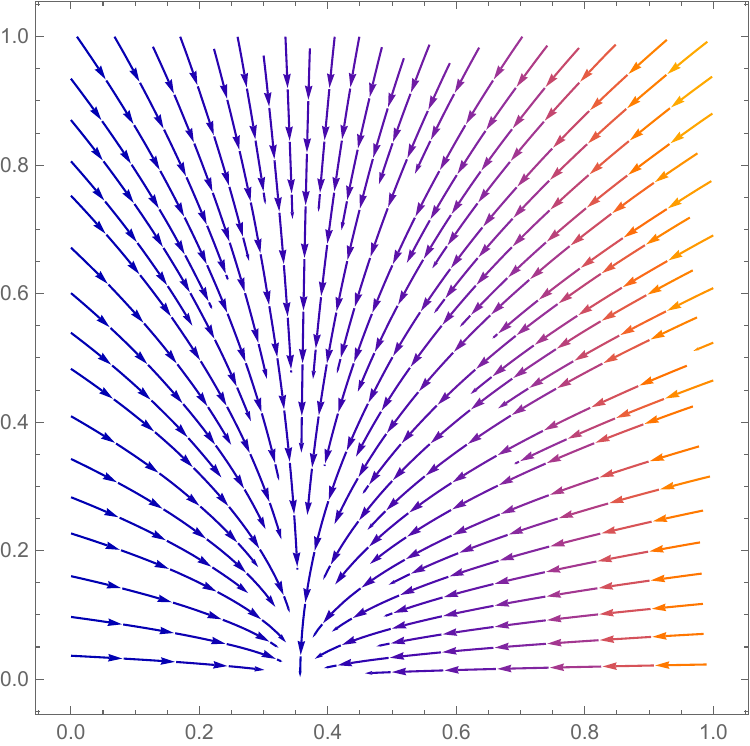}
        \put(1,102){$z$}
		\put(101,2){$y$}
		\end{overpic}
		\caption{\footnotesize{Projection of the phase portrait of system \eqref{main_eq_1} onto the $yz-$plane.}}
	\label{FPYZ}
	\end{center}
	\end{figure}
\end{minipage}
\subsection{Local dynamic}\label{sec_1}
Let us start by classifying all degree 2 invariant surfaces of system \eqref{main_eq_1}. We say that a real polynomial $f=f(x,y,z)$ in the variables $x, y$ and $z$ is a Darboux polynomial of system \eqref{main_eq_1} provided that $(\nabla f)\cdot X=kf,$ where $k=k(x,y,z)$ is a real polynomial of degree at most 2, called the cofactor of $f(x,y,z)$.  If $f(x,y,z)$ is a Darboux polynomial, then the surface $f(x,y,z)=0$ is an invariant algebraic surface, that is, if a orbit of system \eqref{main_eq_1} has a point on this surface, then it is completely contained in it.
\begin{proposition}\label{prop_alg}
    All the invariant algebraic surfaces $f(x, y, z)=0$ of degree 2 of system \eqref{main_eq_1} are given in the following table:
  \begin{table}[h]
	\begin{center}
		\begin{tabular}{| c ||c| c | c |}
			\hline
			 $f(x,y,z)$ & $k(x,y,z)$ \\
			\hline\hline
      $z^2$ & $-\dfrac{x^2}{2}-\dfrac{y^2}{2}$ \\
			\hline
      $x^2$ & $-x^2-\dfrac{y^2}{2}+\dfrac{2x}{\alpha}$ \\
			\hline
      $y^2$ & $-\dfrac{x^2}{2}-y^2+\dfrac{2y}{\beta}$ \\
			\hline
    $xy$ & $-\dfrac{3x^2}{4}-\dfrac{3y^2}{4}+\dfrac{x}{\alpha}+\dfrac{y}{\beta}$ \\
    	\hline
    $xz$ & $-\dfrac{3x^2}{4}-\dfrac{y^2}{2}+\dfrac{x}{\alpha}$ \\
    	\hline
    $yz$ & $-\dfrac{x^2}{2}-\dfrac{3y^2}{4}+\dfrac{y}{\beta}$ \\
			\hline
		\end{tabular}
	\end{center}
	\vspace{0.2cm} \caption{The invariant algebraic surfaces $f(x, y, z)=0$ of degree 2 of system \eqref{main_eq_1}.}\label{tb_2}
\end{table}
\end{proposition}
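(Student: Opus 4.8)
The easy half of the argument is to verify that the six polynomials in Table~\ref{tb_2} are Darboux polynomials; the real content is the converse. System~\eqref{main_eq_1} has the special form $\dot x=x g_1$, $\dot y=y g_2$, $\dot z=z g_3$, with $g_1=-\tfrac12 x^2+\tfrac1\alpha x-\tfrac14 y^2$, $g_2=-\tfrac14 x^2+\tfrac1\beta y-\tfrac12 y^2$, $g_3=-\tfrac14(x^2+y^2)$; hence each coordinate plane is invariant, i.e.\ $x$, $y$, $z$ are Darboux polynomials of degree one with cofactors $g_1$, $g_2$, $g_3$. It follows at once that $x^2,y^2,z^2,xy,xz,yz$ are Darboux polynomials with cofactors $2g_1,2g_2,2g_3,g_1{+}g_2,g_1{+}g_3,g_2{+}g_3$, which are exactly the entries of the table after putting $\alpha=\sqrt{13}-2$, $\beta=\sqrt{13}+2$.

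For the converse I would argue as follows. Let $f$ be a Darboux polynomial of degree exactly two with cofactor $k$ of degree at most two, and decompose into homogeneous parts $f=f_2+f_1+f_0$ (with $f_2\neq0$) and $k=k_2+k_1+k_0$. Writing $X=X_3+X_2$ with $X_3=(xh_1,yh_2,zh_3)$, where $h_i$ is the quadratic part of $g_i$, and $X_2=(\tfrac1\alpha x^2,\tfrac1\beta y^2,0)$, the identity $\nabla f\cdot X=kf$ splits by homogeneous degree into five equations, of degrees $4,3,2,1,0$. The top one, $\nabla f_2\cdot X_3=k_2 f_2$, involves only $f_2$ and $k_2$, so the plan is to classify its solutions first and then descend through the remaining four.

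To attack the degree-four equation, write $f_2=ax^2+by^2+cz^2+dxy+exz+myz$; one computes $\nabla f_2\cdot X_3=2ah_1x^2+2bh_2y^2+2ch_3z^2+d(h_1{+}h_2)xy+e(h_1{+}h_3)xz+m(h_2{+}h_3)yz$. First $k_2\neq0$, since $k_2=0$ would make this expression vanish, and the coefficients of $x^4,y^4,x^2z^2,x^3y,x^3z,y^3z$ then force $f_2=0$. With $k_2\neq0$, I would restrict the identity $\nabla f_2\cdot X_3=k_2 f_2$ to each of the three coordinate planes; on each plane it becomes a divisibility relation between binary quadratics, and peeling off coefficients one at a time shows that, up to a scalar, $f_2$ is either one of the six monomials of the table or one of the \emph{spurious} quadratics $(x\pm y)^2,\ x(x\pm y),\ y(x\pm y),\ (x+y)(x-y),\ (x\pm y)z$. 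These last ones enter because $x\pm y$ are themselves Darboux polynomials of the truncated system $\dot x=xh_1$, $\dot y=yh_2$ on $\{z=0\}$ — a feature of the leading part only, not of~\eqref{main_eq_1}.

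It then remains to feed each candidate $f_2$ into the degree-$3$, $2$, $1$ and $0$ equations and compare coefficients. For the six monomials this successively forces $f_1=0$ and then $f_0=0$ (the components of $k$ being pinned down along the way), so $f=f_2$ and $k$ is exactly the listed cofactor. For each spurious $f_2$ the descent instead runs into a contradiction: the degree-$3$ and degree-$2$ equations determine $f_1$ and $f_0$ uniquely and make $f_1$ (or $f_0$) nonzero, after which some coefficient in the degree-$2$ or degree-$1$ equation cannot be satisfied — and this is precisely where $\alpha\neq\beta$ is used, through $\tfrac1\alpha+\tfrac1\beta\neq0$ and $\tfrac1{\alpha^2}\neq\tfrac1{\beta^2}$. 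The main obstacle is therefore not conceptual but organizational: the degree-four equation has strictly more solutions than the six expected ones, so one cannot stop there, and the bulk of the proof is the bookkeeping of this four-equation cascade across all candidates, with the distinctness of the positive numbers $\alpha$ and $\beta$ as the only non-formal input that kills every spurious branch.
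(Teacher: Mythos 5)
Your proposal is correct in substance and, at bottom, performs the same reduction as the paper: both amount to comparing coefficients in the identity $(\nabla f)\cdot X=kf$ for a general quadratic $f$ and a cofactor $k$ of degree at most two. The paper does this in one shot and simply reports the outcome "after some tedious calculations"; you organize the same computation in two ways that are genuinely better. First, your forward direction is cleaner than direct verification: since $\dot x=xg_1$, $\dot y=yg_2$, $\dot z=zg_3$, the coordinates $x,y,z$ are degree-one Darboux polynomials with cofactors $g_1,g_2,g_3$, and multiplicativity of Darboux polynomials immediately yields the six table entries with cofactors $2g_1,2g_2,2g_3,g_1{+}g_2,g_1{+}g_3,g_2{+}g_3$, which indeed match the table. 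Second, your grading of the converse by homogeneous degree isolates the real subtlety, which the paper's one-line proof hides: the top-degree equation $\nabla f_2\cdot X_3=k_2f_2$ admits extra solutions because $x\pm y$ are Darboux polynomials of the cubic truncation $(xh_1,yh_2,zh_3)$ (one checks, e.g., that $xh_1+yh_2=(-\tfrac12x^2+\tfrac14xy-\tfrac12y^2)(x+y)$), so one cannot stop at the leading form and must run the descent through the degree-$3,2,1,0$ equations on each spurious branch. I checked one such branch explicitly: for $f_2=x^2-y^2$ the degree-$3$ and degree-$2$ equations are still solvable (forcing $f_1=-\tfrac{8}{5\alpha}x+\tfrac{8}{5\beta}y$ and $k_0=\tfrac{4}{25}(\alpha^{-2}+\beta^{-2})$), and the contradiction only appears in the degree-$1$ equation, which forces $k_0=0$; so your warning that the elimination happens low in the cascade is accurate, though the obstruction there is $\alpha^{-2}+\beta^{-2}\neq0$ rather than $\alpha\neq\beta$. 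The one caveat is that, exactly like the paper, you assert rather than execute the two decisive computations (the classification of the homogeneous top-degree solutions, and the failure of the descent on each of the nine spurious candidates); your text is therefore a better-structured outline of the same finite verification, not a shorter or more conceptual proof of it.
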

\begin{proof}
   Substituting
   $$f(z,y,z)=\displaystyle\sum_{i=0}^2\sum_{j=0}^{2-i}\sum_{l=0}^{2-i-j}a_{i,j,k}x^iy^jz^l \quad\text{and}\quad k(z,y,z)=\displaystyle\sum_{i=0}^2\sum_{j=0}^{2-i}\sum_{l=0}^{2-i-j}k_{i,j,k}x^iy^jz^l,$$
      into the equation $(\nabla f)\cdot X=kf$ and using that $X$ is the vector field associated with the system \eqref{main_eq_1}, we obtain, after some tedious calculations, Table \ref{tb_2}. Despite omitting these calculations, the reader can use the functions $f$ and $k$ given in Table \ref{tb_2} and verify that the equation $(\nabla f)\cdot X=kf$ is satisfied.
\end{proof}
From Proposition \ref{prop_alg} we can conclude that the only invariant algebraic surfaces are $x=0,$ $y=0$ and $z=0$. Furthermore, there are no invariant algebraic surfaces of degree 2 for $x,y,z>0.$
 	\begin{minipage}[t]{.55\textwidth}
\raggedright
\begin{figure}[H]
	\begin{center}
		\begin{overpic}[scale=0.43]{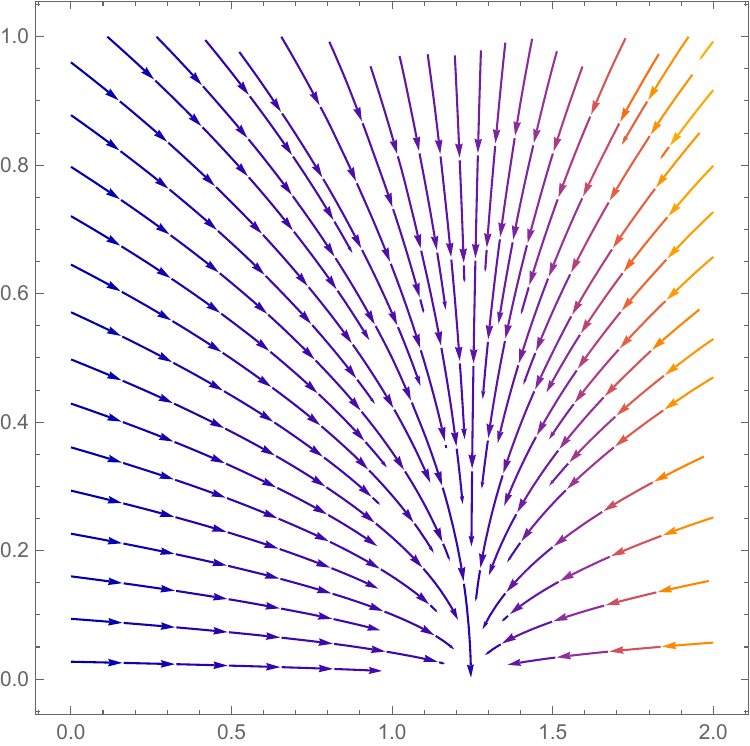}
         \put(1,102){$z$}
		\put(102,2){$x$}
		\end{overpic}
		\caption{\footnotesize{Projection of the phase portrait of system \eqref{main_eq_1} onto the $xz-$plane.}}
	\label{FPXZ}
	\end{center}
	\end{figure}
\end{minipage}
\begin{minipage}[t]{.55\textwidth}
\raggedright
\begin{figure}[H]
	\begin{center}
		\begin{overpic}[scale=0.42]{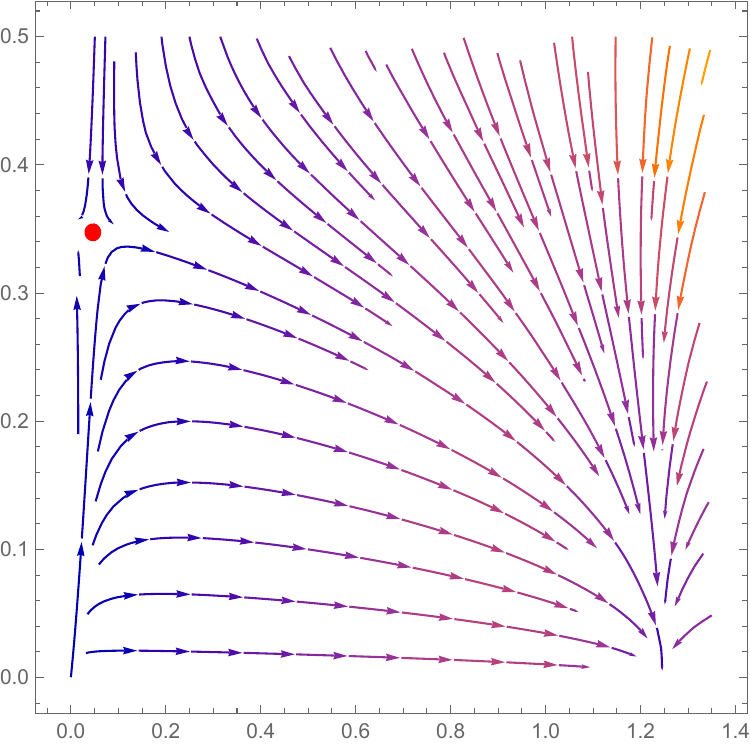}
        \put(1,102){$y$}
		\put(102,2){$x$}
		\end{overpic}
		\caption{\footnotesize{Projection of the phase portrait of system \eqref{main_eq_1} onto the $xy-$plane.}}
	\label{FPXY}
	\end{center}
	\end{figure}
\end{minipage}
\subsubsection{Dynamics around $q_1$}
Here, the Jacobian matrix associated with system \eqref{main_eq_1} at the point $q_1$ has eigenvalues 
$-2/\alpha^2,-1/\alpha^2$ and $-1/\alpha^2$ with corresponding eigenvectors $(1,0,0), (0,0,1)$ and $(0,1,0).$ Therefore, $q_1$ is an attractor point, see Figures \ref{FPXZ} and \ref{FPXY}.
\subsubsection{Dynamics around $q_2$}
Notice that the eigenvalues associated with system \eqref{main_eq_1} at the point $q_2$ are $-2/\beta^2,-1/\beta^2$ and $-1/\beta^2$ with corresponding eigenvectors $(0,1,0), (0,0,1)$ and $(1,0,0).$ Consequantly, $q_2$ is an attractor point, see Figure \ref{FPYZ}.
\subsubsection{Dynamics around $q_3$}
In this case, the eigenvalues associated with $q_3$ are $-0.0625182, -0.0318209$ and $0.0306973$ with eigenvectors $(0.0992779, 0.99506, 0), (0, 0, 1)$ and $(0.99506, -0.0992779, 0),$ respectively. This implies that $q_3$ is a saddle point, see Figure \ref{FPXY}. 
\subsubsection{Dynamics around $q_4$}
Recall that the Jacobian matrix of the vector field associated with the system \eqref{main_eq_1} at the equilibrium point $q_4=(0,0,z)$ is given by
$$\left(\begin{matrix}
0 & 0 & 0\\
0 & 0 & 0\\
0 & 0 & 0\\
\end{matrix}\right).$$
Hence, the equilibrium point $q_4$ has eigenvalues at with real part zero. These types of equilibrium points are known as \textit{nonhyperbolic equilibrium points}. In order to understand the dynamics of the system \eqref{main_eq_1} at $q_4=(0,0,z)$ we must apply the following Blow-up:

$$x=r\widetilde{x}\quad y=r\widetilde{y}\quad z=\widetilde{z}.$$
where $r\in\overline{\mathbb{R}^+}$ and $(\widetilde{x},\widetilde{y},\widetilde{z})\in\mathbb{S}^2$. Roughly speaking, the geometric idea of the blow-up method is to change the equilibrium point $q_4$ by a sphere $\mathbb{S}^2\subset\mathbb{R}^3$, leaving the dynamics away from the $q_4$ unchanged. This allow us to blow-up the dynamics around $q_4$.\\

Now, consider the following chart:
$$\kappa_1: \widetilde{y}=1: x=r_1x_1\quad y=r_1\quad z=z_1.$$

In this chart, the system \eqref{main_eq_1} can be written as:
\begin{equation}\label{main_eq_1_kappa1}
\left\{\begin{array}{rcl}
\dot{x_1}&=&x_1\left(\dfrac{r_1}{4}(1-x_1^2)-\dfrac{1}{\beta}+\dfrac{x_1}{\alpha}\right),\vspace{0.3cm}\\
\dot{r_1}&=&-r_1\left(\dfrac{1}{4}(2+x_1^2)r_1-\dfrac{1}{\beta}\right),\vspace{0.3cm}\\
\dot{z_1}&=&-\dfrac{r_1z_1}{4}(1+x_1^2),
\end{array}\right.
\end{equation}
after desingularization through the division by $r_1$ on the right-hand side. In addition, we denote by $F$ the vector field associated to system \eqref{main_eq_1_kappa1}. In the new coordinates $q_4$ is represented by the equilibrium points 
$$p^+=\left(\dfrac{\alpha}{\beta},0,z_1^*\right)\quad\text{and}\quad p^-=(0,0,z_1^*),$$
where $z_1^*\in\mathbb{R^+}$. Thus, the eigenvalues of the linear part of system \eqref{main_eq_1_kappa1} at the equilibrium point $p^+$ are
$\lambda^+_1=0,\lambda^+_2=1/\beta$ and $\lambda^+_3=1/\beta,$
and the ones of the equilibrium point $p^-$ are $\lambda^-_1=0,\quad \lambda^-_2=1/\beta$ and $\lambda^-_3=-1/\beta.$
This means that $p^\pm$ are nonhyperbolic equilibrium points. 
It is not difficult to find their corresponding eigenvectors $$v^+_1=(0,0,1)\quad\text{and}\quad v^+_2=(1,0,0)$$
and 
$$v^-_1=(0,0,1),\quad v^-_2=\left(0,-\dfrac{4}{z_1^*\beta},1\right) \quad\text{and}\quad v^-_3=(1,0,0),$$
respectively. Nonzero multiples of these eigenvectors are the only eigenvectors of system \eqref{main_eq_1_kappa1} at $p^+$ corresponding to $\lambda_1=0$ and $\lambda_2=\lambda_3=1/\beta$ respectively. Consequently, we must find one generalized eigenvector corresponding to $\lambda_3^+=1/\beta$  and independent of $v_2^+$. Solving the equation $(F-\lambda_3^+I)^2v_3^+=0,$ we get the following generalized eigenvector $$v_3^+=\left(0,1,-\frac{z_1^*(\alpha^2+\beta^2)}{4\beta}\right).$$  

From \textit{center manifold theorem} \cite{10.5555/102732}, we know that there exists an one-dimensional center manifold $W^c(p^\pm)$  tangent to the center subspace $E^c:z_1-$axis of \eqref{main_eq_1_kappa1} at $p^\pm$, there exists a one-dimensional (resp. 2-dimensional) unstable manifold $W^u(p^-)$ (resp $W^u(p^+)$) tangent to the unstable subspace 
$$E^u=\left\{(0,r_1,z_1):r_1=-\dfrac{4z_1}{z_1^*\beta}\right\}\quad \left(\text{resp.}\, E^u=span\left\{v_2^+,v_3^+\right\}\right)$$ of \eqref{main_eq_1_kappa1} at $p^-$ (resp. $p^+$) and there exists a one-dimensional stable manifold $W^s(p^-)$ tangent to the stable subspace $E^s:x_1-$axis of \eqref{main_eq_1_kappa1} at $p^-$. Even more, $W^c(p^\pm)$, $W^s(p^\pm)$ and $W^u(p^\pm)$ are invariant under the flow of \eqref{main_eq_1_kappa1}. The local dynamics of system \eqref{main_eq_1} can be seen in Figure \ref{FP3D}.\\

\subsection{Global dynamic}\label{sec_2}
To investigate the global dynamics of a polynomial differential system in the space $X$, we need to classify the local phase portraits of its finite and infinite equilibrium points on the Poincaré disk.\\

Let $\mathbb{S}^3=\{\mathbf{y}\in\mathbb{R}^4:||\mathbf{y}||=1\}$ be a sphere in $\mathbb{R}^3.$ From \cite{10.2307/2001320}, we know that $X$ induces a vector field in $\mathbb{S}^3,$ which we denote by $p(X)$. The vector field $p(X)$ allows us to study the behavior of $X$ in the neighborhood of infinity, i.e., in the neighborhood of the equator $\mathbb{S}^2=\{\mathbf{y}\in \mathbb{S}^3:y_4=0\}.$ To get the analytical expression for $p(X)$ we shall consider the sphere as a smooth manifold. In this context, it is enough to choose the 3 coordinate neighbourhoods given by $U_i=\{\mathbf{y}\in \mathbb{S}^3: y_i>0\}$, for $i=1, 2, 3$. Denote by $(z_1, z_2, z_3)$ the local coordinates on $U_i$ for $i=1,2,3.$ By \cite{10.2307/2001320}, the vector field $p(X)$ in $U_1$ becomes
 $$\frac{z_3^3}{\Delta(z)^2}(-z_1P^1+P^2,-z_2P^1+P^3,-z_3P^1),$$
where $P^i=P^i(1/z_3,z_1/z_3,z_2/z_3)$ and $\Delta(z)=(1+\sum_{i=1}^3z_i^2)^\frac{1}{2}$. Then, system \eqref{main_eq_1} in the chart $U_1$ is 
\begin{equation}\label{main_eq_1_k1}
\left\{\begin{array}{rcl}
\dot{z_1}&=&z_1\left(\dfrac{1-z_1^2}{4}-\dfrac{z_3}{\alpha}+\dfrac{z_1z_3}{\beta}\right),\vspace{0.3cm}\\
\dot{z_2}&=&z_2\left(\dfrac{1}{4}-\dfrac{z_3}{\alpha}\right),\vspace{0.3cm}\\
\dot{z_3}&=&\dfrac{z_3}{4}\left(2+z_1^2-\dfrac{4z_3}{\alpha}\right),
\end{array}\right.
\end{equation}
We have that in chart $U_1$ system \eqref{main_eq_1_k1} has three equilibrium points:
$$p_1=\left(1,0,0\right),\quad p_2=\left(-1,0,0\right)\quad\text{and}\quad p_3=(0,0,0),$$
The eigenvalues of the linear part of system \eqref{main_eq_1_k1} at the equilibrium points $p_1$ and $p_2$ are $-1/2,1/4$ and $3/4$ with corresponding eigenvectors $(1,0,0),$ $(0,1,0)$ and $(4(\alpha\mp\beta)/5\alpha\beta,0,1)$
and the ones of the origin are $1/4,1/4$ and $1/2$ with corresponding eigenvectors $(1,0,0),$ $(0,1,0)$ and $(0,0,1)$. This implies that the origin is a source and $p_1,p_2$ are saddle points.\\

Likewise, we know that the expression for $p(X)$ in $U_2$ is  given by
$$\frac{z_3^3}{\Delta(z)^2}(-z_1P^2+P^1,-z_2P^2+P^3,-z_3P^2),$$
where $P^i=P^i(z_1/z_3,1/z_3,z_2/z_3)$ and $\Delta(z)=(1+\sum_{i=1}^3z_i^2)^\frac{1}{2}.$ Then, system \eqref{main_eq_1} in the chart $U_2$ is 
\begin{equation}\label{main_eq_1_k2}
\left\{\begin{array}{rcl}
\dot{z_1}&=&z_1\left(\dfrac{1-z_1^2}{4}-\dfrac{z_3}{\beta}+\dfrac{z_1z_3}{\alpha}\right),\vspace{0.3cm}\\
\dot{z_2}&=&z_2\left(\dfrac{1}{4}-\dfrac{z_3}{\beta}\right),\vspace{0.3cm}\\
\dot{z_3}&=&\dfrac{z_3}{4}\left(2+z_1^2-\dfrac{4z_3}{\beta}\right),
\end{array}\right.
\end{equation}
We get that in chart $U_2$ system \eqref{main_eq_1_k2} has three equilibrium points:
$$p_1=\left(1,0,0\right),\quad p_2=\left(-1,0,0\right)\quad\text{and}\quad p_3=(0,0,0),$$
The eigenvalues of the linear part of system \eqref{main_eq_1_k2} at the equilibrium points $p_1$ and $p_2$ are $-1/2,1/4$ and $3/4$
with corresponding eigenvectors $(1,0,0),$ $(0,1,0)$ and $(\mp4(\alpha\mp\beta)/5\alpha\beta,0,1)$ and the ones of the equilibrium point $p_3$ are $1/4,1/4$ and $1/2$ with corresponding eigenvectors $(1,0,0),$ $(0,1,0)$ and $(0,0,1)$. This implies that the origin is a source and $p_1,p_2$ are saddle points.\\

Now, the expression for $p(X)$ in $U_3$ is given by
$$\frac{z_3^3}{\Delta(z)^2}(-z_1P^3+P^1,-z_2P^3+P^2,-z_3P^3),$$
where $P^i=P^i(z_1/z_3,z_2/z_3,1/z_3)$ and $\Delta(z)=(1+\sum_{i=1}^3z_i^2)^\frac{1}{2},$
Then, system \eqref{main_eq_1} in the chart $U_3$ is 
\begin{equation}\label{main_eq_1_k3}
\left\{\begin{array}{rcl}
\dot{z_1}&=&z_1^2\left(-\dfrac{z_1}{4}+\dfrac{z_3}{\alpha}\right),\vspace{0.3cm}\\
\dot{z_2}&=&-z_2^2\left(\dfrac{z_2}{4}-\dfrac{z_3}{\beta}\right),\vspace{0.3cm}\\
\dot{z_3}&=&\dfrac{z_3}{4}\left(z_1^2+z_2^2\right),
\end{array}\right.
\end{equation}
We get that in chart $U_3$ system \eqref{main_eq_1_k3} has the origin as a unique equilibrium point. In addition, the origin is a linearly zero equilibrium (i.e. the Jacobian matrix at $(0,0,0)$ is identically zero). From \textit{center manifold theorem} \cite{10.5555/102732}, we know that there exists an 3-dimensional center manifold $W^c(0,0,0)$  tangent to the center subspace $E^c=\mathbb{R}^3$ of \eqref{main_eq_1_k3} at the origin. Even more, $W^c(0,0,0)$, is invariant under the flow of \eqref{main_eq_1_k3}.\\


Let us denote by $\pi :\mathbb{R}^3\rightarrow\mathbb{R}^3$ the map $\pi(x_1, x_2, x_3) = \frac{1}{\Delta(x)}(x_1, x_2, x_3)$, which shrinks $\mathbb{R}^3$ to its unitary ball and takes the infinity to the sphere $\mathbb{S}^2$. \\

Now, denote $\eta_i$ the point in $\pi(\mathbb{R}^3)$ corresponding to $p_i$ for $i=1,2,3$. Using the information given in the charts $U_i$ for $i=1,2,3$ we have that there are no $\eta_j$ that are in the first octant. This completes the qualitative behavior of differential system \eqref{main_eq_1}.\\

It is worth pointing out that the previous analysis was conducted for the parameters $x, y, z$ defined in \eqref{Eq. relation xyz}. However, this is sufficient to understand and deduce the behavior and properties of the Ricci flow by considering the original parametrization $(\mu_1, \mu_2, \mu_3)$ of the invariant metrics. To illustrate this, we will prove the following proposition that concerns the long-time behavior of the solutions in their original parametrization.

\begin{proposition}
 Let  $t\mapsto\langle\cdot,\cdot\rangle_t$ be a solution of the homogeneous Ricci flow \eqref{Ricci:flow}, and $A_t$ the metric operator associated with $\langle\cdot,\cdot\rangle_t.$  Then $\langle\cdot,\cdot\rangle_t$ collapses over time.
\end{proposition}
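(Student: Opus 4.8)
The plan is to read ``$\langle\cdot,\cdot\rangle_t$ collapses over time'' as: $\mathrm{Vol}(\mathbb{F}_{\{\alpha_2\}},\langle\cdot,\cdot\rangle_t)\to 0$ as $t$ tends to the right endpoint $T\in(0,+\infty]$ of the maximal interval of existence of the solution (equivalently, $A_t$ acquires an eigenvalue tending to $0$). Writing $A_t$ in the form \eqref{metric:alpha_2} with parameters $\mu_1(t),\mu_2(t),\mu_3(t)$, one has $\mathrm{Vol}(\mathbb{F}_{\{\alpha_2\}},\langle\cdot,\cdot\rangle_t)=C\sqrt{\det A_t}=C\sqrt{\mu_1(t)\,\mu_2(t)^2\,\mu_3(t)^2}$, with $C>0$ the volume of $\mathbb{F}_{\{\alpha_2\}}$ in the background metric $(\cdot,\cdot)$. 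So it suffices to prove (i) $\mu_1(t)\to 0$ as $t\to T^-$ and (ii) $\sup_{[0,T)}\mu_i<\infty$ for $i=2,3$. By the computations preceding the statement, $(\mu_1,\mu_2,\mu_3)$ solves \eqref{main_eq}; through \eqref{Eq. relation xyz} this is the orbit $(x,y,z)(t)$ of \eqref{main_eq} inside the open octant $\{x,y,z>0\}$, which is positively invariant since $\{x=0\},\{y=0\},\{z=0\}$ are invariant surfaces (Proposition \ref{prop_alg}); recall $\mu_1=68z$, $\mu_2=\alpha z/x$, $\mu_3=\beta z/y$.

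The next step is to record a few a priori bounds. From the third line of \eqref{main_eq}, $z'=-\tfrac14(x^2+y^2)<0$, so $z$ — and hence $\mu_1=68z$ — is strictly decreasing on $[0,T)$ and converges to some $z_\infty\ge 0$. In the first line of \eqref{main_eq} the bracket $-\tfrac12x^2+\tfrac1\alpha x-\tfrac14y^2$ equals $x(\tfrac1\alpha-\tfrac x2)-\tfrac14y^2$, which is $<0$ as soon as $x\ge 2/\alpha$ (using $y>0$); since $x/z>0$, a barrier argument yields $x(t)\le x_{\max}:=\max\{x(0),2/\alpha\}$ throughout, and symmetrically $y(t)\le y_{\max}:=\max\{y(0),2/\beta\}$. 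Likewise the $\mu_2$-equation $\mu_2'=\tfrac1{272}\tfrac{\alpha^2\mu_1}{\mu_2}-1$ together with $\mu_1\le\mu_1(0)$ gives $\mu_2'\le 0$ whenever $\mu_2\ge\tfrac{\alpha^2\mu_1(0)}{272}$, hence $\mu_2(t)\le\bar\mu_2:=\max\{\mu_2(0),\tfrac{\alpha^2\mu_1(0)}{272}\}$ on $[0,T)$, and similarly $\mu_3(t)\le\bar\mu_3$. This is (ii).

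For (i), suppose toward a contradiction that $z_\infty>0$. If $T<\infty$: the bounds from (ii) give $x=\alpha z/\mu_2\ge\alpha z_\infty/\bar\mu_2>0$ and $y\ge\beta z_\infty/\bar\mu_3>0$, so with $x\le x_{\max}$, $y\le y_{\max}$ and $z_\infty\le z\le z(0)$ the orbit stays in a compact subset of the open octant; by the escape lemma for maximal solutions of a smooth ODE the solution would then extend past $T$, a contradiction. If $T=\infty$: integrating $z'=-\tfrac14(x^2+y^2)$ yields $\int_0^\infty(x^2+y^2)\,dt=4(z(0)-z_\infty)<\infty$, hence $\liminf_{t\to\infty}(x(t)^2+y(t)^2)=0$; taking $t_n\to\infty$ with $x(t_n)\to 0$ gives $\mu_2(t_n)=\alpha z(t_n)/x(t_n)\ge\alpha z_\infty/x(t_n)\to+\infty$, contradicting $\mu_2\le\bar\mu_2$. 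Therefore $z_\infty=0$, i.e. $\mu_1(t)\to 0$, which is (i); combining, $\mathrm{Vol}(\mathbb{F}_{\{\alpha_2\}},\langle\cdot,\cdot\rangle_t)\le C\,\bar\mu_2\bar\mu_3\,\sqrt{\mu_1(t)}\to 0$, the asserted collapse.

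The main obstacle is (i) — excluding a strictly positive limit for $\mu_1$ — and this is exactly where the a priori estimates of the second paragraph are indispensable: they prevent the orbit from running off the octant in any direction other than $z\to 0$ (finite-time case) and contradict the boundedness of $\mu_2$ (infinite-time case). One could instead quote the global analysis of Sections \ref{sec_1}–\ref{sec_2} (every finite and infinite equilibrium of \eqref{main_eq_1} lies on $\{z=0\}$ and the flow strictly decreases $z$), but since the statement concerns an arbitrary solution the self-contained route via monotonicity, barriers and the escape lemma is cleaner; and ``over time'' can be upgraded to ``in finite time'' by checking, in the spirit of the local analysis around $q_1=(2/\alpha,0,0)$, that $x^2+y^2$ eventually stays bounded away from $0$, although this refinement is not needed for the collapse statement.
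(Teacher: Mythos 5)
Your proof is correct, but it takes a genuinely different route from the paper's. The paper argues through its local phase-portrait analysis of the rescaled system \eqref{main_eq_1}: it notes that the equilibria $q_1,q_2,q_3$ all lie on $\{z=0\}$ and then, \emph{for an orbit lying in a neighbourhood of the attractors $q_1$ or $q_2$, or on the stable manifold of the saddle $q_3$}, reads off from the relations $\mu_1=68z$, $\mu_2=\alpha z/x$, $\mu_3=\beta z/y$ that the corresponding $\mu_i$ tend to $0$. That argument is conditional on the orbit actually converging to one of those equilibria, and it does not address orbits that might a priori accumulate on the nonhyperbolic line $q_4=(0,0,z)$, $z>0$, nor the passage between the rescaled time $\tau$ and the Ricci-flow time $t$. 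Your argument is self-contained and unconditional: the strict monotonicity $z'=-\tfrac14(x^2+y^2)<0$, the barriers $x\le\max\{x(0),2/\alpha\}$, $y\le\max\{y(0),2/\beta\}$ and $\mu_i\le\bar\mu_i$ for $i=2,3$ (which checks out: when $\mu_2\ge\alpha^2\mu_1(0)/272$ one has $\alpha^2\mu_1/(272\mu_2)\le\mu_1/\mu_1(0)\le1$, so $\mu_2'\le0$), and the escape-lemma/integrability dichotomy on $T<\infty$ versus $T=\infty$ together force $z\to0$, hence $\mu_1\to0$ with $\mu_2,\mu_3$ bounded, for \emph{every} solution in the octant. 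What the paper's route buys is a finer qualitative picture of \emph{how} the collapse occurs (which equilibrium is approached, hence which ratios of the $\mu_i$ survive in the limit); what yours buys is a complete proof of the proposition as stated, together with an explicit meaning of ``collapse'' ($\det A_t=\mu_1\mu_2^2\mu_3^2\to0$, i.e.\ the volume tends to zero) that the paper leaves implicit.
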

\begin{proof} Assume that $A_t$ is determined by $\mu_1(t),\mu_2(t),\mu_3(t)>0,$ for all $t$ and notice that  $\mu_1(t), \mu_2(t), \mu_3(t)$ satisfy the relations
  \begin{equation}\label{eq. mu}
    \mu_1(t)= 68 z(t),\quad \mu_2(t)=\frac{\alpha z(t)}{x(t)} \quad\text{and}\quad \mu_3(t)=\frac{\beta z(t)}{y(t)}, 
\end{equation}
with $\alpha=\sqrt{13}-2$ and $\beta=\sqrt{13}+2.$  Emphasize that there exist four equilibrium points denoted as $\{q_i\}_{i=1}^4$ of system \eqref{main_eq_1}. Further, consider $\mathfrak{R}=\{(x,y,z)\in \mathbb{R}^3: \quad x,y,z>0\}$ as the domain where this metric is defined. Within this domain, there are distinct neighborhoods $U_i$ containing $q_i$ for each $i$, such that $U_i \cap U_j = \varnothing$ for all $i\neq j$. 

In what follows, we study the behavior of the metric at each equilibrium point to understand its effects. For the attractor points $q_1$  and $q_2$ we have that any orbit $\gamma\equiv(\gamma_1,\gamma_2,\gamma_3)\subset U_i$ has as $\omega$-limit set the equilibrium point $q_i$. Thus, for $q_1=(2/\alpha,0,0)$, the curve $\gamma_1(t)\rightarrow 2/\alpha$ and $\gamma_j(t)\rightarrow0$ for $j=2,3$ as $t\rightarrow\infty$. Using the relations \eqref{eq. mu} we get that 
\begin{equation}\label{Eq. Mu flow}
    \mu_1(t)=68\gamma_3(t), \quad \mu_2(t)=\alpha \frac{\gamma_3(t)}{\gamma_1(t)}\quad\text{and}\quad\mu_3(t)=\beta\frac{\gamma_3(t)}{\gamma_2(t)} 
\end{equation}
  
concluding that $\mu_1(t), \mu_2(t) \rightarrow 0$ as $t\rightarrow \infty$, thus the metric is collapsing. For $q_2=(0,\frac{2}{\beta},0)$, the curve $\gamma_2(t)\rightarrow 2/\beta$ and $\gamma_j(t)\rightarrow 0$ for $j=1,3$ as $t\rightarrow \infty$. Therefore, using \eqref{Eq. Mu flow}, we have that $\mu_1(t),\mu_3(t)\rightarrow 0$ as $t\rightarrow\infty$, so the metric is collapsing.\\

The point $q_3=(0.0521831,0.352931,0)$ stands as a saddle point. Upon considering equations \eqref{Eq. Mu flow}, it is evident that this point is associated with a trivial metric and that in the unstable subspace $q_3$ behaves like an unstable equilibrium point. Moreover, since $q_3$ is a saddle point there exists a stable subspace in which all orbits converge to $q_3$, then let $\tilde{\gamma}\equiv(\tilde{\gamma_1},\tilde{\gamma_2},\tilde{\gamma_3})$ be a orbit in $U_3$ such that $\tilde{\gamma}(t)\rightarrow q_3$ as $t\rightarrow\infty$. Thus, $\tilde{\gamma}_3(t)\rightarrow 0$, which implies that $\mu_1(t),\mu_2(t),\mu_3(t)\rightarrow 0$ as $t\rightarrow \infty$, concluding that $\langle\cdot,\cdot\rangle_t$ is collapsing.
\end{proof}

\section*{Acknowledgements}
Brian Grajales is supported by São Paulo Research Foundation (FAPESP) grant 2023/04083-0. Gabriel Rondón is supported by São Paulo Research Foundation (FAPESP) grants 2020/06708-9 and 2022/12123-9. Julieth Saavedra is supported by the Instituto Serrapilheira.

\normalem
\bibliographystyle{abbrv}
\bibliography{references}
\end{document}